\theoremstyle{plain}\newtheorem{definition}{Definition}[section]
\theoremstyle{definition}\newtheorem{theorem}{Theorem}[section]
\theoremstyle{plain}\newtheorem{lemma}[theorem]{Lemma}
\theoremstyle{plain}\newtheorem{corollary}[theorem]{Corollary}
\theoremstyle{plain}\newtheorem{proposition}[theorem]{Proposition}
\theoremstyle{remark}\newtheorem{remark}{Remark}[section]
\newcommand{\norm}[1]{\left\|#1\right\|}
\DeclareMathOperator{\dtd}{\frac{d}{d{\emph t}}}
\newcommand{\intd }{\,{\rm d}}
\newcommand{\NN}{\mathbb{N}}
\newcommand{\RR}{\mathbb{R}}
\DeclareMathOperator{\Div}{div}
\DeclareMathOperator{\Avg}{Avg}
\begin{document}
\title[The two-dimensional Euler equations]{The two-dimensional Euler equation in Yudovich and $\mathrm{\textbf{bmo}}$-type spaces }
\author[Q. Chen, X. Miao and X. Zheng]{Qionglei Chen$^{1}$, Changxing Miao$^{2}$ and  Xiaoxin Zheng$^{3}$}
\address{$^1$ Institute of Applied Physics and Computational Mathematics,
        P.O. Box 8009, Beijing 100088, P.R. China.}

\email{chen\_{}qionglei@iapcm.ac.cn}

\address{$^2$ Institute of Applied Physics and Computational Mathematics,
        P.O. Box 8009, Beijing 100088, P.R. China.}

\email{miao\_{}changxing@iapcm.ac.cn}

\address{$^3$ School of Mathematics and Systems Science, Beihang University, Beijing 100191, P.R. China}

\email{xiaoxinzheng@buaa.edu.cn}
\date{\today}
\subjclass[2000]{76B03, 35Q35.}
\keywords{two-dimensional incompressible Euler equations; Yudovich type data; John-Nirenberg inequality; global existence and uniqueness
of solutions.}

\begin{abstract}

We construct global-in-time, unique solutions of the two-dimensional Euler equations in a Yudovich type space
 and a $\rm bmo$-type space.  First, we  show the  regularity of solutions for the
two-dimensional Euler equations in the Spanne space involving an unbounded
and non-decaying vorticity. Next, we establish an estimate with a
logarithmic loss of regularity for the transport equation in a
bmo-type space by developing classical analysis tool such as the
John-Nirenberg inequality.  We also  optimize estimates  of solutions to the vorticity-stream formulation of the
two-dimensional Euler equations with a bi-Lipschitz vector field in
bmo-type space by combining an observation introduced in \cite{Y1} by Yodovich with
the so-called ``quasi-conformal property" of the incompressible

\end{abstract}

\maketitle
\section{Introduction}\label{INTR}
\setcounter{section}{1}\setcounter{equation}{0}

The two-dimensional incompressible Euler equations have the following form
\begin{equation*}\label{eq.Euler}
\left\{\begin{array}{ll}
\partial_tu+(u\cdot\nabla) u+\nabla\Pi=0,\quad (t,x)\in\RR^+\times\RR^2,\\
\Div u=0,\\
u|_{t=0}=u_0.
\end{array}\right.
\tag{E}
\end{equation*}
Here, $u=(u_1,u_2)(t,x_1,x_2)$ denotes the velocity vector-field.  The scalar function $\Pi$ stands for the pressure which can be recovered  at least formally from $u$  via Calder\'on-Zygmund operators, namely,
\begin{equation}\label{eq-pressure}
\Pi=\sum^{2}_{i,\,j=1}\frac{\partial_{x_i}\partial_{x_j}}{-\Delta}(u_i\,u_j).
\end{equation}
Due to its physical importance, there are numerous studies on the two-dimensional incompressible Euler equations by many physicists and mathematicians.
In particular, local-in-time  smooth solutions with the large initial data were obtained in different type function spaces such as $H^s(\RR^2)$, $B^s_{p,q}(\RR^2)$ and $F^s_{p,q}(\RR^2)$, etc. Since the vorticity $\omega$ satisfies the free transport equation
\begin{equation}\label{eq.vor}
\partial_t\omega+(u\cdot\nabla)\omega=0,\quad \Div u=0,\quad \omega|_{t=0}=\omega_0,
\end{equation}
the incompressible condition guarantees that the quantity $\|\omega(t)\|_{L^\infty(\RR^2)}$ is conserved. This observation together with the Beale-Kato-Majda (abbr. B-K-M) criterion established in \cite{BKM}, and the logarithmic Sobolev inequality entail the global existence and uniqueness of smooth solutions for the general initial data in subcritical functional spaces such as $B^s_{p,q}(\RR^2)$ with $s>\frac2p+1$, see for example \cite{KP86,MAj-book,MWZ2012,Wol}. However, the B-K-M criterion does not work in the critical framework because the logarithmic Sobolev inequality is not available. In order to overcome this difficulty, Vishik~\cite{Vi98} established the following logarithmic estimate for the vorticity equation
\begin{equation}\label{eq.Vishik-log}
\|\omega(t)\|_{B^0_{\infty,1}(\RR^2)}\leq C\Big(1+\int_0^t\|\nabla u(\tau)\|_{L^\infty(\RR^2)}\intd\tau\Big)\|\omega_0\|_{B^0_{\infty,1}(\RR^2)},
\end{equation}
 (also was proved in \cite{HK08}) which allowed him to construct  global-in-time solutions in the critical Besov space $B^{\frac2p+1}_{p,1}(\RR^2)$.
 In addition, by making a proper use of the structure of the equations,
  Chemin \cite{Chemin} obtained  a notable existence and uniqueness result without any integrable conditions.
  One easily shows  that the above theory for the two-dimensional Euler equations can be obtained in the context of Lipschitz vector fields.
  It follows from the structure of the system that three fundamental properties of solutions: the global-in-time existence, the uniqueness,
   and the regularity persistence, hold true as far as we can guarantee $u$ to be Lipschitz continuous in the space variable $x$.
     However, such results are  much more complex for weak solution.
     Here, we recall  an existence result on a weak solution  from \cite{Dip87} corresponding to
      $\omega_0\in L^1(\RR^2)\cap L^p(\RR^2)$ with $p\in]2,\infty[$, where a  weak solution is defined in the following way:

 \begin{definition}\label{def-weak-solution}
A  function $u\in L^2_{\rm loc}(\RR^+\times\RR^2)$ is a weak solution of Problem \eqref{eq.Euler}  if the following two conditions holds true:

{\rm(1)}\; \label{chae-weak-1} for every
  $\phi=(\phi_1,\phi_2)\in C_0^\infty(\RR^+\times\RR^2)$ such that $\Div\phi=0$,
\begin{equation*}
\int_0^\infty\int_{\RR^2}\big(\phi_t\cdot u+\nabla\phi:(u\otimes u)\big)\intd x\intd t=0;
\end{equation*}

{\rm(2)}\;\label{chae-weak-2} $\int_0^\infty\int_{\RR^2}\nabla\psi\cdot u\intd x\intd t=0,\quad \text{for every }\,\psi\in C^\infty_0(\RR^+\times\RR^2),$

\noindent where $u\otimes u=(u_iu_j),\,\nabla\phi=(\partial_{x_{j}}\phi_i)$ and $A:B=\sum_{i,j=1}^2A_{ij}B_{ij}$.
\end{definition}
Giga,  Miyakawa and  Osada \cite{GMO88} established the global existence of the weak solution to Problem \eqref{eq.vor}
with $\omega_0\in L^p(\mathbb R^2)$, $p\in]2,\infty[$. Next,  Chae \cite{Ch} showed a global existence result for \eqref{eq.vor}  with
$\omega_0\in {L\log^+L}$ of compact support which can be viewed as a variation of a function from $L^1(\RR^2)$. The papers \cite{Del91} and \cite{FLX} were concerned with measure-valued solutions to the two-dimensional Euler equations.  Taniuchi \cite{Tan04} proved a global existence result for $(u_0,\omega_0)\in L^\infty\times{\rm bmo}$ by establishing a local uniformly $L^p(\RR^2)$ estimate for the vorticity and using the continuity argument, see also \cite{Tan10} for the case of  spatially almost periodic initial data.

Regularity persistence and uniqueness are also hot topics in the study of the two-dimensional Euler equations with non-Lipschitzian vector field.
 Yudovich \cite{Y-1995} proved a uniqueness of solution under the assumption that $\omega(t,x)\in Y^\Theta$ with $\Theta\in\mathcal{A}_2$ (see Definition \ref{def-yu} below).
Other interesting results on uniqueness can be found in \cite{Vi00,Ser-1}. As for the problem of propagation of regularities,  Vishik \cite{Vi99}  showed that $\omega(t,x)$ belongs to the class $B_{\Gamma_2}$ with $\Gamma_2(n)=n\Gamma_1(n)$ under the assumption  $\omega_0\in B_{\Gamma_1}$, where $B_{\Gamma_i}$ ($i=1,\,2$) are defined by
the relation
\begin{equation*}
\|f\|_{B_{\Gamma_i}}:=\sup_{n\geq2}\frac{1}{\Gamma_i(n)}\sum_{k=-1}^n\|\Delta_{k}f\|_{L^\infty}<\infty,
\end{equation*}
see \cite[Def.1.3]{Vi99} for more details. This implies  that the loss of regularity of $\omega(t,x)$ occurs, as time develops, in the borderline space $B_\Gamma$.
Therefore, the question whether the regularity of $\omega(t,x)$ is preserved or not in a borderline space which does not belong to the
Lipschitz class is a challenging issue.

  A study of the global well-posedness to the two-dimensional incompressible Euler equations with non-Lipschitzian vector field was initiated by V. Yudovich.  In his pioneering work  \cite{Y-1995}, a result of global well-posedness for  essentially  bounded vorticity was obtained.  A result very similar to \cite{Y-1995} was obtained independently in Philippe Serfati \cite{Ser-0},  Pertes de  r\'{e}gularit\'{e} le laplacien et l'\'{e}quation d'Euler sui $\mathbb R^n$, priprint.15,pp., 1994."    Subsequently, many works have been dedicated to the extension of this result to  more general spaces, see e.g. the work of Serfati \cite{Ser-2} on the global well-posedness of problem \eqref{eq.Euler} for  initial data $(u_0,\omega_0)\in L^\infty(\RR^2)\times L^\infty(\RR^2)$. More recently,  Bernicot and  Keraani \cite{B-K-2} considered the equivalent form of Problem \eqref{eq.Euler}, that is, the vorticity-stream equations:
  \begin{equation}\label{eq.vorticity-stream}
\left\{\begin{array}{ll}
\partial_t\omega+(u \cdot \nabla)\omega=0, \quad (t,x)\in\RR^+\times\RR^2,\\
u=K\ast\omega,\quad \hbox{with}\quad K(x)=\frac{x^\perp}{2\pi|x|^2},\\
\omega|_{t=0}=\omega_0.
\end{array}\right.
\end{equation}
They investigated  the global well-posedness of \eqref{eq.vorticity-stream} for an unbounded vorticity $\omega_0\in L^p\cap{\rm L^0mo_F}$  with $p\in[1,2[$. Their proof strongly relies on the preserving measure property of the flow and the Whitney covering theorem. Based on this result,  Bernicot and  Hmidi \cite{B-H13} further generalized this result and established the global well-posedness of Problem \eqref{eq.vorticity-stream} for $\omega_0$ belonging to $ L^p\cap{\rm L^\alpha mo_F} $ with $p\in[1,2[$ and $\alpha\in[0,1]$. We refer to \cite[Def.1]{B-H13} for more details on spaces $\rm L^\alpha mo_F$.

In this paper  we study  the global existence and uniqueness of  weak solutions of Problem  \eqref{eq.Euler}
in $\rm bmo$-type spaces, by establishing new \textit{a priori} estimate for the smooth solution in such spaces.
We begin by proving a  uniformly local $L^p$ estimate for
$\omega$ as in \cite{Tan04}. Our strategy is to obtain
a logarithmic estimate for $u$ as well as a global-in-time estimate
of $\|u(t)\|_{L^\infty}$. Unfortunately, it seems to be impossible to get a
similar result to that from \cite{Tan04} if we apply  the algorithm
used in \cite{Ser-1,Ser-2} directly. In order to overcome this
difficulty, we exploit a new estimate for the convective term  which allows us to get  for $\alpha\in]0,1[$
\begin{equation}\label{eq.log-u}
\|u(t)\|^{1+\alpha}_{L^\infty(\RR^2)}\leq C(t)\Phi\Big(1+\int_0^t\|u(\tau)\|^{1+\alpha}_{L^\infty(\RR^2)}\intd\tau\Big)\cdot\Big(1+\int_0^t\|u(\tau)\|^{1+\alpha}_{L^\infty(\RR^2)}\intd\tau\Big),
\end{equation}
where  $\Phi(\cdot)=(T\Theta)(\cdot)$ (see Definition \ref{def-take-log}, below ).
The above estimate yields the global bound for the quantities $\|u(t)\|_{L^\infty}$ and $\|\omega(t)\|_{Y^\Theta_{\rm ul}}$ with $\Theta\in\mathcal{A}_1$. This enables us to show the global existence and uniqueness of weak solutions to Problem \eqref{eq.Euler} in a large class involving unbounded and non-decaying vorticities.
We further obtain a global well-posedness  result of problem \eqref{eq.Euler} in the Spanne space. Next, we investigate the preservation of  the regularity of $\omega$ in the borderline space $\rm L^{\alpha}bmo$ with $\alpha\in[0,1]$.
 It is known that Bernicot and  Keraani \cite {B-K-1} obtained recently an optimal estimate
\begin{equation}\label{eq.optimal-BK}
\|\omega(t)\|_{\rm bmo}\leq\Big(1+\int_0^t\|\nabla u(\tau)\|_{L^\infty}\intd\tau\Big)\|\omega_0\|_{\rm bmo},
\end{equation}
which makes  impossible to preserve the regularity of $\omega$ in $\rm bmo$.
Inspired by \cite{Vi00},  we proceed with a study of  the regularity of $\omega(t,x)$ in  space $\rm L^\alpha bmo$ with $\alpha\in[0,1]$.
 By developing some tools from a   classical harmonic analysis such as the John-Nirenberg inequality, we obtain the following estimates with a logarithmic loss of regularity
  \begin{equation*}
  \|\omega(t)\|_{\rm L^{\alpha-1}bmo}\leq C(t)\|\omega_0\|_{\rm L^{\alpha}bmo},\quad \text{for}\quad \alpha\in[0,1[,
  \end{equation*}
  and
 \begin{equation*}
  \|\omega(t)\|_{\rm L_{\log}bmo}\leq C(t)\|\omega_0\|_{\rm Lbmo}.
  \end{equation*}
Based on this, we also establish the following estimates
\begin{align*}
 & \|\omega(t)\|_{\rm L^{\alpha}bmo}&\\
 \leq& C(t)\begin{cases} \Big(1+\int_0^t\|\nabla u\|_{L^\infty}\intd\tau\Big)\|\omega_0\|_{\rm L^{\alpha}bmo},\,&\text{for}\,\,\alpha\in[0,1[;\\
 \Big(1+\int_0^t\|\nabla u\|_{L^\infty}\intd\tau\Big)\log\Big(1+\int_0^t\|\nabla u\|_{L^\infty}\intd\tau\Big)\|\omega_0\|_{\rm L^{\alpha}bmo},\,&\text{for}\,\,\alpha=1;\\
   \Big(1+\int_0^t\|\nabla u\|_{L^\infty}\intd\tau\Big)^\alpha\|\omega_0\|_{\rm L^{\alpha}bmo},\,&\text{for}\,\,\alpha>1.
  \end{cases}
\end{align*}
These inequalities generalize the sharp estimate \eqref{eq.optimal-BK} and we
give a simple proof of \eqref{eq.optimal-BK}.
As a corollary, we obtain the global well-posedness  of Problem \eqref{eq.Euler} in the space $\rm L^\alpha bmo$ with $\alpha>1$.

\emph{Notation}: We will use the following notations. Let
$m\left(\Omega\right)$ denote  the Lebesgue measure of the set
$\Omega$. We put $B_r(x_0):=\{x\in\RR^d|\,|x-x_0|<r\}$ and $\lambda
B_r(x_0):=\{x\in\RR^d|\,|x-x_0|<\lambda r\}$ for any positive number
$\lambda$. $\Avg_{\Omega}(f)$ stands for
$\frac{1}{m\left(\Omega\right)}\int_{\Omega}f(y)\intd y$. We
define
\begin{align*}
&\|f\|_{p,\,\lambda}:=\sup_{x\in\mathbb{R}^d}\left(\|f\|_{L^p(B_\lambda(x))}\right)=\sup_{x\in\mathbb{R}^d}
\Big(\int_{|x-y|<\lambda}|f(y)|^{p}\intd y\Big)^{\frac{1}{p}},\\
&L^p_{\rm ul}(\RR^d):=\big\{f\in L^1_{\rm loc}(\RR^d);\,\|f\|_{p,\,1}<\infty\big\},\quad
\|f\|_{L^p_{\rm ul}(\RR^d)}:=\|f\|_{p,\,1}.
\end{align*}
\begin{definition}
Let $\alpha \in[0,1]$. The spaces  ${\rm LLog}^\alpha $ and ${\rm LogLog}^\alpha $ consist of bounded functions $f$ satisfying
\begin{equation*}
\|f\|_{{\rm LLog}^\alpha(\RR^d)}:=\sup_{\substack{x,\,x'\in\RR^d,\\0<|x-x'|<1}}\frac{|f(x)-f(x')|}{|x-x'|(e-\log|x-x'|)^\alpha}<\infty
\end{equation*}
and
\[\Big(\|f\|_{{\rm LogLog}^\alpha(\RR^d)}:=\sup_{\substack{x,\,x'\in\RR^d,\\0<|x-x'|<1}}\frac{|f(x)-f(x')|}{|x-x'|(e-\log|x-x'|)\log^\alpha(e-\log|x-x'|)}<\infty\Big)
\]
\end{definition}
respectively. Let us remark that the spaces ${\rm LLog}^0\,$ (abbr. $\rm Lip$) and ${\rm LLog}^1$ (abbr. $\rm LLog$) correspond to the Lipschitz and the
logLipschitz space, respectively.
In addition, the space  ${\rm LogLog}^0 $ corresponds to the LogLipschitz space,
  and we denote by ${\rm LogLog} $ the space ${\rm LogLog}^1$ for the sake of simplicity.

\begin{definition}[\cite{Taylor}]\label{BCD-Modulus}
A modulus of continuity is any nondecreasing nonzero continuous function $\phi$ on $[0,\infty[$ such that $\phi(0)=0$,
 and satisfying the so-called $\Delta_2$-condition: $\phi(2h)\leq C_\phi\cdot\phi(h)$ for every $ h\in]0,\infty[$.
 \end{definition}

 \begin{definition}[\cite{Y-1995}]\label{def-take-log}
Let $\phi(p)$ be the modulus of continuity such that $\phi(p)\geq1$ on $[p_0,\infty[$  with $p_0\geq1$.
The function $\Phi(a):=(T\phi)(a)$ on the positive axis $[0,\infty]$ is defined by
\begin{equation*}
\Phi(a)=\begin{cases}
\inf_{p_0\leq p<\infty}\big\{a^{\frac{1}{p}}\phi(p)\big\},&\quad\text{for}\quad a\geq1;\\
\inf_{p_0\leq p<\infty}\big\{\phi(p)\big\},&\quad\text{for}\quad a<1.
\end{cases}
\end{equation*}
 \end{definition}

 \begin{remark}
The definition of the function $\Phi(\cdot)$ does not depend on the choice of the index $p_0$ in
the sense of the germs at infinity, see \cite{Y-1995} for more explanations.
\end{remark}
 \begin{definition}
 Let $\phi$ be a modulus of continuity. We say that

{\rm(i)}\; $\phi$ belongs to the class $\mathcal{A}_1$ if the function $\phi (p)$
satisfies the following admissible condition
\begin{equation}\label{eq.converse-os}
\int^\infty_a\frac{1}{p\phi(\log p)}\intd p=\infty\quad \text{for some positive}\quad a\in]0,\infty[.
\end{equation}

{\rm(ii)}\; $\phi$ belongs to the class $\mathcal{A}_2$ if the function $\phi$ satisfies $\int_a^\infty\frac{1}{p\phi(p)}\intd p=\infty$ for some positive $a\in]0,\infty[$.
\end{definition}

\noindent\textbf{Examples:} The following functions $\phi: q\mapsto \phi(q)$ belong to the class $\mathcal{A}_1$:
\begin{equation*}
q\mapsto q^{\beta},\quad q\mapsto q\big(\log(1+q)\big)^{\beta},\quad \text{if}\quad \beta\in[0,1] \end{equation*}
and
\begin{equation*} q\mapsto q\log(1+q)  \Big(\log\big(1+\log(1+q)\big)\Big)^{\beta}\quad\text{if}\quad \beta\in[0,1].
\end{equation*}

\begin{definition}[Yudovich]\label{def-yu}
Let $\Theta (p)\geq1$ be a non-decaying positive function on $[1,\infty[$. We define
\vskip0.15cm

{\rm(1)}\; $Y^{\Theta}(\RR^d):=\{f\in\cap_{1\leq p<\infty}L^p(\RR^d);\,\|f\|_{Y^{\Theta}(\RR^d)}<\infty\}$, where $
\|f\|_{Y^{\Theta}(\RR^d)}:=\sup_{p\geq1}\frac{\|f\|_{L^p(\RR^d)}}{\Theta(p)}$.

\vskip0.15cm

{\rm(2)}\; $Y^\Theta_{\rm ul}(\mathbb{R}^d):=\big\{f\in\cap_{1\leq p<\infty}L^p_{\rm ul}(\mathbb{R}^d);\,\|f\|_{Y^{\Theta}_{\rm ul}(\RR^d)}<\infty\big\}$, where
$\|f\|_{Y^{\Theta}_{\rm ul}(\mathbb{R}^d)}:=\sup_{p\geq1}\frac{\|f\|_{L^p_{\rm ul}(\mathbb{R}^d)}}{\Theta(p)}$.

\vskip0.15cm

{\rm(3)}\;  $Y^\Theta_{\rm Lip}(\mathbb{R}^d):=\big\{f\in C(\RR^d)\cap L^\infty(\mathbb{R}^d);\,\|f\|_{Y^{\Theta}_{\rm Lip}(\RR^d)}<\infty\big\}$, where
$$\|f\|_{Y^{\Theta}_{\rm Lip}(\mathbb{R}^d)}:=\sup_{p\geq1}\frac{\|\nabla S_{p+1}f\|_{L^\infty(\mathbb{R}^d)}}{\Theta(p)},$$
and where the operator $S_{p+1}$ will be defined in Section \ref{Sec-PRE}.
\end{definition}
From the above definition, it is easy to check that
$\|f\|_{Y^{\Theta}_{\rm Lip}(\mathbb{R}^d)}\leq C\|\nabla f\|_{Y^{\Theta}_{\rm ul}(\mathbb{R}^d)}$ for any  non-decaying function $\Theta$.

First of all, let us recall the results about the global well-posedness of problem \eqref{eq.Euler} for Yudovich type initial data,
which were established in \cite{Tan04} by the continuity argument. For the sake of completeness,
we give another proof of
 this result in virtue of the localization technique.
\begin{theorem}[Taniuchi, \cite{Tan04}]\label{Theorem-Exi}
Let $u_0\in L^\infty(\RR^2)$ and $\omega_0\in Y^\Theta_{\rm ul}(\RR^2)$ with $\Theta\in\mathcal{A}_1$. Then  system \eqref{eq.Euler} admits at least a global solution $u$ such that
\begin{equation*}
u(t,x)\in C\big(\RR^+;L^\infty(\RR^2)\big)\quad\text{and}\quad\omega(t,x)\in L^\infty_{\rm loc}\big(\RR^+;Y^\Theta_{\rm ul}(\RR^2)\big).
\end{equation*}
If, moreover, $\Theta\in\mathcal{A}_2$, then the solution $u$ is unique.
\end{theorem}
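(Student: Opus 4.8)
\emph{Overall strategy.} The plan is a compactness scheme for the existence part, followed by a Yudovich-type energy argument for uniqueness. First I would regularize the data suitably (mollify and spatially truncate $u_0$, setting $\omega_0^n:=\partial_1u^n_{0,2}-\partial_2u^n_{0,1}\in C_0^\infty$) so that classical two-dimensional Euler theory furnishes global smooth solutions $u^n$ whose vorticity $\omega^n$ solves \eqref{eq.vor} and is transported by the volume-preserving flow $X^n$ of $u^n$; one checks $\|u^n_0\|_{L^\infty}+\|\omega^n_0\|_{Y^\Theta_{\rm ul}}\le C(\|u_0\|_{L^\infty}+\|\omega_0\|_{Y^\Theta_{\rm ul}})$ uniformly in $n$. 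Everything then reduces to two uniform-in-$n$ a priori bounds. The first is the vorticity bound: from $\omega^n(t,X^n(t,y))=\omega^n_0(y)$, incompressibility makes $X^n(t)$ measure-preserving, the displacement $|X^n(t,y)-y|$ is controlled by $M^n(t):=\int_0^t\|u^n(\tau)\|_{L^\infty}\intd\tau$, and covering balls of radius $1+M^n(t)$ by unit balls gives $\|\omega^n(t)\|_{L^p_{\rm ul}}\le C(1+M^n(t))^{2/p}\|\omega^n_0\|_{L^p_{\rm ul}}$; dividing by $\Theta(p)$ and taking the supremum over $p\ge1$ yields $\|\omega^n(t)\|_{Y^\Theta_{\rm ul}}\le C(1+M^n(t))^{2}\|\omega_0\|_{Y^\Theta_{\rm ul}}$. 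The second, and genuinely hard, bound is the velocity estimate \eqref{eq.log-u}: I would represent $u^n(t)-u^n_0$ through a Serfati-type localized identity (splitting the Biot--Savart and pressure kernels at a scale $\lambda$), estimate the near-field piece \emph{not} through $\|\nabla u^n\|_{L^\infty}$ — which is unavailable — but through the compensated quantity $\sum_{i,k}\partial_ku^n_i\,\partial_iu^n_k=-2\det\nabla u^n$ measured in $L^p_{\rm ul}$, using the Calder\'on--Zygmund-type bound $\|\nabla u^n\|_{L^p_{\rm ul}}\lesssim p\,\|\omega^n\|_{L^p_{\rm ul}}$ and the first a priori bound, while estimating the far-field and boundary pieces in terms of $\|u^n\|_{L^\infty}$; choosing $\lambda$ as an appropriate negative power of $1+\int_0^t\|u^n\|^{1+\alpha}_{L^\infty}\intd\tau$ and then optimizing over the Lebesgue exponent $p$ — which is exactly the operation $T$ of Definition \ref{def-take-log} — produces \eqref{eq.log-u} with $\Phi=T\Theta$, at the cost of the harmless exponent $\alpha\in\,]0,1[$. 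This "new estimate for the convection term'' is where the main difficulty lies.

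\emph{Globalization and passage to the limit.} With \eqref{eq.log-u} in hand the globalization is an Osgood argument: setting $N^n(t):=1+\int_0^t\|u^n(\tau)\|^{1+\alpha}_{L^\infty}\intd\tau$, inequality \eqref{eq.log-u} reads $(N^n)'(t)\le C(t)\,\Phi(N^n(t))\,N^n(t)$, hence $(N^n)'/(N^n\Phi(N^n))\le C(t)$; integrating and invoking the admissibility condition \eqref{eq.converse-os}, which is precisely $\Theta\in\mathcal A_1$, the Osgood lemma forces $N^n$ to remain bounded on every $[0,T]$ by a constant $C(T)$ independent of $n$, and feeding this back gives $\sup_{[0,T]}\|u^n\|_{L^\infty}+\sup_{[0,T]}\|\omega^n\|_{Y^\Theta_{\rm ul}}\le C(T)$ uniformly in $n$. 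Since $\|\omega^n\|_{L^p_{\rm ul}}\le C(T)\Theta(p)$, $u^n$ is bounded in $L^\infty_{\rm loc}(\RR^+;W^{1,p}_{\rm loc})\hookrightarrow L^\infty_{\rm loc}(\RR^+;C^{0,1-2/p}_{\rm loc})$ for every $p\in\,]2,\infty[$, while $\partial_tu^n=-\mathbb P\,\Div(u^n\otimes u^n)$ is bounded in $L^\infty_{\rm loc}(\RR^+;W^{-1,p}_{\rm loc})$. The Aubin--Lions--Simon lemma then provides a subsequence converging locally in $C(\RR^+;L^2_{\rm loc})$ — strongly enough to pass to the limit in the quadratic term of the weak formulation of Definition \ref{def-weak-solution}; weak-$*$ lower semicontinuity transfers $\|\omega^n\|_{Y^\Theta_{\rm ul}}\le C(T)$ to the limit, so $\omega\in L^\infty_{\rm loc}(\RR^+;Y^\Theta_{\rm ul})$, and the uniform spatial H\"older bound together with the equation upgrades the limit to $u\in C(\RR^+\times\RR^2)$, hence $u\in C(\RR^+;L^\infty(\RR^2))$.

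\emph{Uniqueness under $\Theta\in\mathcal A_2$.} Let $u^1,u^2$ be two solutions in the stated class with the same data and put $\delta u=u^1-u^2$. Since the $u^i$ need not decay, I would work with a spatially localized energy $y(t)=\int\varphi_R\,|\delta u|^2\intd x$, the cutoff and commutator terms being absorbed by the uniform $L^\infty$ and $Y^\Theta_{\rm ul}$ bounds just established. The transport structure of \eqref{eq.vor} gives, after interpolating $L^{2p'}$ between $L^2$ and $L^\infty$ and using the uniform $L^\infty$ bound on $\delta u$,
\begin{equation*}
\frac{d}{dt}y(t)\lesssim \|\nabla u^2\|_{L^p_{\rm ul}}\,\|\delta u\|_{L^{2p'}}^2\lesssim p\,\Theta(p)\,y(t)^{1-1/p},
\end{equation*}
valid for every $p$; optimizing over $p$ as in Yudovich \cite{Y-1995} this becomes $\frac{d}{dt}y\le C(T)\,\mu(y)$ with $\mu$ a modulus for which $\int_0\intd s/\mu(s)=\infty$ precisely because $\int^\infty\intd x/(x\Theta(x))=\infty$, i.e. $\Theta\in\mathcal A_2$. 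As $y(0)=0$, the Osgood lemma forces $y\equiv0$, hence $u^1=u^2$.

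\emph{The main obstacle.} Everything except the velocity estimate \eqref{eq.log-u} is either standard two-dimensional Euler theory or the Osgood/Yudovich machinery recalled in the introduction. The delicate point is to make the Serfati-type localization work in the uniformly local, non-decaying regime: one must balance the localization scale against the growth of $\|u\|_{L^\infty}$, control the convection and pressure contributions by $\det\nabla u$ in $L^p_{\rm ul}$ (never in $L^\infty$), and arrange the $p$-dependent constants so that optimizing over $p$ returns exactly $\Phi=T\Theta$ rather than a larger modulus — the exponent $\alpha\in\,]0,1[$ being the price one pays for this.
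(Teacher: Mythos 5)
Your overall architecture matches the paper's: approximate, prove a uniformly local $L^p$ bound for $\omega$ (your Lagrangian/covering derivation gives the same estimate as the paper's Eulerian cutoff--Gronwall argument, Proposition \ref{prop-loc-p}), establish the logarithmic velocity estimate \eqref{eq.log-u}, close by Osgood using $\Theta\in\mathcal{A}_1$, and pass to the limit by Aubin--Lions. However, the central analytic step --- the estimate \eqref{eq.log-u} itself --- is only sketched, and the route you sketch is precisely the one the paper flags as problematic: the authors state that borrowing the Serfati kernel-splitting algorithm of \cite{Ser-1,Ser-2} directly does not yield the Taniuchi-type conclusion in this setting, which is why they prove instead a new low-frequency estimate for the convection term via Bony's decomposition (Lemma \ref{Lem-pro-est}: $\|\dot S_{-N}\mathcal{R}((u\cdot\nabla)u)\|_{L^\infty}\lesssim 2^{-N\alpha}\|u\|^{1+\alpha}_{L^\infty}\|\omega\|^{1-\alpha}_{p,1}+2^{-N}\|\omega\|^2_{p,1}$) combined with local Bernstein inequalities (Lemma \ref{local-bern}) and interpolation for the high frequencies, then optimize over $N$ and over $p$. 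Your plan leans on an unproved Calder\'on--Zygmund bound $\|\nabla u\|_{L^p_{\rm ul}}\lesssim p\,\|\omega\|_{L^p_{\rm ul}}$ in the uniformly local, non-decaying regime (singular integrals are not bounded on $L^p_{\rm ul}$ without extra input, e.g. $\|u\|_{L^\infty}$ and a frequency or kernel splitting), and on the assertion that the near-field/pressure pieces organize themselves so that the $p$-optimization returns exactly $\Phi=T\Theta$; you yourself label this the main difficulty, so as written the existence proof has a genuine gap at its key step.

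The uniqueness part has a second gap. You propose a Yudovich energy on a localized quantity $y_R(t)=\int\varphi_R|\delta u|^2$, claiming the cutoff and pressure contributions are ``absorbed by the uniform bounds.'' For non-decaying solutions this is not innocuous: the transport--cutoff term $\int (u\cdot\nabla\varphi_R)|\delta u|^2$ is of size $R^{-1}\cdot m({\rm supp}\,\nabla\varphi_R)\sim R$, so it is not small, and renormalizing by $R^{-2}$ destroys the conclusion (vanishing of averaged energy does not give $\delta u\equiv0$); moreover $\delta\Pi$ is a Riesz-transform image of products of merely bounded functions, hence only a BMO-type object, and controlling $\int\varphi_R\,\nabla\delta\Pi\cdot\delta u$ in a way compatible with the Osgood loop requires weighted or localized singular-integral estimates that you do not supply. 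The paper avoids the $L^2$ framework altogether: it proves uniqueness in the class $\overline{Y^\Theta_{\rm Lip}}$ (Proposition \ref{Prop-uni}) by applying $\Delta_q$ to the difference equation, invoking the convection, commutator and pressure estimates of Lemmas \ref{convection-est}--\ref{pressure-estimate} in $B^{-\varepsilon}_{\infty,\infty}$, optimizing over the frequency cutoff $N$, and running Osgood with $\int_1^\infty\Theta(x)^{-1}\intd x=\infty$; the hypothesis $\Theta\in\mathcal{A}_2$ enters by showing $\|S_{j+1}\nabla u\|_{L^\infty}\lesssim\|u\|_{L^\infty}+j\|\omega\|_{j,1}$, i.e. $u\in Y^\Theta_{\rm Lip}$. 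Your formal Yudovich computation (interpolation in $p$, Osgood modulus with $\int^\infty\frac{\intd x}{x\Theta(x)}=\infty$) is the right heuristic, but without a concrete treatment of the localization errors and of the pressure for non-decaying data it does not constitute a proof.
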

\begin{remark}
Let $\alpha\in[0,1]$ and
\begin{equation}\label{eq.exam-1}
g^\alpha(x)=\begin{cases}
\log^\alpha\big(e-\log |x|\big),\quad&\text{for}\quad |x|\leq1;\\
0,\quad&\text{for}\quad |x|>1.
\end{cases}
\end{equation}
A direct calculation allows us to conclude that $g^1(x)$ belongs to $Y^\Theta_{\rm ul}(\RR^2)$ with $\Theta(p)=\log p\in\mathcal{A}_2$,
also see \cite{Y-1995} for the proof.

Next, we take the initial velocity $u_0=(u_{1,0}, u_{2,0})$ with
\begin{equation}\label{eq.exam-vel}
u_{1,0}=\frac{\partial_2}{\Delta}g^1(x)-\sin(x_1)\cos(x_2)\quad\text{and}\quad u_{2,0}=-\frac{\partial_1}{\Delta}g^1(x)+\cos(x_1)\sin(x_2).
\end{equation}
From this, it follows that $\Div u_0=0$ and $u_0$ is bounded. In fact,  we see from the paper \cite{Ser-1} that
\begin{equation*}
\begin{split}
\left|\frac{\partial_2}{\Delta}g^1(x)\right|+\left|\frac{\partial_1}{\Delta}g^1(x)\right|&\leq C\int_{\RR^2}\frac{1}{|x-y|}|g^1(y)|\intd y\\
&= C\int_{|x-y|\leq1}\frac{1}{|x-y|}|g^1(y)|\intd y+ C\int_{|x-y|>1}\frac{1}{|x-y|}|g^1(y)|\intd y\\
&\leq C\|g^1\|_{ L^3(\RR^2)}+C\|g^1\|_{L^1(\RR^2)}\\
&\leq C\|g^1\|_{1,\,1}+C\|g^1\|_{3,\,1}\leq C\|g^1\|_{3,\,1}<\infty.
\end{split}
\end{equation*}
Thanks to the Biot-Savart law, one infers that
\begin{equation}\label{eq.exam-vor}
\omega_0=\partial_2u_{1,0}-\partial_1u_{2,0}=g^1(x)+2\sin(x_1)\sin(x_2).
\end{equation}
We easily find that $\omega_0(x)$ is an unbounded
and non-decaying  function belonging to $Y^\Theta_{\rm ul}(\RR^2)$ with $\Theta(p)=\log p\in\mathcal{A}_2$.
This implies that we can obtain the global existence and uniqueness of weak solution to \eqref{eq.Euler} in $Y^\Theta_{\rm ul}(\RR^2)$
 involving unbounded and non-decaying vorticity.
\end{remark}
\begin{theorem}\label{Coro-Exi-bmo}
Let $\alpha\in]0,1/2]$ and $u_0\in L^\infty(\RR^2)$ and $\omega_0\in Y^\Theta_{\rm ul}(\RR^2)\cap \mathcal{M}_{\varphi}(\RR^2)$ with $\Theta\in \mathcal{A}_1$ and $\varphi(r)=\log^\alpha(e-\log r)$ with $r\in]0,1/2[$. Then Problem \eqref{eq.Euler} admits a unique global solution $u$ such that
\[u(t,x)\in C\big(\RR^+;L^\infty(\RR^2)\big)\quad\text{and}\quad\omega(t,x)\in L^\infty_{\rm loc}\big(\RR^+;Y^\Theta_{\rm ul}(\RR^2)\cap \mathcal{M}_{\varphi}(\RR^2)\big).\]
Here, the Spanne space  $\mathcal{M}_{\varphi}(\RR^2)$ will be recalled in Definition \ref{def-spanne}.
\end{theorem}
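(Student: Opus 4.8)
The plan is to obtain this statement as a refinement of Theorem \ref{Theorem-Exi}. Since $\omega_0\in Y^\Theta_{\rm ul}(\RR^2)$ with $\Theta\in\mathcal{A}_1$, that theorem already provides a global solution $u\in C(\RR^+;L^\infty(\RR^2))$ with $\omega\in L^\infty_{\rm loc}(\RR^+;Y^\Theta_{\rm ul}(\RR^2))$, and the a priori estimate \eqref{eq.log-u} (via an Osgood/Bihari-type argument with the function $x\mapsto x\Phi(x)$, using that $\Theta\in\mathcal{A}_1$) yields for every $T>0$ a constant $C(T)$ with $\sup_{0\le t\le T}\big(\|u(t)\|_{L^\infty}+\|\omega(t)\|_{Y^\Theta_{\rm ul}}\big)\le C(T)$. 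It therefore remains to establish two things: first, that the additional Spanne regularity is propagated, i.e.\ $\omega\in L^\infty_{\rm loc}(\RR^+;\mathcal{M}_\varphi(\RR^2))$; and second, uniqueness, which does not follow from the last assertion of Theorem \ref{Theorem-Exi} because $\Theta$ need not lie in $\mathcal{A}_2$. The role of the hypothesis $\omega_0\in\mathcal{M}_\varphi$, with the specific weight $\varphi(r)=\log^\alpha(e-\log r)$ and the restriction $\alpha\le1/2$, is precisely to make up for the possibly-absent $\mathcal{A}_2$ property; and, as will be seen, it is in the propagation step that $\alpha\le1/2$ is actually forced.

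For the propagation I would argue along the flow. Since $u$ is bounded and, by the modulus estimate below, Osgood-regular, its flow $\psi_t$ is a measure-preserving homeomorphism of $\RR^2$ (Liouville's theorem, $\Div u=0$), and the transport equation gives $\omega(t,\cdot)=\omega_0\circ\psi_t^{-1}$ (this step, together with the whole scheme, is made rigorous by first smoothing the data and passing to the limit with the uniform bounds above). Fixing a ball $B=B_r(x_0)$, $0<r<1/2$, and changing variables by $\psi_t$, the mean oscillation of $\omega(t)$ over $B$ equals that of $\omega_0$ over $E:=\psi_t^{-1}(B)$, a set of the same Lebesgue measure $|B|$. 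The difficulty is that $u$ is merely Osgood, not Lipschitz, so $E$ need not resemble a ball: it may be thin and spread over a region of diameter far exceeding $r$, and enclosing $E$ in a single ball is hopelessly lossy. One must instead exploit the ``quasi-conformal property'' of the two-dimensional incompressible flow together with Yudovich's observation from \cite{Y-1995} (in the spirit of \cite{B-K-2,B-H13}): one covers $E$ efficiently by Euclidean balls, the number and radii of which are governed jointly by the modulus of continuity of $\psi_t^{-1}$ and by measure preservation, and sums the $\mathcal{M}_\varphi$-estimates over the covering, keeping track of the interplay between the flow's spreading and the iterated-logarithmic weight $\varphi$. The outcome is $\|\omega(t)\|_{\mathcal{M}_\varphi}\le C(T)\|\omega_0\|_{\mathcal{M}_\varphi}$ for $t\le T$; the exponent $\alpha\le1/2$ is exactly the largest for which the room created by the growth of $\varphi(r)$ as $r\to0$ still absorbs the covering loss produced by the flow. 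This is the step I expect to be the main obstacle.

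Once $\omega(t)\in\mathcal{M}_\varphi$ is available, the velocity can be quantified. A dyadic (Littlewood--Paley) argument — the same mechanism by which a $\rm bmo$ vorticity yields a log-Lipschitz velocity, here with the $\varphi$-weight producing an iterated-logarithmic correction and with the low frequencies and the non-decaying part handled by the $Y^\Theta_{\rm ul}$-bound (as in \cite{Tan04}) — gives, for $t\le T$ and $|x-x'|$ small,
\begin{equation*}
|u(t,x)-u(t,x')|\le C(T)\,|x-x'|\,\log\frac{1}{|x-x'|}\,\Big(\log\log\frac{1}{|x-x'|}\Big)^{\alpha}.
\end{equation*}
Because $\int_0 h^{-1}\big(\log\tfrac1h\big)^{-1}\big(\log\log\tfrac1h\big)^{-\alpha}\intd h=\infty$ for $\alpha\le1$, the right-hand side is an Osgood modulus, which a posteriori justifies the uniqueness (and well-posedness) of the flow $\psi_t$ used in the previous paragraph.

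For uniqueness of the weak solution, I would run a flow-comparison argument of Yudovich type. Given two solutions $u^1,u^2$ as in the statement, both satisfy the modulus estimate above on $[0,T]$ (with a possibly larger $C(T)$), by the propagation bound applied to each. Writing $u^1-u^2=K\ast\big(\omega_0\circ(\psi^1_t)^{-1}-\omega_0\circ(\psi^2_t)^{-1}\big)$ and changing variables, one bounds $\|u^1(t)-u^2(t)\|_{L^\infty}$ in terms of the flow discrepancy $Q(t):=\|\psi^1_t-\psi^2_t\|_{L^\infty}$, now using $\|\omega_0\|_{\mathcal{M}_\varphi}$ (and the $Y^\Theta_{\rm ul}$-bound for the far and non-decaying contributions) in place of an $L^\infty$-bound on the vorticity; combining this with the modulus of $u^1$ applied to $\psi^1_t-\psi^2_t$ gives $\dot Q\le C(T)\,\mu(Q)$ with $\mu(s)=s\log\frac1s(\log\log\frac1s)^\alpha$, so Osgood's lemma forces $Q\equiv0$. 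Hence the two flows, and therefore $\omega^1=\omega^2$ and $u^1=u^2$, coincide. Finally, $\omega\in L^\infty_{\rm loc}(\RR^+;Y^\Theta_{\rm ul}\cap\mathcal{M}_\varphi)$ and $u\in C(\RR^+;L^\infty)$ follow by combining Theorem \ref{Theorem-Exi} with the bounds obtained above.
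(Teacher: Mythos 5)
Your strategy for the a priori propagation of the Spanne regularity (push $\omega$ along the measure-preserving flow, cover $\psi_t^{-1}(B)$ à la Whitney, control the covering loss by the modulus of continuity of the flow) is exactly the paper's (Proposition \ref{prop.local-Spanne}, built on Lemma \ref{Lem-small-2} and Proposition \ref{Prop-flowmap-log}), but you leave the decisive estimate as an expectation rather than proving it, and your ordering hides a circularity that has to be resolved quantitatively: the covering loss is governed by the modulus of $\psi_t^{-1}$, which is governed by $\|u\|_{{\rm LogLog}^\alpha}$, which in turn is controlled --- via the dyadic estimate you only state afterwards, i.e.\ the paper's \eqref{claim-embed} together with \eqref{recall} --- by the very quantity $\|\omega(t)\|_{\mathcal{M}_\varphi}$ you are trying to bound. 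The paper closes this loop by proving the covering estimate with all constants expressed through $V_{{\rm LogLog}^\alpha}(t)=\int_0^t\|u(\tau)\|_{{\rm LogLog}^\alpha}\intd\tau$, arriving at $\|\omega(t)\|_{\mathcal{M}_\varphi}\leq C\big(1+V_{{\rm LogLog}^\alpha}(t)\big)^{\frac{\alpha}{1-\alpha}}$, and then running Gronwall after inserting \eqref{claim-embed}; the restriction $\alpha\in]0,1/2]$ enters precisely as $\frac{\alpha}{1-\alpha}\leq1$, i.e.\ to make the dependence on $V_{{\rm LogLog}^\alpha}$ linear so that Gronwall closes --- not, as you suggest, as the threshold at which the weight $\varphi$ "absorbs the covering loss", which is finite for every $\alpha<1$. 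Since the asserted outcome $\|\omega(t)\|_{\mathcal{M}_\varphi}\leq C(T)$ is the actual content of the theorem's hard step, admitting it as "the main obstacle" leaves the heart of the proof missing.

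For uniqueness you take a genuinely different route (Lagrangian flow comparison in the style of Yudovich) from the paper, and as written it has a genuine gap in this setting: $\omega_0$ is only uniformly locally integrable and non-decaying, so the representation $u^1-u^2=K\ast\big(\omega_0\circ(\psi^1_t)^{-1}-\omega_0\circ(\psi^2_t)^{-1}\big)$ is not an absolutely convergent integral (already $\int_{|y|\geq1}|x-y|^{-1}|f(y)|\intd y$ diverges in general for $f\in L^1_{\rm ul}(\RR^2)$), and even the improved kernel-difference bound $|K(x-\psi^1_t(y))-K(x-\psi^2_t(y))|\lesssim Q(t)\,|x-y|^{-2}$ leaves a logarithmically divergent far-field integral against a non-decaying vorticity, so the bound $\|u^1(t)-u^2(t)\|_{L^\infty}\lesssim \mu(Q(t))$ is not obtained; invoking "the $Y^\Theta_{\rm ul}$-bound for the far contributions" does not repair this, because those contributions are not small in $Q$. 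The paper avoids the Biot--Savart law altogether: it deduces from \eqref{recall} that $u\in Y^\Theta_{\rm Lip}$ with $\Theta(j)=j\log^\alpha(1+j)$ (an Osgood weight since $\alpha\leq1$) and then applies Proposition \ref{Prop-uni}, an Eulerian estimate for $\delta u$ in $B^{-\varepsilon}_{\infty,\infty}$ resting on the convection, commutator and pressure estimates of the appendix plus Osgood's lemma. To salvage your Lagrangian argument you would first need a Serfati-type renormalization of the velocity-vorticity relation valid for bounded velocity and non-decaying vorticity; otherwise the frequency-space argument of the paper is the correct replacement.
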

\begin{remark}
A simple calculation yields that $g^\alpha(x)$ defined in \eqref{eq.exam-1} lies in Spanne space  $\mathcal{M}_{\varphi}(\RR^2)$ with $\varphi(r)=\log^\alpha(e-\log r)$ for $\alpha\in[0,1]$.
\end{remark}
\begin{remark}
We can generalize Theorem \ref{Coro-Exi-bmo} to the more general Spanne space $\mathcal{M}_{\varphi}(\RR^2)$ with
\begin{equation*}
\varphi(r)=\log^\alpha(e-\log r)\,\log_2(e-\log r)\cdots\log_m(e-\log r)\,\,\text{for any natural}\,\,m,
\end{equation*}
where $\log_m$ stands for the $m$-th iteration of logarithm.
\end{remark}

Next, we  state  results concerning a loss of regularity of solution to the two-dimensional Euler equations
in $\mathrm{\textbf{bmo}}$-type space. Specifically:
\begin{theorem}\label{Theorem-Exi-UNI}
Let $u_0\in L^\infty(\RR^2)$ and $\omega_0\in{\rm L^\alpha bmo}(\RR^2)$ with $\alpha\in[0,\infty[$ (see Definition \ref{def2.24}). Then Problem \eqref{eq.Euler} admits  at least a global solution $u$ such that
\begin{enumerate}
\item[\rm (1)] \; \label{THM-3-item-1} when $0\leq\alpha\leq1$,
$
u(t,x)\in C\big(\RR^+;L^\infty(\RR^2)\big)$ and $\omega(t,x)\in L^\infty_{\rm loc}\big(\RR^+;{\rm L^{\alpha-1} bmo}(\RR^2)\big).
$

  \item[\rm (2)]\; \label{THM-3-item-2} when $\alpha>1$,
 $
u(t,x)\in C\big(\RR^+;L^\infty(\RR^2)\big)$ and $\omega(t,x)\in L^\infty_{\rm loc}\big(\RR^+;{\rm L^{\alpha} bmo}(\RR^2)\big).
$
 \end{enumerate}
\noindent In particular, the solution $u$ is unique for  $\alpha\geq1$.

\end{theorem}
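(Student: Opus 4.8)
The plan is to reduce to the Yudovich-type theory of Theorem~\ref{Theorem-Exi} via the John--Nirenberg-type inequality developed earlier, and then to upgrade the regularity of $\omega$ by the transport estimates in the logarithmic $\mathrm{bmo}$-scale, treating the ranges $0\le\alpha\le1$ and $\alpha>1$ separately according to whether the Biot--Savart law yields a merely log-Lipschitz or a genuinely (bi-)Lipschitz velocity. First I would show, using the localized John--Nirenberg inequality, that every $f\in{\rm L^\alpha bmo}(\RR^2)$ with $\alpha\ge0$ belongs to $Y^\Theta_{\rm ul}(\RR^2)$ for an explicit modulus $\Theta$, with $\Theta\in\mathcal{A}_1$ when $0\le\alpha<1$ and $\Theta\in\mathcal{A}_2$ when $\alpha\ge1$. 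Combined with $u_0\in L^\infty(\RR^2)$, Theorem~\ref{Theorem-Exi} then furnishes a global weak solution with $u\in C(\RR^+;L^\infty(\RR^2))$ and $\omega\in L^\infty_{\rm loc}(\RR^+;Y^\Theta_{\rm ul})$, so that $\|u(t)\|_{L^\infty}$ and $\sup_{p\ge1}\|\omega(t)\|_{L^p_{\rm ul}}/\Theta(p)$ are bounded on compact time intervals; moreover, when $\alpha\ge1$ this solution is unique, since $\mathcal{A}_2$ is precisely the admissibility class of the uniqueness part of Theorem~\ref{Theorem-Exi}. This already gives the continuity of $u$ in all cases and the uniqueness for $\alpha\ge1$.

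It remains, for $0\le\alpha\le1$, to propagate the $\mathrm{bmo}$-regularity. Along the smooth approximating solutions underlying Theorem~\ref{Theorem-Exi} (obtained by mollifying the data, so that $\omega_0^n\to\omega_0$ is bounded both in ${\rm L^\alpha bmo}$ and in the $Y^\Theta_{\rm ul}$ norm), I would run the transport-equation estimates of the paper along each approximate, measure-preserving flow. These give $\|\omega^n(t)\|_{{\rm L^{\alpha-1}bmo}}\le C(t)\|\omega_0^n\|_{{\rm L^\alpha bmo}}$ when $0\le\alpha<1$ and $\|\omega^n(t)\|_{{\rm L_{\log}bmo}}\le C(t)\|\omega_0^n\|_{{\rm Lbmo}}$ when $\alpha=1$, with $C(t)$ depending only on $t$ and on the quantities already bounded in the first step, hence uniform in $n$. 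Since ${\rm L_{\log}bmo}\hookrightarrow{\rm L^{0}bmo}$, both subcases match the asserted $\omega\in L^\infty_{\rm loc}(\RR^+;{\rm L^{\alpha-1}bmo})$ after passing to the limit, which is done exactly as for Theorem~\ref{Theorem-Exi} (strong compactness of $u^n$, weak-$\ast$ convergence of $\omega^n$), together with the lower semicontinuity of the $\mathrm{bmo}$-type norms under almost-everywhere convergence.

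For $\alpha>1$ the decisive point is that ${\rm L^\alpha bmo}$ satisfies a Dini-type condition, so the Biot--Savart law turns $\omega\in{\rm L^\alpha bmo}$ into a (bi-)Lipschitz velocity with $\int_0^t\|\nabla u(\tau)\|_{L^\infty}\intd\tau<\infty$ on every finite interval. Granting this, I would close the a priori bound by a bootstrap: the logarithmic-loss estimate controls $\|\nabla u\|_{L^\infty}$ by a function of $\|\omega\|_{{\rm L^\alpha bmo}}$, while the optimized transport estimate of the paper (the one also giving the short proof of \eqref{eq.optimal-BK}) yields $\|\omega(t)\|_{{\rm L^\alpha bmo}}\le C(t)\big(1+\int_0^t\|\nabla u\|_{L^\infty}\intd\tau\big)^\alpha\|\omega_0\|_{{\rm L^\alpha bmo}}$; a Gr\"onwall argument on $\int_0^t\|\nabla u\|_{L^\infty}\intd\tau$ then gives a finite bound on every $[0,T]$, hence $\omega\in L^\infty_{\rm loc}(\RR^+;{\rm L^\alpha bmo})$. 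The Cauchy--Lipschitz flow simultaneously provides existence, $u\in C(\RR^+;L^\infty)$ and uniqueness in this range, which completes the proof.

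The main obstacle is the handling of the limit and the bootstrap in the logarithmic $\mathrm{bmo}$-scale. For $0\le\alpha\le1$ one must check that the John--Nirenberg constants do not degenerate under mollification and that the ${\rm L^{\alpha-1}bmo}$ (respectively ${\rm L_{\log}bmo}$) norms are recovered in the limit --- this is where the quantitative John--Nirenberg inequality and the lower semicontinuity of the norms are essential. For $\alpha>1$, the borderline verification that ${\rm L^\alpha bmo}$ genuinely forces a time-locally-integrable Lipschitz bound on $u$ is what separates the ``loss'' regime $\alpha\le1$ from the ``no loss'' regime $\alpha>1$; the endpoint $\alpha=1$, where $\Theta$ ceases to grow polynomially and the bi-Lipschitz transport estimate carries the extra factor $\log\big(1+\int_0^t\|\nabla u\|_{L^\infty}\intd\tau\big)$, is the most delicate case and explains why uniqueness is asserted precisely for $\alpha\ge1$.
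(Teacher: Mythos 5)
Your treatment of the range $0\le\alpha\le1$ and of the uniqueness follows essentially the paper's route: the embedding ${\rm L^\alpha bmo}\hookrightarrow Y^\Theta_{\rm ul}$ (Lemma \ref{lemma-p-alpha}) with $\Theta(p)=p^{1-\alpha}$ resp. $\log(1+p)$, the a priori estimates of Proposition \ref{prop-Lbmo} run along the approximate solutions, compactness, and an Osgood-type uniqueness (the paper does it via Proposition \ref{Prop-uni} with $\Theta(j)=j\log(1+j)$, which is the same modulus your reduction to Theorem \ref{Theorem-Exi} produces). One incidental claim there is wrong, however: ${\rm L_{\log}bmo}$ does \emph{not} embed into ${\rm L^{0}bmo}={\rm bmo}$ — its norm divides the averages by $\log(e-\log r)$, so the inclusion goes the other way (${\rm bmo}\hookrightarrow{\rm L_{\log}bmo}$); at $\alpha=1$ what the estimates actually give is the ${\rm L_{\log}bmo}$ bound \eqref{eq.lbm0-result-2}, and your attempt to upgrade it to ${\rm bmo}$ by that embedding does not work (the paper itself only proves the $\rm L_{\log}bmo$ control in Proposition \ref{prop-Lbmo}).

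The genuine gap is in the case $\alpha>1$. Your bootstrap couples $\|\nabla u(t)\|_{L^\infty}\le C\big(1+\|\omega(t)\|_{{\rm L^\alpha bmo}}\big)$ (via ${\rm L^\alpha bmo}\hookrightarrow B^0_{\infty,1}$) with the propagation estimate $\|\omega(t)\|_{{\rm L^\alpha bmo}}\le C\big(1+V_{\rm Lip}(t)\big)^{\alpha}\|\omega_0\|_{{\rm L^\alpha bmo}}$ of Theorem \ref{prop.reg-pser}, and then invokes Gr\"onwall on $V_{\rm Lip}(t)=\int_0^t\|\nabla u\|_{L^\infty}\intd\tau$. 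But this yields the differential inequality $V_{\rm Lip}'(t)\le C\big(1+V_{\rm Lip}(t)\big)^{\alpha}$ with $\alpha>1$, which is supercritical: its comparison ODE blows up in finite time, so the argument gives only a local-in-time bound and does not prove $\omega\in L^\infty_{\rm loc}(\RR^+;{\rm L^\alpha bmo})$ globally. The paper closes the loop differently (Corollary \ref{coro.reg-pser}, and the remark after Theorem \ref{Theorem-Exi-UNI}): one first uses Vishik's estimate \eqref{eq.Vishik-log}, which is \emph{linear} in $V_{\rm Lip}$, i.e. $\|\omega(t)\|_{B^0_{\infty,1}}\le C\big(1+V_{\rm Lip}(t)\big)\|\omega_0\|_{B^0_{\infty,1}}$, together with ${\rm L^\alpha bmo}\hookrightarrow B^0_{\infty,1}$, so that Gr\"onwall gives a global (exponential) bound on $V_{\rm Lip}(t)$; only \emph{afterwards} is the power-$\alpha$ estimate \eqref{eq.log-crease} applied, with $V_{\rm Lip}(t)$ already controlled, to bound $\|\omega(t)\|_{{\rm L^\alpha bmo}}$. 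Without this linear $B^0_{\infty,1}$ step (or some substitute estimate at most linear, or Osgood-admissible, in $V_{\rm Lip}$) your proof of item \eqref{THM-3-item-2} does not go through.
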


\begin{remark}
For the case $0\leq\alpha\leq1$, we see that the vorticity $\omega(t,x)$ has a
logarithmic loss of regularity for all $t>0$. Similar results concerning  loss of regularity on vorticity were shown in the works \cite{B-H13} and \cite{Vi00},
see also Theorem  \ref{Theorem-Exi-UNI} for $\alpha\in]0,1]$. From the optimal estimate \eqref{eq.optimal-BK},  it seem to be impossible to get the regularity persistence of vorticity in $\rm bmo$. Thus,  the loss of regularity on vorticity  in Theorem \ref{Theorem-Exi-UNI} seems to be inevitable.
It is interesting to  show  whether this loss is optimal or not, and we plan to study it in our future work.
\end{remark}

Next, we focus on  the control estimates in  $\rm L^\alpha bmo$ (
$\alpha\geq0$) of the flow mapping determined by the the
vorticity-stream equation \eqref{eq.vorticity-stream} with a
bi-Lipschitz vector field.  The interesting point  is how to
optimize the control estimates  by using the measure preserving
property,  the generalized John-Nirenberg
inequality in a new way.
\begin{theorem}\label{prop.reg-pser}
Let $u_0\in L^\infty(\RR^2)$ and $\omega_0\in {\rm
L^{\alpha}bmo}(\RR^2)$ with $\alpha\in[0,\infty[$. Assume that $u\in
L^1_{\rm loc}(\RR^+;{\rm Lip}(\RR^2))$ is a smooth solution of
\eqref{eq.Euler}. Then there exists a positive constant $C$,
dependent of the initial data and $\alpha$, such that
\begin{equation}\label{eq.log-crease}
\begin{split}
&\|u(t)\|_{L^\infty(\RR^2)}+\|\omega(t)\|_{{\rm L^{\alpha}bmo}(\RR^2)}\\\leq& C\begin{cases}\big(1+V_{\rm {Lip}}(t)\big)\|\omega_0\|_{{\rm L^{\alpha}bmo}(\RR^2)},\quad &\text{for}\quad \alpha\in[0,1[;\\
\big(1+V_{\rm {Lip}}(t)\big)\log\big(1+V_{\rm {Lip}}(t)\big)\|\omega_0\|_{{\rm L^{\alpha}bmo}(\RR^2)},\quad &\text{for}\quad \alpha=1;\\
\big(1+V_{\rm {Lip}}(t)\big)^\alpha\|\omega_0\|_{{\rm L^{\alpha}bmo}(\RR^2)},\quad &\text{for}\quad \alpha>1.
\end{cases}
\end{split}
\end{equation}
\end{theorem}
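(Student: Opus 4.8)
The plan is to pass to the Lagrangian description: writing $\Psi_t$ for the flow of $u$, one has $\omega(t)=\omega_0\circ\Psi_t^{-1}$, and since $\Div u=0$ the map $\Psi_t$ preserves Lebesgue measure, while $u\in L^1_{\rm loc}(\RR^+;{\rm Lip}(\RR^2))$ and Gr\"onwall's lemma give the two-sided bound $|x-y|\,e^{-V}\leq|\Psi_t^{\pm1}(x)-\Psi_t^{\pm1}(y)|\leq|x-y|\,e^{V}$ with $V:=V_{\rm Lip}(t)$. In particular, for a ball $B=B_r(x_0)$, setting $y_0:=\Psi_t^{-1}(x_0)$ and $E:=\Psi_t^{-1}(B)$, one has $m(E)=m(B)$ together with $B_{re^{-V}}(y_0)\subset E\subset B_{re^{V}}(y_0)$; this pair of facts — volume preservation and bounded eccentricity — is the ``quasi-conformal property'' we exploit. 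Estimate \eqref{eq.log-crease} decouples into a bound for $\|u(t)\|_{L^\infty}$ and one for $\|\omega(t)\|_{{\rm L^{\alpha}bmo}}$. The velocity is handled by the quantitative argument underlying Theorem \ref{Theorem-Exi}: from ${\rm L^{\alpha}bmo}\hookrightarrow Y^{\Theta}_{\rm ul}$ (with $\Theta$ comparable to the John--Nirenberg exponent) and volume preservation one gets $\|\omega(t)\|_{L^p_{\rm ul}}\leq e^{2V/p}\|\omega_0\|_{L^p_{\rm ul}}$; optimizing in $p$ by the operator $T$ of Definition \ref{def-take-log}, together with a Serfati--Biot--Savart representation of $u$ in terms of $\omega$ and $u_0$, yields $\|u(t)\|_{L^\infty}\lesssim(1+V)\|\omega_0\|_{{\rm L^{\alpha}bmo}}$ (the $u_0$-contribution being absorbed into the constant), which is dominated by the right-hand side of \eqref{eq.log-crease} in every case. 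This part presents no real difficulty.

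The crux is the oscillation estimate for $\omega(t)$. Fix $B=B_r(x_0)$ with $r$ small; by volume preservation, $\Avg_B|\omega(t)-\Avg_B\omega(t)|\leq\frac{2}{m(B)}\int_E|\omega_0-c|$ for any constant $c$. The observation borrowed from \cite{Y-1995} is to pivot on the \emph{inscribed} ball, i.e.\ to take $c:=\Avg_{B_{re^{-V}}(y_0)}\omega_0$. Decompose $E$ by a Whitney covering $\{B_i=B_{\rho_i}(z_i)\}$ with $\rho_i\lesssim r$, bounded overlap, and $\sum_i m(B_i)\lesssim m(E)$, and split
\begin{equation*}
\frac{1}{m(B)}\int_E|\omega_0-c|\leq\frac{1}{m(B)}\sum_i m(B_i)\Avg_{B_i}|\omega_0-\Avg_{B_i}\omega_0|+\frac{1}{m(B)}\sum_i m(B_i)\big|\Avg_{B_i}\omega_0-c\big|.
\end{equation*}
Since $\rho_i\lesssim r$ and the defining weight $\rho_\alpha$ of ${\rm L^{\alpha}bmo}$ (with $\rho_0\equiv1$ and $\rho_\alpha(r)\approx(e-\log r)^{-\alpha}$ for small $r$) is nondecreasing in the radius, the first sum is $\lesssim\rho_\alpha(r)\|\omega_0\|_{{\rm L^{\alpha}bmo}}$, \emph{without} growth in $V$ — this is exactly what the inscribed-ball choice buys, the heavy weight at the large scale $re^{V}$ never being incurred. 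For the second sum one chains averages from each $B_i$ up to a ball of radius $\sim re^{V}$ and back down to $B_{re^{-V}}(y_0)$; invoking the generalized John--Nirenberg inequality established above, this gives, uniformly in $i$,
\begin{equation*}
\big|\Avg_{B_i}\omega_0-c\big|\lesssim\|\omega_0\|_{{\rm L^{\alpha}bmo}}\int_{re^{-V}}^{\,2re^{V}}\rho_\alpha(s)\,\frac{\intd s}{s},
\end{equation*}
so the second sum obeys the same bound.

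Dividing by $\rho_\alpha(r)$ and substituting $\sigma=e-\log s$ reduces everything to estimating $a^{\alpha}\int_{a-V}^{a+V}\sigma^{-\alpha}\intd\sigma$ with $a:=e-\log r\geq V$; an elementary computation gives $\lesssim 1+V$ for $\alpha\in[0,1[$, $\lesssim(1+V)\log(1+V)$ for $\alpha=1$ (the borderline, where $\sigma^{-1}$ is not integrable near $+\infty$), and $\lesssim(1+V)^{\alpha}$ for $\alpha>1$ (where $\sigma^{-\alpha}$ is integrable near $+\infty$, so the integral saturates and all the growth comes from $a^{\alpha}$). The large-scale range $r\gtrsim1$ and the transitional range $re^{V}\gtrsim1>r$ are treated by the same splitting with $\rho_\alpha$ frozen at its saturated value and contribute no more than $1+V$. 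Taking the supremum over $B$, adding the uniformly-local part of the norm, and combining with the velocity estimate gives \eqref{eq.log-crease}; the case $\alpha=0$ recovers \eqref{eq.optimal-BK} with the promised short proof.

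The genuinely delicate point is precisely this sharp bookkeeping of scales. The naive route — circumscribe $E$ by $B_{re^{V}}(y_0)$ and apply John--Nirenberg there — does convert the exponential volume distortion $e^{2V}$ into a factor $1+V$, but is then forced to control $\omega_0$ on that larger ball with the heavier weight $\rho_\alpha(re^{V})>\rho_\alpha(r)$, costing an extra $(1+V)^{\alpha}$ and yielding only $(1+V)^{1+\alpha}$. Attaining the announced exponents requires genuinely avoiding that circumscription: one must use that $E$ has full measure $m(B)$ yet sits inside a much larger ball and contains a ball of radius $re^{-V}$, and arrange the Whitney/chaining decomposition so that the weights $\rho_\alpha(s)$ for $s$ near $re^{V}$ enter only through $\int\rho_\alpha(s)\,\intd s/s$ — whose three asymptotic regimes produce exactly the trichotomy in \eqref{eq.log-crease}. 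Keeping the velocity estimate of matching order and handling the transitional scales cleanly are the remaining points to check.
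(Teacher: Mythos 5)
Your small–radius mechanism (inscribed-ball pivot, covering of $E=\Psi_t^{-1}(B)$, chaining, and the elementary integral $a^{\alpha}\int_{a-V}^{a+V}\sigma^{-\alpha}\intd\sigma$, $V:=V_{\rm Lip}(t)$) is a genuinely different route from the paper's and its asymptotics do reproduce the trichotomy in the range $re^{V}\leq 1$, but two steps fail as written. First, the bound $\big|\Avg_{B_i}\omega_0-c\big|\lesssim\|\omega_0\|_{\rm L^{\alpha}bmo}\int_{re^{-V}}^{2re^{V}}\rho_\alpha(s)\,\tfrac{\intd s}{s}$ cannot hold \emph{uniformly in $i$} for a genuine Whitney covering: Whitney balls near $\partial E$ have radii $\rho_i\ll re^{-V}$, and chaining up from scale $\rho_i$ costs in addition $\int_{\rho_i}^{re^{-V}}\rho_\alpha(s)\tfrac{\intd s}{s}\approx(-\log\rho_i)^{1-\alpha}$ for $\alpha<1$ (and $\log\log(1/\rho_i)$ for $\alpha=1$), which is unbounded in $i$. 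The sum can still be controlled, but only with an ingredient you never supply: geometric decay of $\Theta_k=\sum_{\rho_i\sim 2^{-k}re^{-V}}m(B_i)$, i.e.\ the boundary-layer/Whitney measure estimate coming from the bi-Lipschitz flow (this is exactly what the paper's Lemma \ref{Lem-small-2} and Bernicot--Keraani's argument provide); alternatively one must abandon Whitney for a single-scale covering at radius $re^{-V}$, again with a measure argument. Second, and more seriously, the transitional range $re^{V}\geq 1>r$ is not ``no more than $1+V$'' under frozen-weight chaining: at scales $\geq1$ the ${\rm L^{\alpha}bmo}$ norm only yields the uniform local $L^1$ bound, so each doubling step costs a full $\|\omega_0\|_{\rm L^{\alpha}bmo}$, there can be $\sim V$ such steps, and simultaneously $(-\log r)^{\alpha}$ can be $\sim V^{\alpha}$ (take $-\log r\sim V/2$); your route then gives $(1+V)^{1+\alpha}$ there, not $1+V$, for $\alpha\in\,]0,1[$. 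Pure chaining cannot do better in that range, because the pivot $c$ is an average over a tiny ball, which the norm does not control pointwise.

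The paper avoids both problems, and your closing critique of the ``naive circumscription'' misrepresents its proof. It splits at $r\leq e^{-4V}$: in that regime it \emph{does} circumscribe by $B_{re^{V}}(\psi(x))$, but the weights at scales $r$ and $re^{V}$ are then comparable, so no $(1+V)^{\alpha}$ loss occurs; the volume dilation $e^{2V}$ is absorbed by working with ${\rm L^{\alpha}bmo}_p$, Corollary \ref{Coro.JN} ($\|\cdot\|_{{\rm L^{\alpha}bmo}_p}\leq Cp\|\cdot\|_{{\rm L^{\alpha}bmo}}$) and $\inf_{p}p\,e^{2V/p}\lesssim 1+V$ (Lemma \ref{lem-take-log}), yielding $(1+V)$ for every $\alpha$ with no covering or chaining at all. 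The trichotomy in \eqref{eq.log-crease} comes entirely from the complementary range $-\log r\lesssim V$, which the paper treats not by chaining but by the losing estimates of Proposition \ref{prop-Lbmo} (built on the uniformly local $L^p$ propagation, Lemma \ref{lemma-p-alpha} and the Yudovich/Osgood bound of Proposition \ref{prop.local-infty}) when $\alpha\leq1$, and by the maximum principle together with ${\rm L^{\alpha}bmo}\hookrightarrow L^{\infty}$ when $\alpha>1$; this is also how the $\|u(t)\|_{L^\infty}$ term is actually controlled, whereas your Serfati-type velocity bound is only asserted and is not immediate for $\alpha\leq1$ since $\omega$ is then unbounded. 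To salvage your approach you would need to import the Whitney measure-decay estimate for the first gap and handle the range $-\log r\lesssim V$ by the paper's (or an equivalent) argument.
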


\begin{remark}
When  $\alpha=0$, Theorem \ref{prop.reg-pser} recovers  the
 result established in \cite{B-K-1}:
\begin{equation}\label{eq.bs-op}
\|\omega(t)\|_{{\rm bmo}(\RR^2)}\leq  C\big(1+V_{\rm {Lip}}(t)\big)\|\omega_0\|_{{\rm bmo}(\RR^2)}.
\end{equation}
It is more important that the authors in  \cite{B-K-1} showed that \eqref{eq.bs-op} is a sharp
estimate by  the property of $K$-quasi-conformal mapping and the
Whitney covering theorem. In this work, we  provide a simple proof for
\eqref{eq.bs-op}. Firstly, using the evolving property of
bi-Lipschitz flow, one can conclude for any $p\geq1$
\begin{equation*}
\|\omega(t)\|_{{\rm bmo}_p}\leq C\left(e^{(1+V_{\rm Lip}(t))}\right)^{\frac{2}{p}}\|\omega_0\|_{{\rm bmo}_p}.
\end{equation*}
In light of the H\"older inequality and Corollary \ref{Coro.JN} in
Appendix A, we have
\begin{align*}
\|\omega(t)\|_{{\rm bmo}}\leq\|\omega(t)\|_{{\rm bmo}_p}\leq &C\big(e^{(1+V_{\rm Lip}(t))}\big)^{\frac{2}{p}}\|\omega_0\|_{{\rm bmo}_p}\\
\leq &Cp\cdot\big(e^{(1+V_{\rm Lip}(t))}\big)^{\frac{2}{p}}\|\omega_0\|_{{\rm bmo}}.
\end{align*}
This estimate together with Lemma \ref{lem-take-log} yields
\begin{equation*}
\|\omega(t)\|_{{\rm bmo}(\RR^2)}\leq C\inf_{1\leq p<\infty}\Big(p\cdot e^{(1+V_{\rm Lip}(t))}\big)^{\frac{2}{p}}\Big)\|\omega_0\|_{{\rm bmo}(\RR^2)}\leq C\big(1+V_{\rm {Lip}}(t)\big)\|\omega_0\|_{{\rm bmo}(\RR^2)}.
\end{equation*}
\end{remark}
The paper is organized as follows. In Section \ref{Sec-PRE}, we review some useful statements
 on functional spaces and basic analysis tools, and introduce several technical lemmas.
 In the next section, we establish  some estimates with loss of regularity and logarithmic estimate of the solution for the transport equation
 with the vector field belonging to $\rm bmo$-type spaces. Section \ref{Sec-Proof}
 is devoted to the proof of our main theorems. Finally, we generalize the classical John-Nirenberg inequality
 and establish some product estimates and commutator estimates by using the Bony para-product decomposition.

\section{Preliminary}\label{Sec-PRE}
\subsection {Littlewood-Paley Theory and the functional spaces}

In this subsection, we first review the so-called Littlewood--Paley decomposition described, e.g., in \cite{BCD11,FS,MWZ2012}. Next, we introduce some useful functional spaces such as Morrey--Campanato space and its properties.

Let $(\chi,\varphi)$ be a couple of smooth functions with  values in $[0,1]$
such that $\chi$ is supported in the ball $\big\{\xi\in\mathbb{R}^{d}\big||\xi|\leq\frac{4}{3}\big\}$,
$\varphi$ is supported in the ring $\big\{\xi\in\mathbb{R}^{d}\,\big|\,\frac{3}{4}\leq|\xi|\leq\frac{8}{3}\big\}$ and
\begin{equation*}
    \chi(\xi)+\sum_{j\in \mathbb{N}}\varphi(2^{-j}\xi)=1\quad {\rm for \ each\ }\xi\in \mathbb{R}^{d}.
\end{equation*}
For any $u\in \mathcal{S}'(\mathbb{R}^{d})$, one  defines the
dyadic blocks as
\begin{equation*}
 \Delta_{-1}u=\chi(D)u\quad\text{and}\quad   {\Delta}_{j}u:=\varphi(2^{-j}D)u\quad {\rm for\ each\ }j\in\mathbb{N}.
\end{equation*}
We also define the following low-frequency cut-off:
\begin{equation*}
    {S}_{j}u:=\chi(2^{-j}D)u.
\end{equation*}
It is easy to verify that
\begin{equation*}
    u=\sum_{j\geq-1}{\Delta}_{j}u,\quad \text{in}\quad\mathcal{S}'(\mathbb{R}^{d})
\end{equation*}
which  is called the \emph{inhomogeneous Littlewood-Paley decomposition}.
It has nice properties of quasi-orthogonality:
\begin{equation*}
    {\Delta}_{j}{\Delta}_{j'}u\equiv 0\quad \text{if}\quad |j-j'|\geq 2.
\end{equation*}
\begin{equation*}
{\Delta}_{j}({S}_{j'-1}u{\Delta}_{j'}v)\equiv0\quad \text{if}\quad |j-j'|\geq5.
\end{equation*}
We shall also use the \emph{homogeneous Littlewood-Paley} operators as follows:
\begin{equation*}
    \dot{S}_{j}u:=\chi(2^{-j}D)u \quad\text{and}\quad\dot{\Delta}_{j}u:=\varphi(2^{-j}D)u\quad\text{for each}\,\,j\in\mathbb{Z}.
\end{equation*}
\begin{definition}
Let $\mathcal{S}_{h}'(\RR^d)$ be the space of tempered distributions $u$ such that
\begin{equation*}
    \lim_{j\rightarrow-\infty}\dot{S}_{j}u=0\quad \text{in} \quad \mathcal{S}'(\RR^d).
\end{equation*}
\end{definition}
\begin{remark} From this definition, we easily find that that for every $u\in\mathcal{S}_{h}'(\RR^d),$ we have
\[ \sum_{k<j}\dot{\Delta}_{k}u= \dot{S} _{j}u\quad\text{in the sense of distribution}.\]
One can show that for every nonconstant function  $u\in L^p(\mathbb R^d)$ ($1\le p\le\infty$),
  we have $u\in \mathcal{S}_{h}'(\RR^d)$. So, we do not  distinguish between $\sum_{k<j}\dot{\Delta}_{k}u$ and $ {\dot{S}}_{j}u$.
\end{remark}
\begin{definition} \label{def-2.5}
For every  $u, v\in\mathcal {S}_{h}^{\prime}(\RR^d)$,  the product $u\cdot
v$ has the Bony decomposition:
$$u\cdot v=\dot{T}_{u} v+\dot{T}_{v}u+\dot{R}(u,v),
$$
with the paraproduct term
$$\dot{T}_{u}v=\sum_{j\leq{k-2}}\dot{\Delta}_{j}u\dot{\Delta}_{k}v=\sum_{j}{\dot{S}_{j-1}u}{\dot{\Delta}_{j}v},$$
and with the remainder term
$$\dot{R}(u,v)=\sum_{j}\dot{\Delta}_{j}u\widetilde{\dot{\Delta}}_{j}v, \quad \widetilde{\dot{\Delta}}_{j}:=\sum_{k=-1}^1\dot{\Delta}_{j-k}.$$
\end{definition}
Now  we introduce the Bernstein lemma which will be useful
throughout this paper.
\begin{lemma}\label{bernstein}
Let $1\leq a\leq b\leq\infty$ and  $f\in L^a(\RR^d)$. Then there exists a positive constant $C$ such that for $q,\,k\in\NN$,
\begin{equation*}
\sup_{|\alpha|=k}\|\partial ^{\alpha}S_{q}f\|_{L^b(\RR^d)} \leq  C^k\,2^{q\big(k+d(\frac{1}{a}-\frac{1}{b})\big)}\|S_{q}f\|_{L^a(\RR^d)},
\end{equation*}
\begin{equation*}
C^{-k}2^{qk}\|{\Delta}_{q}f\|_{L^a(\RR^d)} \leq \sup_{|\alpha|=k}\|\partial ^{\alpha}{\Delta}_{q}f\|_{L^a(\RR^d)}\,\leq\,C^k2^{qk}\|{\Delta}_{q}f\|_{L^a(\RR^d)}.
\end{equation*}
\end{lemma}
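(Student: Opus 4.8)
The plan is to prove both families of inequalities by reducing each quantity $\partial^\alpha S_q f$ (resp.\ $\partial^\alpha \Delta_q f$) to a convolution with a rescaled, frequency-localized kernel and then invoking Young's convolution inequality; the entire difficulty lies in bookkeeping the dependence of the constants on the order $k=|\alpha|$, so as to obtain $C^k$ rather than $k!$-type growth. As a common preliminary I would record the elementary fact that for a smooth symbol $m$ supported in a \emph{fixed} ball or annulus one has $\|\mathcal{F}^{-1}m\|_{L^c(\RR^d)} \le C_N \sup_{|\beta|\le N}\|\partial^\beta m\|_{L^\infty}$ with $N=d+1$, obtained by writing $x^\beta\mathcal{F}^{-1}m=\mathcal{F}^{-1}[(i\partial_\xi)^\beta m]$ and summing over $|\beta|\le N$ to extract $\langle x\rangle^{-N}$ decay; interpolation between $c=1$ and $c=\infty$ then covers all $c$.

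For the first inequality, since $\widehat{S_qf}$ is supported in $\{|\xi|\le \tfrac43 2^q\}$, I fix $\tilde\chi\in C_0^\infty$ with $\tilde\chi\equiv1$ on $\{|\xi|\le \tfrac43\}$ so that $S_qf = \tilde\chi(2^{-q}D)S_qf$, and write $\partial^\alpha S_q f = h_q^\alpha * S_q f$ with $h_q^\alpha = \mathcal{F}^{-1}[(i\xi)^\alpha\tilde\chi(2^{-q}\xi)]$. Scaling gives $h_q^\alpha(x)=2^{q(d+k)}h^\alpha(2^qx)$ with $h^\alpha=\mathcal{F}^{-1}[(i\xi)^\alpha\tilde\chi]$, hence $\|h_q^\alpha\|_{L^c}=2^{q(k+d(1-1/c))}\|h^\alpha\|_{L^c}$. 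Choosing $c$ by $1+\tfrac1b=\tfrac1a+\tfrac1c$ (admissible since $a\le b$) and applying Young's inequality produces exactly the factor $2^{q(k+d(1/a-1/b))}$, because $1-\tfrac1c=\tfrac1a-\tfrac1b$. It then remains to bound $\|h^\alpha\|_{L^c}\le C^k$: as $(i\xi)^\alpha\tilde\chi$ is supported in a fixed ball of radius $R$, $\|(i\xi)^\alpha\tilde\chi\|_{L^\infty}\le R^k$, while each $\xi$-derivative raises the polynomial part's $L^\infty$ bound by at most a factor $k/R$, so $\sup_{|\beta|\le N}\|\partial^\beta((i\xi)^\alpha\tilde\chi)\|_{L^\infty}\le C_N k^N R^k$; since $k^N\le (e^N)^k$ this is $\le C^k$, and the preliminary yields $\|h^\alpha\|_{L^c}\le C^k$. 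The upper bound in the second line is the same argument with $\tilde\chi$ replaced by a cutoff $\tilde\varphi$ adapted to the annulus and with $c=1$ (so $a=b$).

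The reverse inequality in the second line is the only genuinely new point. I would exploit that $\widehat{\Delta_qf}$ lives in the annulus $\{\tfrac34 2^q\le|\xi|\le\tfrac83 2^q\}$, away from the origin, to invert the derivatives. Starting from the algebraic identity $|\xi|^{2k}=\sum_{|\alpha|=k}\tfrac{k!}{\alpha!}\xi^{2\alpha}$ and a cutoff $\Xi$ equal to $1$ on the annulus, one gets on the support of $\widehat{\Delta_q f}$
\begin{equation*}
\widehat{\Delta_q f}=\sum_{|\alpha|=k}\frac{k!}{\alpha!}\,i^{-k}\,\frac{\xi^\alpha\,\Xi(2^{-q}\xi)}{|\xi|^{2k}}\,\widehat{\partial^\alpha\Delta_q f},
\end{equation*}
whence $\Delta_q f=\sum_{|\alpha|=k}\tfrac{k!}{\alpha!}i^{-k}K_{\alpha,q}*\partial^\alpha\Delta_q f$ with $K_{\alpha,q}=\mathcal{F}^{-1}[\xi^\alpha\Xi(2^{-q}\xi)|\xi|^{-2k}]$. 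Scaling shows $\|K_{\alpha,q}\|_{L^1}=2^{-qk}\|K_\alpha\|_{L^1}$ where $K_\alpha=\mathcal{F}^{-1}[\xi^\alpha\Xi|\xi|^{-2k}]$, and Young's inequality with $c=1$ gives $\|\Delta_q f\|_{L^a}\le 2^{-qk}\big(\sum_{|\alpha|=k}\tfrac{k!}{\alpha!}\|K_\alpha\|_{L^1}\big)\sup_{|\alpha|=k}\|\partial^\alpha\Delta_q f\|_{L^a}$. Using $\sum_{|\alpha|=k}\tfrac{k!}{\alpha!}=d^k$ together with a uniform bound $\|K_\alpha\|_{L^1}\le C_0^k$ then yields the constant $C^k=(d\,C_0)^k$, completing the lower bound.

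The hard part will be this last uniform bound $\|K_\alpha\|_{L^1}\le C_0^k$, where the symbol carries the negative power $|\xi|^{-2k}$: on the fixed annulus $|\xi^\alpha\Xi|\le c_2^k$ and $|\xi|^{-2k}\le c_1^{-2k}$ give the $L^\infty$ bound $(c_2 c_1^{-2})^k$, but each $\xi$-derivative of $|\xi|^{-2k}$ brings a factor proportional to $k$, so over the $N=d+1$ derivatives needed for the $\langle x\rangle^{-N}$ decay one accumulates a factor $k^N$. The crux is precisely that this stays \emph{polynomial} in $k$, hence $k^N\le C^k$, rather than growing factorially. Once this bookkeeping is confirmed the preliminary estimate closes the argument, and the passage from $\sup$ to $\sum$ (and back) costs only the factor $\#\{|\alpha|=k\}\le(k+1)^{d-1}\le C^k$.
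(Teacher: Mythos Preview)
The paper states this lemma without proof: it is the classical Bernstein lemma, introduced in the preliminaries as a well-known tool and taken as given (see the standard references \cite{BCD11,MWZ2012} cited there). There is therefore no ``paper's own proof'' to compare against.

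Your argument is correct and is essentially the standard proof. The first inequality and the upper bound in the second are handled cleanly via Young's inequality after rescaling a fixed compactly supported multiplier; your tracking of the $C^k$ dependence through the factor $k^N\le (e^N)^k$ is the right mechanism. For the lower bound, the multinomial identity $|\xi|^{2k}=\sum_{|\alpha|=k}\tfrac{k!}{\alpha!}\xi^{2\alpha}$ is exactly the usual device to invert derivatives on the annulus, and your observation that differentiating $|\xi|^{-2k}$ up to order $N=d+1$ produces only polynomial growth $O(k^N)$ in $k$ (from the falling factorial $k(k+1)\cdots(k+N-1)$) is what keeps the constant exponential in $k$ rather than factorial. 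The combinatorial bookkeeping $\sum_{|\alpha|=k}\tfrac{k!}{\alpha!}=d^k$ and $\#\{|\alpha|=k\}\le C^k$ is also fine. Nothing is missing.
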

\begin{definition}\label{def2.2}
Let $s\in \mathbb{R}$, $(p,q)\in [1,\infty]^{2}$ and $u\in \mathcal{S}'(\mathbb{R}^{d})$. Then we define the \emph{inhomogeneous Besov spaces} as
\begin{equation*}
   {B}^{s}_{p,q}(\mathbb{R}^{d}):=\big\{u\in\mathcal{S}'(\mathbb{R}^{d})\big|\norm{u}_{{B}^{s}_{p,q}(\mathbb{R}^{d})}<\infty\big\},
\end{equation*}
where
\begin{equation*}
    \norm{u}_{{B}^{s}_{p,q}(\mathbb{R}^{d})}:=\begin{cases}
    \Big(\sum_{j\geq-1}2^{jsq}\norm{{\Delta}_{j}u}_{L^{p}(\mathbb{R}^{d})}^{q}\Big)^{\frac{1}{q}}
    \quad&\text{if}\quad q<\infty,\\
   \sup_{j\geq-1}2^{js}\norm{{\Delta}_{j}u}_{L^{p}(\mathbb{R}^{d})}\quad&\text{if}\quad q=\infty.
    \end{cases}
\end{equation*}
\end{definition}
Next, we review  statements of the weighted Morrey-Campanato space and give its useful properties.
\begin{definition}\label{def2.24}
Let $\alpha\in [0,\infty[$, $p\in[1,\infty[$ and the scalar function $f$ is locally integrable. If
\begin{align*}
&\|f\|_{{\rm L^\alpha BMO}_p(\RR^d)}\\
:=&\sup_{x\in\RR^d,\,r\in]0,1[}\big(-\log r\big)^\alpha\Big(\frac{1}{m(B_r(x))}\int_{B_r(x)}\left|f(y)-\Avg_{B_r(x)}(f)\right|^p\intd y\Big)^{\frac1p}\\
&+\sup_{x\in\RR^d,\,r\geq1}\Big(\frac{1}{m(B_r(x))}\int_{B_r(x)}\left|f(y)-\Avg_{B_r(x)}(f)\right|^p\intd y\Big)^{\frac1p}
<\infty,
\end{align*}
then we say that $f\in {\rm L^\alpha BMO}_p(\RR^d)$. In the following, we denote ${\rm L^\alpha BMO}_1(\RR^d)$ by ${\rm L^\alpha BMO}(\RR^d)$.
\end{definition}
\begin{definition}\label{def2.25}
Let $\alpha\in [-1,\infty[$, $p\in[1,\infty[$ and the scalar function $f$ is a locally integrable. If
\begin{align*}
&\|f\|_{{\rm L^\alpha bmo}_p(\RR^d)}\\
:=&\sup_{x\in\RR^d,\,r\in]0,1[}\big(-\log r\big)^\alpha\Big(\frac{1}{m(B_r(x))}\int_{B_r(x)}\left|f(y)-\Avg_{B_r(x)}(f)\right|^p\intd y\Big)^{\frac1p}\\
&+ \sup_{x\in\RR^d}\int_{B_1(x)}\left|f(y)\right|\intd y<\infty,
\end{align*}
then we say that $f\in {\rm L^\alpha bmo}_p(\RR^d)$. Here, we denote ${\rm L^\alpha bmo}_1(\RR^d)$ by ${\rm L^\alpha bmo}(\RR^d)$.
\end{definition}
It is worthwhile to remark that ${\rm L^\alpha bmo}(\RR^d)$ is equivalent to ${\rm L^\alpha bmo}_p$ for all $\alpha\geq0$ and $p>1$ by Corollary \ref{Coro.JN}.
Next, we state basic properties of the space ${\rm L^\alpha bmo}(\RR^d)$  which will be used in the following sections.
\begin{proposition}\label{prop-lbmo-properties}
\begin{enumerate}
\item[\rm (1)] \label{eq.item-0} For $\alpha_1\geq\alpha_2$, we have ${\rm L^{\alpha_1} bmo}(\RR^d)\hookrightarrow{\rm L^{\alpha_2}bmo}(\RR^d)$.
  \item[\rm (2)] \label{eq.item-1} ${\rm L^\alpha bmo}(\RR^d)$  is a Banach space for any $\alpha\geq0$.
  \item[\rm (3)]\label{eq.item-2} If $\alpha\in]0,\infty[$, then, for all $q>\frac1\alpha$, we have that ${\rm L^\alpha bmo}(\RR^d)$ continuously embeds $B^0_{\infty,q}(\RR^d)$. In particular, ${\rm bmo}(\RR^d)\hookrightarrow B^{0}_{\infty,\infty}(\RR^d)$.
  \item[\rm(4)] \label{eq.item-4}For every  $f\in L^1(\RR^d)$ and $g\in {\rm L^\alpha bmo}(\RR^d)$ with $\alpha\geq0$, one has
\begin{equation*}
\|f\ast g\|_{{\rm L^\alpha bmo}(\RR^d)}\leq \|f\|_{L^1(\RR^d)}\|g\|_{{\rm L^\alpha bmo}(\RR^d)}.
\end{equation*}
\end{enumerate}
\end{proposition}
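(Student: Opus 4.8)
I would treat the four items in order of increasing difficulty: (iv), then (i), (ii), and finally (iii), which is the only genuinely delicate point.

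For (iv) the idea is to commute the convolution past the oscillation. Since $f\in L^1$ and $g\in L^1_{\rm loc}$, Fubini gives $\Avg_{B_r(x)}(f\ast g)=\int_{\RR^d}f(z)\,\Avg_{B_r(x)}\big(g(\cdot-z)\big)\intd z$, whence for every ball $B_r(x)$ one has $f\ast g(y)-\Avg_{B_r(x)}(f\ast g)=\int f(z)\big(g(y-z)-\Avg_{B_r(x)}g(\cdot-z)\big)\intd z$. Taking the normalized $L^1$ norm over $B_r(x)$ and using the triangle inequality (Minkowski's integral inequality), the oscillation of $f\ast g$ over $B_r(x)$ is at most $\int|f(z)|$ times the oscillation of $g$ over the translated ball $B_r(x-z)$; multiplying by $(-\log r)^\alpha$ for $r\in]0,1[$ and taking the supremum over $x$ and $r$ bounds the first part of $\|f\ast g\|_{{\rm L^\alpha bmo}}$ by $\|f\|_{L^1}\|g\|_{{\rm L^\alpha bmo}}$. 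The local $L^1$ part is handled by the same translation trick together with Fubini: $\sup_x\int_{B_1(x)}|f\ast g|\le\int|f(z)|\big(\sup_{x'}\int_{B_1(x')}|g|\big)\intd z\le\|f\|_{L^1}\|g\|_{{\rm L^\alpha bmo}}$. Adding the two contributions proves (iv).

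For (i), I would split the supremum defining the $(-\log r)^{\alpha_2}$-weighted oscillation into the ranges $r\in]0,e^{-1}]$ and $r\in]e^{-1},1[$. On the first range $-\log r\ge1$, so $(-\log r)^{\alpha_2}\le(-\log r)^{\alpha_1}$ and this part is dominated by $\|g\|_{{\rm L^{\alpha_1}bmo}}$. On the second range $(-\log r)^{\alpha_2}\le1$ while $m(B_r(x))\ge m(B_{e^{-1}})=:c_d>0$, so the oscillation over $B_r(x)\subset B_1(x)$ is at most $2c_d^{-1}\int_{B_1(x)}|g|\le C\|g\|_{{\rm L^{\alpha_1}bmo}}$; since the local $L^1$ terms of the two norms coincide, (i) follows. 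For (ii), the term $\sup_x\int_{B_1(x)}|g|$ shows $\|\cdot\|_{{\rm L^\alpha bmo}}$ is a genuine norm. Given a Cauchy sequence $(g_n)$, the inequality $\sup_x\int_{B_1(x)}|g_n-g_m|\le\|g_n-g_m\|_{{\rm L^\alpha bmo}}$ makes $(g_n)$ Cauchy in $L^1(B_1(x))$ for each $x$, hence in $L^1_{\rm loc}(\RR^d)$; call $g$ the limit. Each functional occurring in the definition (an oscillation over a fixed ball, or $\int_{B_1(x)}|\cdot|$) is continuous under $L^1_{\rm loc}$-convergence, so letting $m\to\infty$ in $\|g_n-g_m\|_{{\rm L^\alpha bmo}}\le\eps$ (valid for $n\ge N(\eps)$, $m\ge n$) yields $\|g_n-g\|_{{\rm L^\alpha bmo}}\le\eps$; in particular $g\in{\rm L^\alpha bmo}$ and $g_n\to g$.

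For (iii), the heart of the matter is the frequency-localized estimate $\|\Delta_j g\|_{L^\infty}\le C(1+j)^{-\alpha}\|g\|_{{\rm L^\alpha bmo}}$ for $j\ge0$, together with $\|\Delta_{-1}g\|_{L^\infty}\le C\|g\|_{{\rm L^\alpha bmo}}$, the latter being immediate from the Schwartz decay of the kernel of $\chi(D)$ and the local $L^1$ control. For the former, since the kernel of $\Delta_j$ has vanishing integral one writes $\Delta_j g(x)=\int\varphi_j(x-y)\big(g(y)-\Avg_{B_{2^{-j}}(x)}g\big)\intd y$, partitions $\RR^d$ into the dyadic shells $\{|x-y|\sim2^{-j+k}\}$, $k\ge0$, uses $|\varphi_j(x-y)|\le C_N2^{jd}2^{-kN}$ there, and bounds $\int_{|x-y|\sim2^{-j+k}}|g(y)-\Avg_{B_{2^{-j}}(x)}g|$ by the standard telescoping between the averages on $B_{2^{-j}}(x)$ and $B_{2^{-j+k}}(x)$: this costs $\lesssim2^{(-j+k)d}$ times $\sum_{\ell=0}^{k}\mathrm{Osc}_{B_{2^{-j+\ell}}(x)}(g)$, where each oscillation is $\le C\|g\|_{{\rm L^\alpha bmo}}$ always and $\le C\big((j-\ell)\log 2\big)^{-\alpha}\|g\|_{{\rm L^\alpha bmo}}$ when $2^{-j+\ell}<1$. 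Choosing $N$ large makes the resulting double series converge, and separating the shells $k\le j/2$ (where the weight gives the $(1+j)^{-\alpha}$ gain) from $k>j/2$ (where the factor $2^{-k(N-d)}$ is super-polynomially small in $j$) yields the claimed bound. Then $\|g\|_{B^0_{\infty,q}}^q=\sum_{j\ge-1}\|\Delta_j g\|_{L^\infty}^q\le C\big(1+\sum_{j\ge1}(1+j)^{-\alpha q}\big)\|g\|_{{\rm L^\alpha bmo}}^q$, and the series converges precisely for $\alpha q>1$; the endpoint ${\rm bmo}\hookrightarrow B^0_{\infty,\infty}$ is the case $\alpha=0$, $q=\infty$, where only the uniform bound $\sup_j\|\Delta_j g\|_{L^\infty}\le C\|g\|_{{\rm bmo}}$ is needed. (One may streamline the telescoping by first passing, via the generalized John--Nirenberg inequality Corollary \ref{Coro.JN}, to an $L^p$ oscillation and invoking Bernstein's Lemma \ref{bernstein}, but the shell-by-shell analysis above is still required.) The telescoping/kernel-tail step in (iii) is the main obstacle; items (i), (ii), (iv) are routine.
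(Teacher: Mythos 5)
Your proposal is correct, but it takes a more self-contained route than the paper, principally in item \eqref{eq.item-2}. The paper disposes of \eqref{eq.item-0} as obvious, proves \eqref{eq.item-1} by noting that a Cauchy sequence in ${\rm L^\alpha bmo}$ is Cauchy in ${\rm bmo}$, hence convergent in $L^1_{\rm loc}$, and then upgrades the convergence by the definition of the norm; and for \eqref{eq.item-2} it simply quotes the frequency-localized bound $\|\Delta_k f\|_{L^\infty}\lesssim k^{-\alpha}\|f\|_{\rm L^\alpha bmo}$ from \cite[Proposition 1]{B-H13}, handling the blocks $\Delta_{-1},\Delta_0$ with Lemma \ref{lemma-young}, and then sums the series $\sum_k k^{-\alpha q}$ exactly as you do; item \eqref{eq.item-4} is dismissed by shift-invariance, which is the abstract form of your Fubini--Minkowski computation. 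What you do differently is (a) prove the key dyadic estimate yourself, via the vanishing moment of the $\Delta_j$-kernel, decomposition into dyadic shells with Schwartz-tail bounds $C_N2^{jd}2^{-kN}$, and telescoping of averages between the scales $2^{-j}$ and $2^{-j+k}$ --- this is essentially the argument behind the cited result, so your write-up makes the proposition independent of \cite{B-H13} at the cost of a page of routine but careful bookkeeping (including the observation, which you should state explicitly, that oscillations over balls of radius $\geq 1$ are controlled by covering with unit balls and the local $L^1$ part of the norm); and (b) in \eqref{eq.item-1} you bypass the completeness of ${\rm bmo}$ by extracting $L^1_{\rm loc}$-Cauchyness directly from the term $\sup_x\int_{B_1(x)}|\cdot|$, which is slightly more elementary than the paper's reduction. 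Your two-range treatment of \eqref{eq.item-0} (splitting at $r=e^{-1}$, since for $r$ near $1$ the weight $(-\log r)^{\alpha_2}$ exceeds $(-\log r)^{\alpha_1}$) is a genuine point that the paper glosses over, and is correct.
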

\begin{proof}
(1) is obvious.

(2).  It is well-known that ${\rm bmo}(\RR^d)$ is a Banach space (see for example \cite{LRbook}). So we just need to show  ${\rm L^\alpha bmo}(\RR^d)$  is a Banach space for any $\alpha>0$. Let the family $\{f_n\}_n$   be a Cauchy sequence in ${\rm L^\alpha bmo}(\RR^d)$. Since ${\rm bmo}(\RR^d)$
is complete, we know that the sequence $\{f_n\}_n$  converges in  ${\rm bmo}(\RR^d)$ and then in $L^1_{\text{loc}}(\RR^d)$.
According to the definition of space and the convergence in  $L^1_{\text{loc}}(\RR^d)$, we immediately get that the convergence holds in  ${\rm L^\alpha bmo}(\RR^d)$. This shows completeness of space ${\rm L^\alpha bmo}(\RR^d)$ for all $\alpha>0$.

(3).  For each $f\in {\rm L^\alpha bmo}(\RR^d)$, by
Lemma \ref{lemma-young} and \cite[Proposition 1]{B-H13}, we can
conclude that
\begin{align*}
\|f\|_{B^0_{\infty,q}(\RR^d)}=&\Big(\sum_{k\geq1}\|\Delta_kf\|^q_{L^\infty(\RR^d)}\Big)^{\frac1q}+\|\Delta_{0}f\|_{L^\infty(\RR^d)}+\|\Delta_{-1}f\|_{L^\infty(\RR^d)}\\
\leq&\Big(\sum_{k\geq1}k^{-\alpha q}\|f\|_{{\rm L^\alpha bmo}(\RR^d)}^q\Big)^{\frac1q}+C\|f\|_{1,\,1}\\
\leq&\|f\|_{{\rm L^\alpha bmo}(\RR^d)}\Big(\sum_{k\geq1}k^{-\alpha q}\Big)^{\frac1q}+C\|f\|_{{\rm L^\alpha bmo}(\RR^d)}.
\end{align*}
Since $q\alpha>1$,  the series $\sum_{k\geq1}k^{-\alpha q}$ is
convergent.


(4) Since the space ${\rm L^\alpha bmo}(\RR^d)$ is  a
shift-invariant space,  we obtain that ${\rm L^\alpha bmo}(\RR^d)$ is stable
through convolution with functions in $L^1(\RR^d)$ by   \cite[Proposition 4.1]{LRbook}.
\end{proof}
Next, we  introduce the space which is an important generalization  of Campanato spaces.
 This space was firstly studied by Spanne \cite{Spanne}, see also \cite{Func}  for more details.
\begin{definition}\label{def-spanne}
Let $\varphi$ be a positive non-increasing function.  We define  the Spanne space $\mathcal{M}_{\varphi}(\RR^d)$ of all integrable functions $f$ such that $\|f\|_{\mathcal{M}_{\varphi}(\RR^d)}<\infty$, where its norm $\|\cdot\|_{\mathcal{M}_{\varphi}(\RR^d)}$ is defined as follows
\begin{align*}
\|f\|_{\mathcal{M}_{\varphi}(\RR^d)}=&\sup_{x\in\RR^d,\,r\in]0,\frac12[}\frac{1}{\varphi(r)m\left(B_r(x)\right)}\int_{B_r(x)}|f(y)| \intd y\\
 &+\sup_{x\in\RR^d,\,r\geq\frac12 }\frac{1}{m\left(B_r(x)\right)}\int_{B_r(x)}|f(y)| \intd y.
\end{align*}
\end{definition}
Let us remark that the Spanne space $\mathcal{M}_{\varphi}(\RR^d)$ coincides with $L^\infty(\RR^d)$ when $\varphi$ is a constant function.
\subsection{Some useful lemmas and propositions} In this subsection, we give certain useful technical lemmas and propositions which are the cornerstones in our analysis.

\begin{lemma}[\cite{Tan04}]\label{lemma-young}
\begin{enumerate}
  \item[\rm(1)]\; Let $\varphi\in \mathcal{S}(\mathbb{R}^2)$, then there holds
\begin{equation*}
\big\|\varphi\ast f\|_{L^\infty(\RR^2)}\leq C\|f\|_{1,\,1},\quad \forall\, f\in L^1_{\rm ul}(\RR^2),
\end{equation*}
where $C$ is a positive constant independent of $f$.
  \item [\rm (2)]\;If $m\geq1$, then
\begin{equation*}
\|f\|_{q,\,m\lambda}\leq (2m^{2})^{\frac1q}\|f\|_{q,\,\lambda},\quad\forall\,f\in L^q_{\rm ul}(\RR^2)\quad \text{and}\quad \forall\,\lambda>0.
\end{equation*}
\end{enumerate}
\end{lemma}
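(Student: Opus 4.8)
The plan is to prove part (ii) first, since it is a purely geometric covering estimate, and then to deduce part (i) from it by exploiting the rapid decay of the Schwartz kernel. The common engine for both parts is that the uniformly local norm $\|f\|_{q,\,\lambda}=\sup_{z}\big(\int_{B_\lambda(z)}|f|^q\big)^{1/q}$ controls the $L^q$-mass of $f$ over \emph{any} ball of radius $\lambda$, so that integrals over larger balls can be bounded by the number of small balls needed to cover them.

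For (ii), fix $\lambda>0$, $m\geq1$ and an arbitrary centre $x\in\RR^2$. The task reduces to a covering count: I would cover $B_{m\lambda}(x)$ by a controlled number $N$ of balls of radius $\lambda$, say $B_{m\lambda}(x)\subseteq\bigcup_{j=1}^{N}B_\lambda(x_j)$ with $N\leq 2m^2$. Choosing the centres $x_j$ on the rescaled lattice $x+\lambda\ZZ^2$ (or, optimally, on a triangular lattice) and discarding the cells that miss $B_{m\lambda}(x)$ produces such a covering, because each lattice cell has diameter comparable to $\lambda$ and is therefore contained in a ball of radius $\lambda$ about a nearby centre. Once the covering is in hand, subadditivity of the integral and the definition of $\|\cdot\|_{q,\,\lambda}$ give
\begin{equation*}
\int_{B_{m\lambda}(x)}|f(y)|^{q}\intd y\leq\sum_{j=1}^{N}\int_{B_\lambda(x_j)}|f(y)|^{q}\intd y\leq N\,\|f\|_{q,\,\lambda}^{q}\leq 2m^{2}\|f\|_{q,\,\lambda}^{q}.
\end{equation*}
Taking the supremum over $x$ and then $q$-th roots yields the claimed $\|f\|_{q,\,m\lambda}\leq(2m^{2})^{1/q}\|f\|_{q,\,\lambda}$.

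For (i), I would write $(\varphi\ast f)(x)=\int_{\RR^2}\varphi(x-y)f(y)\intd y$ and split $\RR^2$ into the annuli $A_k(x)=\{y:\,k\leq|x-y|<k+1\}$, $k\geq0$. On $A_k(x)$ the Schwartz bound gives $|\varphi(x-y)|\leq C_N(1+k)^{-N}$ for any $N$, while part (ii) applied with $q=1$, $\lambda=1$ and $m=k+1$ yields $\int_{A_k(x)}|f|\leq\int_{B_{k+1}(x)}|f|\leq 2(k+1)^{2}\|f\|_{1,\,1}$. Summing over $k$,
\begin{equation*}
|(\varphi\ast f)(x)|\leq\sum_{k\geq0}C_N(1+k)^{-N}\cdot 2(k+1)^{2}\|f\|_{1,\,1}=C\,\|f\|_{1,\,1},
\end{equation*}
where the series $\sum_{k\geq0}(k+1)^{2-N}$ converges once $N\geq4$. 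Taking the supremum over $x$ finishes (i); note that this route makes (i) a clean consequence of (ii).

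The only genuinely delicate point is the exact covering count in (ii). The naive axis-aligned lattice covering is slightly wasteful (its count behaves like $\pi m^2$ for large $m$), so obtaining the clean uniform constant $2m^2$ for all $m\geq1$ requires a careful bookkeeping of which cells actually meet $B_{m\lambda}(x)$ and of the boundary contribution. An alternative that avoids explicit geometry is a Fubini/averaging argument: integrating the pointwise bound $\int_{B_\lambda(z)}|f|^{q}\leq\|f\|_{q,\,\lambda}^{q}$ over $z\in B_{(m+1)\lambda}(x)$ and observing that $B_\lambda(y)\subseteq B_{(m+1)\lambda}(x)$ for every $y\in B_{m\lambda}(x)$ gives $\int_{B_{m\lambda}(x)}|f|^{q}\leq(m+1)^{2}\|f\|_{q,\,\lambda}^{q}$, a constant of the same order; matching the stated $2m^2$ then amounts to optimizing this bookkeeping. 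Everything else—subadditivity, the definition of the uniformly local norm, and the convergence of a superpolynomially decaying series—is routine.
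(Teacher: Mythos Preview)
The paper does not supply its own proof of this lemma; it is quoted verbatim from \cite{Tan04} and used as a black box. So there is nothing in the paper to compare your argument against.

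Your approach is sound and standard. The annulus decomposition for (i), combined with the polynomial growth bound from (ii) and the super-polynomial decay of a Schwartz kernel, is exactly the right mechanism, and your bookkeeping there is correct. For (ii), the covering/Fubini strategy is the natural one and yields a constant of order $m^{2/q}$, which is all that is ever used downstream in the paper (e.g.\ in Proposition~\ref{prop-loc-p} and Lemma~\ref{local-bern}). You are right that pinning down the \emph{exact} constant $(2m^2)^{1/q}$ is the only nontrivial point: your Fubini variant gives $(m+1)^2$ rather than $2m^2$, and the naive square-grid covering gives $\lceil m\sqrt{2}\rceil^2$, both of which exceed $2m^2$ for $m$ near $1$. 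Since the applications only need $Cm^{2/q}$ with a universal $C$, this discrepancy is cosmetic; if you want the stated constant on the nose you would have to consult \cite{Tan04} directly.
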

\begin{lemma}\label{local-bern}
Let $1\leq q\leq\infty$, $j\in\mathbb{N}$ and $\|f\|_{q,\,2^{-j}}<\infty$. Then there holds
\begin{equation}
\|\Delta_jf\|_{L^\infty(\RR^2)}\leq C2^{\frac{2j}{q}}\|f\|_{q,\,2^{-j}},
\end{equation}
where $C$ is a positive constant  independent of $q,\,j$ and $f$.
\end{lemma}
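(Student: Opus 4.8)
The plan is to realize $\Delta_j$ as convolution with a rescaled Schwartz kernel and then exploit the rapid decay of that kernel to localize the mass at scale $2^{-j}$. Write $\Delta_j f = h_j\ast f$, where $h_j(z)=2^{2j}h(2^jz)$ and $h:=\mathcal{F}^{-1}\varphi\in\mathcal{S}(\RR^2)$; since $\|f\|_{q,\,2^{-j}}<\infty$ forces $f\in L^q_{\rm ul}(\RR^2)\subset L^1_{\rm loc}(\RR^2)$, the convolution is well defined pointwise and coincides with $\varphi(2^{-j}D)f$. For a fixed $x\in\RR^2$ I would decompose $\RR^2$ into the ball $C_0:=B_{2^{-j}}(x)$ and the dyadic annuli $C_k:=B_{2^{-j+k}}(x)\setminus B_{2^{-j+k-1}}(x)$ for $k\geq1$, and estimate $\int_{C_k}h_j(x-y)f(y)\intd y$ separately by H\"older's inequality with exponents $q$ and $q'$.

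On $C_k$ the Schwartz bound $|h(z)|\leq C_N(1+|z|)^{-N}$ gives, after rescaling and using $m(C_k)\lesssim 2^{-2j+2k}$, the estimate $\|h_j\|_{L^{q'}(C_k)}\leq C_N\,2^{2j/q}\,2^{-(k-1)N+2k/q'}$, while Lemma \ref{lemma-young}(ii) applied with $\lambda=2^{-j}$ and $m=2^k$ yields $\|f\|_{L^q(C_k)}\leq\|f\|_{q,\,2^{-j+k}}\leq 2\cdot 2^{2k/q}\|f\|_{q,\,2^{-j}}$. Multiplying, the two gain factors combine to $2^{2k/q'}\cdot 2^{2k/q}=2^{2k}$, so the choice $N=3$ produces a geometric factor $\lesssim 2^{-k}$ and $\sum_{k\geq1}$ converges. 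The inner piece $C_0$ is handled directly: $\|h_j\|_{L^{q'}(\RR^2)}=2^{2j/q}\|h\|_{L^{q'}(\RR^2)}\leq C\,2^{2j/q}$ and $\|f\|_{L^q(C_0)}\leq\|f\|_{q,\,2^{-j}}$. Summing all pieces gives $|\Delta_jf(x)|\leq C\,2^{2j/q}\|f\|_{q,\,2^{-j}}$, and taking the supremum over $x$ finishes the proof. No cancellation of $h$ is used, only its decay.

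The one point requiring care — and the real content of the statement — is the \emph{uniformity} of $C$ in $q\in[1,\infty]$. This holds because: (i) $\|h\|_{L^{q'}(\RR^2)}\leq\max(\|h\|_{L^1},\|h\|_{L^\infty})$ independently of $q'$ by interpolation; (ii) the Schwartz constant $C_N$ is fixed once $N$ is chosen; (iii) $2^{2k/q'}\leq 2^{2k}$ and $(2\cdot 2^{2k})^{1/q}\leq 2\cdot 2^{2k}$ hold uniformly in $q$; and (iv) in dimension $2$ the relation $1/q+1/q'=1$ is precisely what turns the competing powers into exactly $2^{2k}$, still beaten by $2^{-(k-1)N}$ with $N=3$. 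The endpoint $q=\infty$ (so $q'=1$) is included and slightly simpler, recovering the usual Bernstein inequality $\|\Delta_jf\|_{L^\infty}\lesssim\|f\|_{L^\infty}$. I expect no other obstacle; the argument is entirely elementary once the kernel splitting and the scaling of $\|f\|_{q,\,\cdot}$ via Lemma \ref{lemma-young}(ii) are in place.
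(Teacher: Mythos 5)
Your proof is correct, including the point that actually matters here, the uniformity of the constant in $q$; but it takes a different route from the paper. The paper argues by pure rescaling: setting $f_j(x)=f(x/2^{j})$ one has $\dot\Delta_jf(x)=\dot\Delta_0f_j(2^{j}x)$, so the unit-scale estimate of Lemma \ref{lemma-young}(i) (quoted from Taniuchi) together with H\"older on unit balls gives $\|\dot\Delta_jf\|_{L^\infty}\leq C\|f(\cdot/2^{j})\|_{q,\,1}$, and the factor $2^{2j/q}$ then appears solely from undoing the dilation in $\|f(\cdot/2^{j})\|_{q,\,1}\leq 2^{2j/q}\|f\|_{q,\,2^{-j}}$. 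You instead work directly at scale $2^{-j}$: you split the convolution kernel over dyadic annuli, use H\"older with exponents $(q,q')$ on each annulus, control $\|f\|_{q,\,2^{-j+k}}$ via Lemma \ref{lemma-young}(ii), and beat the resulting $2^{2k}$ growth by the Schwartz decay with $N=3$. In effect you reprove the content of Lemma \ref{lemma-young}(i) rather than invoke it, so your argument is longer but self-contained and makes the $q$-uniformity of the constant fully explicit, whereas the paper's proof is shorter by outsourcing the unit-scale bound and keeping the scaling bookkeeping to one line. Both arguments are valid and give the same estimate.
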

\begin{proof}
Changing  variables, one concludes that
\begin{equation*}
|\dot\Delta_jf(x)|=\Big|2^{2j}\int_{\mathbb{R}^2}\varphi\big(2^{j}y\big)f(x-y)\intd y\Big|=\Big|\int_{\mathbb{R}^2}\varphi(y)f\Big(\frac{2^{j}x-y}{2^{j}}\Big)\intd y\Big|.
\end{equation*}
For $f_j(x):=f(x/2^{j})$,  we have
\begin{equation*}
|\dot\Delta_jf(x)|=\Big|\int_{\mathbb{R}^2}\varphi(y)f_j(2^{j}x-y)\intd y\Big|=|\dot\Delta_0f_j(2^{j}x)|.
\end{equation*}
Using Lemma \ref{lemma-young}, we see
\begin{equation*}
\|\dot\Delta_0f_j\|_{L^\infty(\RR^2)}\leq C\|f_j\|_{1,\,1}=C\|f(\cdot/2^j)\|_{1,\,1}\leq C\|f(\cdot/2^j)\|_{q,\,1}.
\end{equation*}
This implies
\begin{equation}\label{eq.-local-1}
\|\dot\Delta_jf\|_{L^\infty(\RR^2)}\leq C\|f(\cdot/2^j)\|_{q,\,1}.
\end{equation}
Clearly,
\begin{align*}
\Big(\int_{|x-y|\leq1}\Big|f\Big(\frac{y}{2^{j}}\Big)\Big|^{q}\intd y\Big)^{\frac{1}{q}}
=&\Big(2^{2j}\int_{|2^{-j}x-y|\leq2^{-j}}\big|f(y)\big|^{q}\intd y\Big)^{\frac{1}{q}}\nonumber\\
\leq&2^{\frac{2j}{q}}\|f\|_{q,\,2^{-j}}.
\end{align*}
Inserting this inequality into \eqref{eq.-local-1} yields the desired result.
\end{proof}
\begin{remark}
From Lemma \ref{lemma-young} and Lemma \ref{local-bern}, it follows that
\begin{equation}\label{eq.local-bern}
\|\Delta_jf\|_{L^\infty(\RR^2)}\leq \begin{cases}
C2^{\frac{2j}{q}}\|f\|_{q,\,1},\quad &\forall\,j\geq0;\\
C\|f\|_{q,\,1},\quad &\forall\,j<0.
\end{cases}
\end{equation}
\end{remark}
\begin{lemma}\label{lem-take-log}
For  $\alpha\in]0,1]$,  we  take $\phi(p)=p^\alpha$ in Definition \ref{def-take-log}. Then $\Phi(a)$ is given by
\begin{equation*}
\Phi(a)=(T\phi)(a)=\begin{cases}
\left(\frac{e}{\alpha}\right)^\alpha\cdot(\log a)^\alpha,\quad&\text{if}\quad a>e^{\alpha p_0};\\
p_0^{\alpha}a^{\frac{1}{p_0}},\quad&\text{if}\quad 1\leq a\leq e^{\alpha p_0};\\
p_0^{\alpha},\quad&\text{if}\quad 0\leq a<1.
\end{cases}
\end{equation*}
\end{lemma}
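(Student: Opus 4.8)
The plan is to compute $\Phi(a)=(T\phi)(a)$ directly from Definition \ref{def-take-log} with $\phi(p)=p^\alpha$ by a one-variable minimization. For $a<1$ the formula reads $\Phi(a)=\inf_{p\ge p_0}p^\alpha=p_0^\alpha$, since $p\mapsto p^\alpha$ is increasing; that disposes of the third case. For $a\ge1$ we must minimize $g(p):=a^{1/p}p^\alpha$ over $p\in[p_0,\infty)$. The natural move is to pass to logarithms: $\log g(p)=\tfrac{\log a}{p}+\alpha\log p$, differentiate in $p$, and find the critical point $p_\ast=\tfrac{\log a}{\alpha}$.

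First I would analyze $h(p):=\log g(p)=\tfrac{\log a}{p}+\alpha\log p$. Its derivative is $h'(p)=-\tfrac{\log a}{p^2}+\tfrac{\alpha}{p}=\tfrac{1}{p^2}(\alpha p-\log a)$, which is negative for $p<p_\ast$ and positive for $p>p_\ast$, so $h$ (hence $g$) is strictly decreasing then increasing, with unique global minimum at $p_\ast=\tfrac{\log a}{\alpha}$. Now there are two regimes depending on whether the interior critical point $p_\ast$ lies in the admissible range $[p_0,\infty)$. If $p_\ast\ge p_0$, i.e. $\log a\ge\alpha p_0$, i.e. $a\ge e^{\alpha p_0}$, the infimum is attained at $p_\ast$: substituting, $g(p_\ast)=a^{\alpha/\log a}\,(\log a/\alpha)^\alpha=e^{\alpha}(\log a/\alpha)^\alpha=\bigl(\tfrac{e}{\alpha}\bigr)^\alpha(\log a)^\alpha$, using $a^{1/\log a}=e$. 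This gives the first case. If instead $p_\ast<p_0$, i.e. $1\le a<e^{\alpha p_0}$, then $g$ is increasing on the whole of $[p_0,\infty)$ (since $p_0>p_\ast$), so the infimum is at the left endpoint $p_0$, giving $\Phi(a)=g(p_0)=p_0^\alpha a^{1/p_0}$. That is the second case. (The boundary value $a=e^{\alpha p_0}$ is consistent between the two formulas, as a quick check confirms, so the placement of the endpoint is immaterial.)

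The only mild subtlety — and the step I would be most careful about — is the endpoint analysis: one must verify that when $p_\ast<p_0$ the minimizer really is $p_0$ rather than an interior point, which follows immediately from the monotonicity of $h$ established above, and conversely that when $p_\ast\ge p_0$ the critical point is genuinely admissible and the infimum is attained (not merely approached). Since $g(p)\to\infty$ as $p\to\infty$ (the $p^\alpha$ factor dominates) and $g$ is continuous, the infimum over the closed ray is attained in both regimes, so no limiting argument is needed. Putting the three cases together yields exactly the stated formula for $\Phi(a)$; in particular for large $a$ one reads off the logarithmic growth $\Phi(a)\sim(e/\alpha)^\alpha(\log a)^\alpha$, consistent with the remark that the germ at infinity is independent of the choice of $p_0$.
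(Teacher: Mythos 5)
Your argument is correct and is exactly the ``simple calculation'' the paper alludes to (the paper omits the details entirely): a one-variable minimization of $a^{1/p}p^\alpha$ via the critical point $p_\ast=\log a/\alpha$, with the endpoint case $p_\ast<p_0$ handled by monotonicity. Nothing further is needed; your endpoint and attainment checks, and the consistency check at $a=e^{\alpha p_0}$, are all in order.
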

\begin{proof}
These are simple calculations, and we omit the details.
\end{proof}
\begin{proposition}\label{lem-log}
Let $f\in {\rm L^\alpha bmo}(\RR^d)$ with $\alpha\geq0$ and $B=B_r(x)$ be a ball in $\mathbb{R}^d$ with $r\in]0,\frac12]$.
Then for all $1<\lambda<\frac1r$ we have
\begin{equation*}
\begin{split}
\big|{\rm Avg}_B(f)&-{\rm Avg}_{\lambda B}(f)\big|
\leq2^{d}\left(-\log\lambda r\right)^{-\alpha}\|f\|_{{\rm L^\alpha bmo}(\RR^d)}\\&+C\begin{cases}
\left(\big(-\log r\big)^{1-\alpha}-\big(\log \lambda r\big)^{1-\alpha}\right)\|f\|_{{\rm L^\alpha bmo}(\RR^d)},&\text{for}\,\,\alpha\in[0,1[;\\
\big(\log\big(-\log r\big)-\log\big(-\log\lambda r\big)\big)\|f\|_{{\rm L^\alpha bmo}(\RR^d)},&\text{for}\,\,\alpha=1;\\
\left(\big(-\log\lambda r\big)^{1-\alpha}-\big(-\log r\big)^{1-\alpha}\right)\|f\|_{{\rm L^\alpha bmo}(\RR^d)},&\text{for}\,\,\alpha>1,
\end{cases}
\end{split}
\end{equation*}
where the positive constant $C$ depends on $\alpha$ and is independent of $f$.
\end{proposition}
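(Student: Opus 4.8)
We plan to argue by the classical telescoping through a chain of concentric dilates of $B$, taking care of the logarithmic weight at each step. Put $L:=-\log r>0$ and $L_\lambda:=-\log(\lambda r)>0$, so that $0<L_\lambda<L$, and fix the integer $N\ge0$ with $2^{N}\le\lambda<2^{N+1}$. If $N=0$ (that is $1<\lambda<2$) the claim follows at once from the single inclusion $B\subset\lambda B$ and the one-step estimate below; so assume $N\ge1$ and consider the chain
\[
B=B_r(x)\subset B_{2r}(x)\subset\cdots\subset B_{2^{N}r}(x)\subset B_{\lambda r}(x)=\lambda B .
\]
All radii appearing here are $<1$ (since $2^{N}r\le\lambda r<1$), the ratio of the radii of two consecutive balls equals $2$, and for the final inclusion it lies in $[1,2[$.

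The first ingredient is the one-step estimate: if $B'\subset B''$ are concentric with $m(B'')\le 2^{d}m(B')$ and the radius $\rho$ of $B''$ lies in $]0,1[$, then, writing $\Avg_{B'}(f)-\Avg_{B''}(f)=\Avg_{B'}\big(f-\Avg_{B''}(f)\big)$ and enlarging the domain of integration,
\[
\big|\Avg_{B'}(f)-\Avg_{B''}(f)\big|\le\frac{m(B'')}{m(B')}\,\Avg_{B''}\big(|f-\Avg_{B''}(f)|\big)\le 2^{d}(-\log\rho)^{-\alpha}\|f\|_{{\rm L^\alpha bmo}(\RR^d)},
\]
the last step using $m(B'')\le 2^{d}m(B')$ together with the defining inequality of ${\rm L^\alpha bmo}(\RR^d)$ on $B''$ (with $p=1$). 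Applying this to every inclusion of the chain and summing, I obtain
\[
\big|\Avg_B(f)-\Avg_{\lambda B}(f)\big|\le 2^{d}\|f\|_{{\rm L^\alpha bmo}(\RR^d)}\Big(L_\lambda^{-\alpha}+\sum_{k=1}^{N}\big(-\log(2^{k}r)\big)^{-\alpha}\Big),
\]
where the isolated $L_\lambda^{-\alpha}$ comes from the final, non-dyadic inclusion $B_{2^{N}r}\subset B_{\lambda r}$; the $k=N$ term of the sum also satisfies $(-\log 2^{N}r)^{-\alpha}\le L_\lambda^{-\alpha}$ because $2^{N}r\le\lambda r$, so it merely inflates the leading coefficient by a bounded factor.

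It then remains to estimate $S:=\sum_{k=1}^{N}(L-k\log2)^{-\alpha}$. Since $k\mapsto L-k\log2=-\log(2^{k}r)$ is affine and strictly positive on $\{1,\dots,N\}$, the map $k\mapsto(L-k\log2)^{-\alpha}$ is non-decreasing for $\alpha\ge0$; peeling off the $k=N$ term (already accounted for) and comparing the rest with the corresponding integral gives
\[
S\le L_\lambda^{-\alpha}+\frac{1}{\log2}\int_{L-N\log2}^{\,L-\log2}u^{-\alpha}\intd u .
\]
Evaluating this elementary integral produces the three regimes of the statement: $\frac{1}{(1-\alpha)\log2}\big((L-\log2)^{1-\alpha}-(L-N\log2)^{1-\alpha}\big)$ for $\alpha\in[0,1[$, $\frac{1}{\log2}\big(\log(L-\log2)-\log(L-N\log2)\big)$ for $\alpha=1$, and $\frac{1}{(\alpha-1)\log2}\big((L-N\log2)^{1-\alpha}-(L-\log2)^{1-\alpha}\big)$ for $\alpha>1$. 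Finally I would replace $L-\log2=-\log(2r)$ by $L=-\log r$ and $L-N\log2=-\log(2^{N}r)$ by $L_\lambda=-\log(\lambda r)$: the first replacement is free since $r\le\tfrac12$ and $N\ge1$ force $0<-\log(2r)\le-\log r$, while the second costs only the absolute constant hidden in $-\log(\lambda r)\le-\log(2^{N}r)\le-\log(\lambda r)+\log2$; a short case check, using the monotonicity of $t\mapsto t^{1-\alpha}$ (resp.\ $t\mapsto\log t$) and the sign of $1-\alpha$ in each regime, shows that these substitutions preserve the sign of each bracket and only enlarge the constant $C$. Assembling the pieces gives exactly the asserted inequality.

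The proof has no real obstacle beyond bookkeeping. The one thing to keep an eye on is that every ball of the telescoping chain must have radius strictly less than $1$, so that it is the weighted part of the ${\rm L^\alpha bmo}$-norm that applies; this is why the dyadic chain is stopped at $2^{N}r$ and the gap up to $\lambda r$ is bridged by a single extra, non-dyadic inclusion. All the rest is the routine comparison of $\sum_k(L-k\log2)^{-\alpha}$ with an integral and the three-case evaluation described above.
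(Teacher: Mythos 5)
Your proof follows essentially the same route as the paper's: a telescoping chain of dyadic dilates of $B$, the doubling-plus-${\rm L^\alpha bmo}$ one-step estimate on each inclusion, and a comparison of the resulting sum with an integral evaluated in the three regimes $\alpha\in[0,1[$, $\alpha=1$, $\alpha>1$. The only deviation is cosmetic: peeling off the $k=N$ term inflates the coefficient of the leading $\left(-\log\lambda r\right)^{-\alpha}$ contribution from $2^{d}$ to a bounded multiple, which is immaterial for how the proposition is used (and your bookkeeping at that point is, if anything, more careful than the paper's own sum estimate).
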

\begin{proof}
Since $\lambda>1$,  there exists a nonnegative integer $k_0$ such
that $2^{k_0}\leq\lambda<2^{k_0+1}$. By the triangle inequality, we
have
\begin{align}\label{eq-avg-1}
\big|{\rm Avg}_B(f)-{\rm Avg}_{\lambda B}(f)\big|\leq&\sum_{k=0}^{k_0-1}\big|{\rm Avg}_{2^{k}B}(f)-{\rm Avg}_{2^{k+1}B}(f)\big|\\
&+\big|{\rm Avg}_{2^{k_0}B}(f)-{\rm Avg}_{\lambda B}(f)\big|.\nonumber
\end{align}
Using the doubling property of the Euclidean measure, one concludes that for $k\in[0,k_0-1]$
\begin{align}\label{eq-avg-2}
\big|{\rm Avg}_{2^{k}B}(f)-{\rm Avg}_{2^{k+1}B}(f)\big|\leq&{\rm Avg}_{2^{k}B}\big|f-{\rm Avg}_{2^{k+1}B}(f)\big|\nonumber\\
\leq&\frac{|2^{k+1}B|}{|2^kB|}\cdot\frac{1}{|2^{k+1}B|}\int_{2^{k+1}B}\big|f(y)-{\rm Avg}_{2^{k+1}B}(f)\big|\intd y\nonumber\\
\leq&2^{d}\left(-\log2^{k+1}r\right)^{-\alpha}\|f\|_{\rm L^\alpha bmo}\nonumber\\
=&2^{d}\big(-(k+1)-\log r\big)^{-\alpha}\|f\|_{\rm L^\alpha bmo}.
\end{align}
Similarly, we obtain
\begin{align}\label{eq-avg-3}
\big|{\rm Avg}_{2^{k_0}B}(f)-{\rm Avg}_{\lambda B}(f)\big|\leq &{\rm Avg}_{2^{k_0}B}\big|f-{\rm Avg}_{\lambda B}(f)\big|\nonumber\\
\leq&\frac{|\lambda B|}{|2^{k_0}B|}\cdot\frac{1}{|\lambda B|}\int_{\lambda B}\big|f(y)-{\rm Avg}_{\lambda B}(f)\big|\intd y\nonumber\\
\leq&2^{d}\left(-\log\lambda r\right)^{-\alpha}\|f\|_{\rm L^\alpha bmo}.
\end{align}
Inserting \eqref{eq-avg-2} and \eqref{eq-avg-3} into \eqref{eq-avg-1}, we eventually get
\begin{align}\label{eq-avg-4}
\big|{\rm Avg}_B(f)-{\rm Avg}_{\lambda B}(f)\big|\leq&\sum_{k=0}^{k_0-1}2^{d}\big(-(k+1)-\log r\big)^{-\alpha}\|f\|_{\rm L^\alpha bmo}\\
&+2^{d}\left(-\log\lambda r\right)^{-\alpha}\|f\|_{\rm L^\alpha bmo}.\nonumber
\end{align}
We observe that:  for $\alpha\in[0,1[,$ we have
\begin{align*}
&\sum_{k=0}^{k_0-1}2^{d}\big(-(k+1)-\log r\big)^{-\alpha}\|f\|_{\rm L^\alpha bmo}\\
=&\sum_{k=1}^{k_0}2^{d}\big(-k-\log r\big)^{-\alpha}\|f\|_{\rm L^\alpha bmo}\\
\leq&\frac{2^{d}}{1-\alpha}\left(\big(-1-\log r\big)^{1-\alpha}-\big(-k_0-\log r\big)^{1-\alpha}\right)\|f\|_{\rm L^\alpha bmo}\\
\leq&C\left(\big(-\log r\big)^{1-\alpha}-\big(-\log \lambda r\big)^{1-\alpha}\right)\|f\|_{\rm L^\alpha bmo}\\
\leq&C(\log\lambda)^{1-\alpha}\|f\|_{\rm L^\alpha bmo},
\end{align*}
for $\alpha=1,$ we obtain
\begin{align*}
&\sum_{k=1}^{k_0}2^{d}\big(-k-\log r\big)^{-1}\|f\|_{\rm L^\alpha bmo}\\
=&2^{d}\big(\log\big(-1-\log r\big)-\log\big(-k_0-\log r\big)\big)\|f\|_{\rm L^\alpha bmo}\\
\leq&C\big(\log\big(-\log r\big)-\log\big(-\log\lambda r\big)\big)\|f\|_{\rm L^\alpha bmo}\\
\leq&C\log\Big(1+\frac{\log\lambda}{-\log\lambda r}\Big)\|f\|_{\rm L^\alpha bmo},
\end{align*}
and for $\alpha>1,$ we have
\begin{align*}
&\sum_{k=1}^{k_0}2^{d}\big(-k-\log r\big)^{-\alpha}\|f\|_{\rm L^\alpha bmo}\\
\leq&\frac{2^{d}}{\alpha-1}\left(\big(-k_0-\log r\big)^{1-\alpha}-\big(-1-\log r\big)^{1-\alpha}\right)\|f\|_{\rm L^\alpha bmo}\\
\leq&C\left(\big(-\log\lambda r\big)^{1-\alpha}-\big(-\log r\big)^{1-\alpha}\right)\|f\|_{\rm L^\alpha bmo}\\
\leq&C(\log\lambda)^{\alpha-1}\big(-\log\lambda r\big)^{1-\alpha}\big(-\log r\big)^{1-\alpha}\|f\|_{\rm L^\alpha bmo}\\
\leq&C(\log\lambda)^{\alpha-1}\big(-\log\lambda r\big)^{1-\alpha}\big(-\log r\big)^{1-\alpha}\|f\|_{\rm L^\alpha bmo}.
\end{align*}
Plugging these estimates into \eqref{eq-avg-4}, we obtain the required result.
\end{proof}
\begin{remark}\label{rem-log}
From the above proof, we obviously see that
\begin{equation*}
\begin{split}
\big|{\rm Avg}_B(f)&-{\rm Avg}_{\lambda B}(f)\big|
\leq2^{d}\left(-\log\lambda r\right)^{-\alpha}\|f\|_{{\rm L^\alpha bmo}(\RR^d)}\\
&+C\|f\|_{{\rm L^\alpha bmo}(\RR^d)}\begin{cases}
(\log\lambda)^{1-\alpha},&\text{for}\,\,\alpha\in[0,1[;\\
\log\Big(1+\frac{\log\lambda}{-\log\lambda r}\Big),&\text{for}\,\,\alpha=1;\\
(\log\lambda)^{\alpha-1}\big(\log\lambda r\cdot\log r\big)^{1-\alpha},&\text{for}\,\,\alpha>1.
\end{cases}
\end{split}
\end{equation*}
\end{remark}
\begin{lemma}\label{lemma-p-alpha}
Let $f\in {\rm L^\alpha bmo}(\RR^d)$. There exists a positive constant $C$ independent of $f$ such that
\begin{equation}\label{eq.-p-alpha-1}
\sup_{1\leq p<\infty}\frac{\|f\|_{p,\,1}}{p^{1-\alpha}}\leq C\|f\|_{{\rm L^\alpha bmo}(\RR^d)},\quad \text{for}\quad \alpha\in[0,1[,
\end{equation}
and
\begin{equation}\label{eq.-p-alpha-2}
\sup_{1\leq p<\infty}\frac{\|f\|_{p,\,1}}{\log(1+p)}\leq C\|f\|_{{\rm Lbmo}(\RR^d)}.
\end{equation}
\end{lemma}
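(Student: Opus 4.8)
The plan is to estimate the local $L^p$ norm $\|f\|_{p,1}$ of an $\mathrm{L}^\alpha\mathrm{bmo}$ function by dyadically decomposing a unit ball into annuli determined by the scales $r=2^{-k}$, $k=0,1,\dots$, using the bound on averages proved in Proposition \ref{lem-log} to control the drift of the mean $\mathrm{Avg}_{B_{2^{-k}}(x)}(f)$ away from $\mathrm{Avg}_{B_1(x)}(f)$, and summing the resulting oscillation contributions. Concretely, fix $x\in\RR^d$ and write, for each dyadic scale, $B_k:=B_{2^{-k}}(x)$. The $\mathrm{L}^\alpha\mathrm{bmo}$ bound gives $\big(\mathrm{Avg}_{B_k}|f-\mathrm{Avg}_{B_k}(f)|^p\big)^{1/p}\lesssim (k\log 2)^{-\alpha}\|f\|_{\mathrm{L}^\alpha\mathrm{bmo}_p}$ for $k\ge1$, and since $\mathrm{L}^\alpha\mathrm{bmo}(\RR^d)$ is equivalent to $\mathrm{L}^\alpha\mathrm{bmo}_p(\RR^d)$ for $p>1$ with a constant that — and this is the crucial point — grows at most linearly in $p$ (this is exactly Corollary \ref{Coro.JN}, the generalized John--Nirenberg inequality, invoked with the sharp linear dependence as in the remark following Theorem \ref{prop.reg-pser}), we may replace $\|f\|_{\mathrm{L}^\alpha\mathrm{bmo}_p}$ by $Cp\,\|f\|_{\mathrm{L}^\alpha\mathrm{bmo}}$.

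The main computation is then the following. Decompose $B_1(x)=\bigcup_{k\ge0}\big(B_k\setminus B_{k+1}\big)$ (up to a null set) and estimate
\begin{align*}
\|f\|^p_{L^p(B_1(x))}&\leq C\sum_{k\geq0} m(B_k)\,\mathrm{Avg}_{B_k}|f|^p\\
&\leq C\sum_{k\geq0} m(B_k)\Big(\mathrm{Avg}_{B_k}|f-\mathrm{Avg}_{B_k}(f)|^p+|\mathrm{Avg}_{B_k}(f)|^p\Big).
\end{align*}
For the oscillation part, $m(B_k)\sim 2^{-kd}$ and $\mathrm{Avg}_{B_k}|f-\mathrm{Avg}_{B_k}(f)|^p\leq (Cp)^p(k+1)^{-\alpha p}\|f\|^p_{\mathrm{L}^\alpha\mathrm{bmo}}$, so after summing the geometric factor $2^{-kd}$ one gets a contribution $\lesssim (Cp)^p\|f\|^p_{\mathrm{L}^\alpha\mathrm{bmo}}$, i.e. $(Cp)\|f\|_{\mathrm{L}^\alpha\mathrm{bmo}}$ after taking $p$-th roots. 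For the mean part, $|\mathrm{Avg}_{B_k}(f)|\leq |\mathrm{Avg}_{B_1(x)}(f)|+|\mathrm{Avg}_{B_k}(f)-\mathrm{Avg}_{B_1(x)}(f)|$; the first term is bounded by $\int_{B_1(x)}|f|\leq\|f\|_{\mathrm{L}^\alpha\mathrm{bmo}}$, and the second, by Proposition \ref{lem-log} applied with $r=2^{-k}$ and $\lambda=2^{k}$, is bounded by $C(\log 2^k)^{1-\alpha}\|f\|_{\mathrm{L}^\alpha\mathrm{bmo}}\sim Ck^{1-\alpha}\|f\|_{\mathrm{L}^\alpha\mathrm{bmo}}$ when $\alpha\in[0,1[$, and by $C\log(1+k)\|f\|_{\mathrm{Lbmo}}$ when $\alpha=1$. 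Thus $|\mathrm{Avg}_{B_k}(f)|^p\leq (C(1+k))^{(1-\alpha)p}\|f\|^p_{\mathrm{L}^\alpha\mathrm{bmo}}$ (resp. $(C\log(1+k))^p\|f\|^p_{\mathrm{Lbmo}}$), and summing against $2^{-kd}$ again converges, the polynomial-in-$k$ weight being harmless against the geometric decay; one checks $\big(\sum_k 2^{-kd}(1+k)^{(1-\alpha)p}\big)^{1/p}\leq C p^{1-\alpha}$ by comparing with a Gamma-type integral (for $\alpha=1$ the same comparison yields $\leq C\log(1+p)$). Combining the two parts gives $\|f\|_{p,1}\leq C\big(p+p^{1-\alpha}\big)\|f\|_{\mathrm{L}^\alpha\mathrm{bmo}}$ — but here I must be more careful, since the $Cp$ from the oscillation term is worse than the claimed $p^{1-\alpha}$.

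The resolution of this apparent loss — and the step I expect to be the genuine obstacle — is to avoid invoking John--Nirenberg at a $p$-independent exponent and instead interpolate: bound $\|f\|_{p,1}$ by first using $\mathrm{L}^\alpha\mathrm{bmo}=\mathrm{L}^\alpha\mathrm{bmo}_q$ at a \emph{fixed} $q>1$ (say $q=2$) with an absolute constant, so the oscillation sums carry no growing constant, and only the mean part $|\mathrm{Avg}_{B_k}(f)-\mathrm{Avg}_{B_1(x)}(f)|$ — which is governed directly by Proposition \ref{lem-log} and grows like $k^{1-\alpha}$ (resp. $\log(1+k)$) with \emph{no} factor of $p$ — feeds the $p$-dependence. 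In other words, for $p>2$ one writes $\|f\|^p_{L^p(B_k)}\leq \|f-\mathrm{Avg}_{B_k}(f)\|^p_{L^p(B_k)}+m(B_k)|\mathrm{Avg}_{B_k}(f)|^p$ and controls the first summand by Hölder against the $L^q$ oscillation only after extracting the worst scale, so that the effective exponent entering the series is $p$ only through the $\ell^p$-type sum $\big(\sum_{k\geq0}2^{-kd}(1+k)^{(1-\alpha)p}\big)^{1/p}$, whose value is $\asymp p^{1-\alpha}$ for $\alpha\in[0,1[$ and $\asymp\log(1+p)$ for $\alpha=1$ by the standard estimate $\sum_{k\ge0}\rho^k k^{s}\asymp \Gamma(s+1)(1-\rho)^{-s-1}$ evaluated at $\rho=2^{-d}$, $s=(1-\alpha)p$, followed by Stirling. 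Taking the $p$-th root of the whole sum then produces precisely the bounds $\|f\|_{p,1}\leq Cp^{1-\alpha}\|f\|_{\mathrm{L}^\alpha\mathrm{bmo}}$ and $\|f\|_{p,1}\leq C\log(1+p)\|f\|_{\mathrm{Lbmo}}$, and taking the supremum over $x$ and then over $p$ completes the proof.
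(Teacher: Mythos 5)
There is a genuine gap, and it sits exactly where you flagged it. Your first computation is fine but, as you admit, only yields $\|f\|_{p,1}\leq Cp\,\|f\|_{\rm L^\alpha bmo}$: at the coarse scales $k=O(1)$ the weight $(-\log 2^{-k})^{\alpha}\sim k^{\alpha}$ is of order one, so the factor $Cp$ from the generalized John--Nirenberg equivalence $\|f\|_{{\rm L^\alpha bmo}_p}\leq Cp\|f\|_{\rm L^\alpha bmo}$ is not compensated there, and the $k=1$ term alone already forces the loss. Your proposed repair does not close this: fixing $q=2$ and invoking ${\rm L^\alpha bmo}={\rm L^\alpha bmo}_2$ with an absolute constant gives control of the $L^2$-oscillation only, and H\"older runs in the wrong direction --- on a normalized ball it gives $\big(\Avg_{B}|f-\Avg_B(f)|^2\big)^{1/2}\leq\big(\Avg_{B}|f-\Avg_B(f)|^p\big)^{1/p}$, not the reverse --- so the quantity $\|f-\Avg_{B_k}(f)\|_{L^p(B_k)}$ that actually enters your sum cannot be bounded by the $L^2$ oscillation, and the phrase ``after extracting the worst scale'' is not an inequality. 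Without John--Nirenberg at exponent $p$ you cannot estimate the $L^p$ oscillation at all; with it you pay the factor $p$ at precisely the scales where the logarithmic weight gives nothing back.

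The missing idea, which is the paper's actual device, is to tie the spatial scale to the integrability exponent: cover the unit ball $B_1(x)$ by a Vitali family of balls of radius $2^{-p}$ (with the half-radius balls disjoint and contained in $B_1(x)$), so that the covering cost is only the harmless factor $2^{d/p}$ and the \emph{only} scale at which an $L^p$ oscillation is ever used is $r=2^{-p}$. There $(-\log r)^{\alpha}=(p\log 2)^{\alpha}$ exactly cancels the John--Nirenberg factor $p$ from Corollary \ref{Coro.JN}, producing $Cp^{1-\alpha}$, and the remaining price is the drift of averages $\big|\Avg_{B_{2^{-1}}(x)}(f)-\Avg_{B_{2^{-p}}(x)}(f)\big|$ across the $\sim p$ intermediate dyadic scales, which Remark \ref{rem-log} (i.e.\ Proposition \ref{lem-log} with $\lambda=2^{p-1}$, $r=2^{-p}$) bounds by $Cp^{1-\alpha}\|f\|_{\rm L^\alpha bmo}$ for $\alpha\in[0,1[$ and by $C\log(1+p)\|f\|_{\rm Lbmo}$ for $\alpha=1$; this is morally your $\sum_{j\leq p}j^{-\alpha}$ computation, which is the part of your proposal that survives. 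Your annulus decomposition could only be salvaged by re-covering the coarse annuli with radius-$2^{-p}$ balls and estimating the average drift as above, which is essentially the paper's proof; as written, the proposal does not establish \eqref{eq.-p-alpha-1}--\eqref{eq.-p-alpha-2}.
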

\begin{proof}
By the John-Nirenberg inequality, it follows from the result in
\cite{Tan04} that
\begin{equation*}
\sup_{1\leq p<\infty}\frac{\|f\|_{p,\,1}}{p}\leq C\|f\|_{{\rm bmo}(\RR^d)}.
 \end{equation*}
 So we just need to show Lemma \ref{lemma-p-alpha} in the case $\alpha\in]0,1]$.

For a fixed unit ball $B_1(x)\subset\RR^d$, performing the Vitali covering theorem, we conclude that there exists a collection $\{B_{2^{-p}}(x_k)\}_k$ such that
\begin{enumerate}
  \item[1.] $B_1(x)\subset\bigcup_{k}B_{2^{-p}}(x_k)$;
  \item[2.]The balls $\{B_{2^{-(p+1)}}(x_k)\}_k$ are mutually disjoint;
  \item[3.] For each $k$,  $B_{2^{-(p+1)}}(x_k)\subset B_1(x)$.
\end{enumerate}
Whence, we have
\begin{align*}
&\Big(\frac{1}{m\left(B_1(x)\right)}\int_{B_1(x)}\big|f(y)\big|^{p}\intd y\Big)^{\frac{1}{p}}\\
\leq &\Big(\frac{1}{m\left(B_1(x)\right)}\sum_{k}\int_{B_{2^{-p}}(x_k)}\big|f(y)\big|^{p}\intd y\Big)^{\frac{1}{p}}\nonumber\\
=&\Big(\sum_{k}\frac{m(B_{2^{-p}}(x_k))}{m\left(B_1(x)\right)}\cdot\frac{1}{m(B_{2^{-p}}(x_k))}\int_{B_{2^{-p}}(x_k)}\big|f(y)\big|^{p}\intd y\Big)^{\frac{1}{p}}\nonumber\\
\leq& 2^{\frac{d}{p}}\sup_{x\in\RR^d}\Big(\frac{1}{m\left(B_{2^{-p}}(x)\right)}\int_{B_{2^{-p}}(x)}\big|f(y)\big|^{p}\intd y\Big)^{\frac{1}{p}}.
\end{align*}
We observe that
\begin{align*}
&\Big(\frac{1}{m\left(B_{2^{-p}}(x)\right)}\int_{B_{2^{-p}}(x)}\big|f(y)\big|^{p}\intd y\Big)^{\frac{1}{p}}\\
\leq&\Big(\frac{1}{m\left(B_{2^{-p}}(x)\right)}\int_{B_{2^{-p}}(x)}\big|f(y)-\Avg_{B_{2^{-1}}(x)}(f)\big|^{p}\intd y\Big)^{\frac{1}{p}}\\
&+\frac{1}{m\left(B_{2^{-1}}(x)\right)}\int_{B_{2^{-1}}(x)}\big|f(y)\big|\intd y\\
:=&I_1+I_2.
\end{align*}
On one hand, it is clear that
\begin{equation*}\label{eq.-p-1}
I_2\leq 2^{d}\int_{B_{1}(x)}\big|f(y)\big|\intd y\leq 2^{d}\|f\|_{\rm bmo}.
\end{equation*}
Next, by the triangle inequality, we can conclude that
\begin{align*}
I_1\leq &\Big(\frac{1}{m\left(B_{2^{-p}}(x)\right)}\int_{B_{2^{-p}}(x)}\big|f(y)-\Avg_{B_{2^{-p}}(x)}(f)\big|^{p}\intd y\Big)^{\frac{1}{p}}\\
&+\frac{1}{m\left(B_{2^{-1}}(x)\right)}\int_{B_{2^{-1}}(x)}\big|f(y)-\Avg_{B_{2^{-p}}(x)}(f)\big|\intd y\\
:=&I_{11}+I_{12}.
\end{align*}
According to the definition of  $\rm L^\alpha bmo$ and Corollary
\ref{Coro.JN}, the term $I_{11}$ can be controlled by
\begin{equation*}
p^{-\alpha}\|f\|_{{\rm L^\alpha bmo}_p}\leq Cp^{1-\alpha}\|f\|_{\rm L^\alpha bmo}.
\end{equation*}
Next, performing Remark \ref{rem-log} with $\lambda=2^{p-1}$ and $r=2^{-p}$, we obtain
\begin{align*}
I_{12}\leq& \frac{1}{m\left(B_{2^{-1}}(x)\right)}\int_{B_{2^{-1}}(x)}\big|f(y)-\Avg_{B_{2^{-1}}(x)}(f)\big|\intd y\\
&+\left|\Avg_{B_{2^{-1}}(x)}(f)-\Avg_{B_{2^{-p}}(x)}(f)\right|\\
\leq&C\begin{cases}
p^{1-\alpha}\|f\|_{\rm L^\alpha bmo},&\alpha\in[0,1[;\\
\log(1+p)\|f\|_{\rm Lbmo},&\alpha=1.
\end{cases}
\end{align*}
Collecting all these estimates, we  obtain
\begin{align*}
\Big(\int_{B_1(x)}\big|f(y)\big|^{p}\intd y\Big)^{\frac{1}{p}}\leq&\big(m\left(B_1(x)\right)\big)^{\frac{1}{p}}\Big(\frac{1}{m\left(B_1(x)\right)}\int_{B_1(x)}\big|f(y)\big|^{p}\intd y\Big)^{\frac{1}{p}}\\
\leq&C\begin{cases}
p^{1-\alpha}\|f\|_{\rm L^\alpha bmo},&\alpha\in[0,1[;\\
\log(1+p)\|f\|_{\rm Lbmo},&\alpha=1.
\end{cases}
\end{align*}
This completes the proof of Lemma \ref{lemma-p-alpha}.
\end{proof}
\begin{proposition}\label{Prop-flowmap-log}
Let $u$ be a time-dependent vector field in $L^1_{\rm loc}(\RR^+; {\rm LogLog}^\alpha\cap L^\infty)$ with $\alpha\in[0,1]$. There exists a unique continuous map $\psi$ from
$\RR^+\times\RR^d$ to $\RR^d$ such that
\begin{equation}\label{eq.flowfun-2}
\psi(t,x)=x+\int_0^tu\big(\tau,\psi(\tau,x)\big)\intd\tau.
\end{equation}
Moreover, if $|x-y|<\min\{1,\,L_\alpha\}$, where
\begin{equation}\label{eq.flow-small-2}
 L_\alpha:=\begin{cases}e^{e-\exp{2^{\frac{\alpha}{1-\alpha}}}\big(1+\big((1-\alpha)V_{{\rm LogLog}^\alpha}(t)\big)^{\frac{1}{1-\alpha}}\big)},&\alpha\in[0,1[,\\
e^{\big(e-\exp(e^{V_{{\rm LogLog}}(t)})\big)},&\alpha=1,\end{cases}
\end{equation}
and $V_{{\rm LogLog}^\alpha}(t):=\int_0^t\|u(\tau)\|_{{\rm LogLog}^\alpha}\intd\tau$, then we have
\begin{align}\label{eq.flow-evolution-2}
&\big|\psi(t,x)-\psi(t,y)\big|\\
\leq&\left\{\begin{aligned}& e^{e-(e-\log \left(|x-y|\right))^{2^{-\frac{\alpha}{1-\alpha}}}\exp\big(-(1-\alpha)^{\frac{1}{1-\alpha}}\big(V_{{\rm LogLog}^\alpha}(t)\big)^{\frac{1}{1-\alpha}}\big)},\quad \alpha\in[0,1[,\\
&e^{e-\left(e-\log \left(|x-y|\right)\right)^{\exp{\big(-V_{{\rm LogLog}}}(t)\big)}},\qquad\alpha=1.
\end{aligned}\right.\nonumber
\end{align}
\end{proposition}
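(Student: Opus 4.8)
The statement is a quantitative ODE/flow estimate, so the natural route is the classical Osgood argument adapted to the modulus of continuity $\mu_\alpha(r)=r(e-\log r)(\log(e-\log r))^\alpha$ that defines ${\rm LogLog}^\alpha$. First I would establish existence and uniqueness of the flow $\psi$: since $u(\tau,\cdot)\in{\rm LogLog}^\alpha\subset{\rm LogLog}\subset{\rm LL}$ for $\alpha\in[0,1]$, the modulus $\mu_\alpha$ is an Osgood modulus ($\int_0^1 \mathrm{d}r/\mu_\alpha(r)=\infty$), so the Osgood uniqueness theorem (in the form used in \cite{BCD11}) applies directly and gives the unique continuous solution of \eqref{eq.flowfun-2}. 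Continuity in $(t,x)$ follows from the integral formulation together with local boundedness of $u$ in time.

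For the quantitative bound \eqref{eq.flow-evolution-2}, the plan is to fix $x,y$, set $\rho(t):=|\psi(t,x)-\psi(t,y)|$, and derive from \eqref{eq.flowfun-2} the differential inequality
\begin{equation*}
\rho(t)\leq |x-y|+\int_0^t\|u(\tau)\|_{{\rm LogLog}^\alpha}\,\mu_\alpha\big(\rho(\tau)\big)\intd\tau,
\end{equation*}
valid as long as $\rho(\tau)<1$ so that $\mu_\alpha$ is the relevant (concave, increasing) modulus. By the Osgood lemma, this yields $-\mathcal{M}(\rho(t))+\mathcal{M}(|x-y|)\leq V_{{\rm LogLog}^\alpha}(t)$, where $\mathcal{M}(r):=\int_r^{1}\frac{\intd s}{\mu_\alpha(s)}$. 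The whole computation then reduces to evaluating $\mathcal{M}$ explicitly. For $\alpha\in[0,1[$, substituting $s=e^{e-w}$ (so $e-\log s=w$, $\intd s/s=-\intd w$) turns $\int \frac{\intd s}{s(e-\log s)(\log(e-\log s))^\alpha}$ into $\int \frac{\intd w}{w(\log w)^\alpha}$, then $v=\log w$ gives $\int v^{-\alpha}\intd v=\frac{v^{1-\alpha}}{1-\alpha}$; unwinding, $\mathcal{M}(r)\sim \frac{1}{1-\alpha}\big(\log(e-\log r)\big)^{1-\alpha}$ up to the additive constant fixed by the basepoint, which after exponentiating twice produces exactly the double exponential with exponent $2^{-\alpha/(1-\alpha)}\exp(-(1-\alpha)^{1/(1-\alpha)}(V_{{\rm LogLog}^\alpha}(t))^{1/(1-\alpha)})$ in \eqref{eq.flow-evolution-2}. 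For $\alpha=1$ the inner integral is $\int \frac{\intd w}{w\log w}=\log\log w$, and one more exponentiation accounts for the extra $\exp$ in the $\alpha=1$ line. The smallness condition \eqref{eq.flow-small-2} is precisely what guarantees that $\mathcal{M}(|x-y|)-V_{{\rm LogLog}^\alpha}(t)$ stays in the range where the inverse of $\mathcal{M}$ is given by these closed forms (equivalently, that $\rho(t)$ does not leave the region $\rho<1$ where the estimate on $\mu_\alpha$ was used); I would obtain $L_\alpha$ by solving $\mathcal{M}(L_\alpha)=V_{{\rm LogLog}^\alpha}(t)$.

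The main obstacle is bookkeeping rather than conceptual: one must be careful that the constants generated by the two successive changes of variables and the two exponentiations match the stated closed forms exactly, and that the basepoint of $\mathcal{M}$ (taken at $r=1$, where $e-\log r=e$, $\log(e-\log r)=1$) produces the right offset so that the leading double-exponential structure, not merely its order, comes out as written. A secondary point of care is the $\Delta_2$-type control: since $\mu_\alpha$ is increasing and concave near $0$ one may freely use $|u(\tau,\psi(\tau,x))-u(\tau,\psi(\tau,y))|\le \|u(\tau)\|_{{\rm LogLog}^\alpha}\mu_\alpha(\rho(\tau))$ only while $\rho(\tau)<1$, so a continuity/bootstrap argument is needed to confirm that under \eqref{eq.flow-small-2} the trajectory stays in that regime on $[0,t]$; this is routine once $L_\alpha$ is defined as above. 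Finally, uniqueness of $\psi$ already followed from Osgood, so no separate argument is required there.
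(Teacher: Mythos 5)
Your proposal is correct and follows essentially the same route as the paper: an Osgood-type argument for the quantity $\delta(t)=|\psi(t,x)-\psi(t,y)|$ with the explicit antiderivative of $1/\mu_\alpha$ (the paper's $H_\alpha(\sigma)=\frac{1}{1-\alpha}\big(\log(e-\log\sigma)\big)^{1-\alpha}$, resp.\ $\log\log(e-\log\sigma)$), inversion of this primitive, and a continuity/bootstrap argument guaranteed by the smallness condition \eqref{eq.flow-small-2} to keep $\delta<1$. The only bookkeeping point you gloss over is that passing from $H_\alpha^{-1}\big(H_\alpha(|x-y|)-V(t)\big)$ to the exact stated closed form uses the elementary inequality $(b-c)^{\frac{1}{1-\alpha}}\geq 2^{-\frac{\alpha}{1-\alpha}}b^{\frac{1}{1-\alpha}}-c^{\frac{1}{1-\alpha}}$, which is precisely how the paper produces the exponent $2^{-\frac{\alpha}{1-\alpha}}$ and the factor $\exp\big(-(1-\alpha)^{\frac{1}{1-\alpha}}\big(V_{{\rm LogLog}^\alpha}(t)\big)^{\frac{1}{1-\alpha}}\big)$.
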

\begin{proof}
Since $u\in L^1_{\rm loc}(\RR^+; {\rm LogLog}^\alpha\cap L^\infty)$ with $\alpha\in[0,1]$ and
 $$\int_0^1\frac{1}{r(e-\log r)\log^\alpha(e-\log r)}\,{\rm d}r=\infty,$$
 we know that  $u$ is an Osgood modulus of continuity. By \cite[ Theorem 3.2]{BCD11},  we
 get existence and uniqueness of solution to equation \eqref{eq.flowfun-2}. Now, let $\delta(t):=\big|\psi(t,x)-\psi(t,y)\big|$. According to equations
\eqref{eq.flowfun-2} and the vector field $u\in L^1_{\rm loc}(\RR^+; {\rm LogLog}^\alpha\cap L^\infty)$, one concludes that
\begin{equation}\label{eq.flow-1-2}
\delta(t)\leq\delta(0)+\int_0^t\|u(\tau)\|_{{\rm LogLog}^\alpha}\delta(\tau)\big(e-\log\delta(\tau)\big)\big(\log(e-\log(\delta(\tau))\big)^\alpha\intd\tau
\end{equation}
as long as $\delta(\tau)<1$, for all $\tau\in[0,t[$.

Let $I(t):=\{\tau\in[0,t]\,|\, F(\tau)\leq1\}$ with
\[F(s):=\delta(0)+\int_0^s\|u(\tau)\|_{{\rm
LogLog}^\alpha}\delta(\tau)\big(e-\log\delta(\tau)\big)\big(\log(e-\log(\delta(\tau))\big)^\alpha\intd\tau.\]
Our
target is now to prove that $I(t)=[0,t]$ when $\delta(0)<
\min\{1,L_\alpha\}$. Thanks to the continuity in time of the flow
and the fact that $F(0)=\delta(0)<1$, we know that $I(t)$ is a
non-empty closed set. Thus, it remains for us to show that $t_*=t$,
where
\begin{equation*}
t_{*}=\max\big\{\tau\in[0,t],\,[0,\tau]\subset I(t)\big\}.
\end{equation*}
In a similar fashion as \eqref{eq.flow-1-2}, we infer that
\begin{equation}\label{eq.flow-2-2}
\delta(s)\leq\delta(0)+\int_0^s\|u(\tau)\|_{{\rm LogLog}^\alpha}\delta(\tau)\big(e-\log\delta(\tau)\big)\big(\log(e-\log(\delta(\tau))\big)^\alpha\intd\tau
\end{equation}
for each $s\in I_*(t):=[0,t_*].$

From definition of $F(s)$, a simple calculation yields
\begin{equation*}
\begin{split}
F'(s)=&\|u(s)\|_{{\rm LogLog}^\alpha}\delta(s)\big(e-\log\delta(s)\big)\big(\log(e-\log(\delta(s))\big)^\alpha\\
\leq &\|u(s)\|_{{\rm LogLog}^\alpha} F(s)\big(e-\log F(s)\big)\big(\log(e-\log(F(s))\big)^\alpha,
\end{split}
\end{equation*}
where we have used the facts that  $s(e-\log s)\big(\log(e-\log s)\big)^\alpha $ is a positive
increasing function on $]0,1]$ and $\delta(s)\leq F(s)$. This
implies
\begin{equation*}
-H'_{\alpha}\big(F(s)\big)\leq \|u(s)\|_{{\rm LogLog}^\alpha},
\end{equation*}
where
\begin{equation*}
\quad H_\alpha(\sigma)=\begin{cases}\frac{1}{1-\alpha}\big(\log(e-\log \sigma)\big)^{1-\alpha},\quad&\text{if}\quad\alpha\in[0,1[,\\
\log\big(\log(e-\log \sigma)\big),\quad&\text{if}\quad\alpha=1.
\end{cases}
\end{equation*}
Accordingly, we have
\begin{equation}\label{eq.flow-f-2}
H_{\alpha}\big(F(s)\big)\geq H_{\alpha}\big(F(0)\big)-\int_0^s\|u(\tau)\|_{{\rm LogLog}^\alpha}\intd\tau=H_{\alpha}\big(\delta(0)\big)-\int_0^s\|u(\tau)\|_{{\rm LogLog}^\alpha}\intd\tau.
\end{equation}
Thanks to the representation formula of $H_\alpha$ with $\alpha\in[0,1]$, we find that
$H_1$ is bijective from $]0,1]$ into $[0,\infty[$, and $H_\alpha$ also
is bijective from $]0,1]$ into $[\frac{1}{1-\alpha},\infty[$ for
$\alpha\in[0,1[$. Thus, there exists a unique inverse function
$H_\alpha^{-1}$ of $H_\alpha$ given by the following formula:
\begin{equation*}
\quad H_\alpha^{-1}(\sigma)=\begin{cases}e^{e-\exp{\left((1-\alpha)\sigma\right)^{\frac{1}{1-\alpha}}}},\quad&\text{if}\quad\alpha\in[0,1[,\\
e^{\left(e-\exp e^{\sigma}\right)},\quad&\text{if}\quad\alpha=1.
\end{cases}
\end{equation*}
Next, we see that \eqref{eq.flow-f-2} means that for all $t\in[0,t_*]$,
\begin{equation*}
\int_0^t\|u(\tau)\|_{{\rm LogLog}^\alpha}\intd\tau\leq H_{\alpha}\big(\delta(0)\big),\quad \text{for all}\quad\alpha\in[0,1].
\end{equation*}
For $\alpha\in[0,1[$ and $0<c<b<\infty$, we have
\begin{equation}\label{eq.flow-3-2}
\begin{split}
H_{\alpha}^{-1}(b-c)=&e^{e-\exp{\big((1-\alpha)^{\frac{1}{1-\alpha}}(b-c)^{\frac{1}{1-\alpha}}\big)}}\\
\leq&e^{e-\exp{(1-\alpha)^{\frac{1}{1-\alpha}}\big(2^{-\frac{\alpha}{1-\alpha}}b^{\frac{1}{1-\alpha}}-c^{\frac{1}{1-\alpha}}\big)}}\\
=&e^{e}\cdot e^{-\big(\exp{(1-\alpha)^{\frac{1}{1-\alpha}}b^{\frac{1}{1-\alpha}}}\big)^{2^{-\frac{\alpha}{1-\alpha}}}\exp\big(-(1-\alpha)^{\frac{1}{1-\alpha}}c^{\frac{1}{1-\alpha}}\big)}\\
=&e^{e-\big(e-\log H^{-1}_\alpha(b)\big)^{2^{-\frac{\alpha}{1-\alpha}}}\exp\big(-(1-\alpha)^{\frac{1}{1-\alpha}}c^{\frac{1}{1-\alpha}}\big)},
\end{split}
\end{equation}
where, in the second line of \eqref{eq.flow-3-2}, we have used the following inequality
\begin{equation*}
(b-c)^{\frac{1}{1-\alpha}}\geq2^{-\frac{\alpha}{1-\alpha}}b^{\frac{1}{1-\alpha}}-c^{\frac{1}{1-\alpha}}.
\end{equation*}
This together with \eqref{eq.flow-f-2} and  \eqref{eq.flow-small-2} allows us to conclude that for all $s\in[0,t_*]$
\begin{equation*}
\begin{split}
\delta(s)\leq F(s)\leq e^{e-\big(e-\log \left(\delta(0)\right)\big)^{2^{-\frac{\alpha}{1-\alpha}}}\exp\big(-(1-\alpha)^{\frac{1}{1-\alpha}}\big(\int_0^t\|u(\tau)\|_{{\rm LogLog}^\alpha}\intd\tau\big)^{\frac{1}{1-\alpha}}\big)} <1.
\end{split}
\end{equation*}
For $\alpha=1$ and $0<c<b<\infty$, we observe that
\begin{equation*}
H_{1}^{-1}(b-c)=e^{\left(e-\exp\left(e^{(b-c)}\right)\right)}=e^e\cdot e^{-\exp\big(e^{b}\cdot e^{-c}\big)}=e^{e-\left(e-\log H_{1}^{-1}(b)\right)^{e^{-c}}}.
\end{equation*}
Combining this with \eqref{eq.flow-f-2} and \eqref{eq.flow-small-2}, it follows that for all $s\in[0,t_*]$
\begin{equation*}
\delta(s)\leq F(s)\leq e^{e-\left(e-\log \left(\delta(0)\right)\right)^{e^{-\int_0^t\|u(\tau)\|_{{\rm LogLog}}\intd\tau}}}<1.
\end{equation*}
Therefore, we can conclude the proof by the continuity argument.
\end{proof}


\section{A priori estimates}

In this  section, we  gather useful \emph{a priori} estimates.
\subsection{\emph{A priori} estimates for the voriticity equation in $Y^\Theta_{\rm ul}(\RR^2)$} In this subsection, our target is to establish
\emph{a priori} estimates for the voriticity equation in $Y^\Theta_{\rm ul}(\RR^2)$. Let us begin by  the uniformly local $L^p$ estimate for
the transport equation.
\begin{proposition}\label{prop-loc-p}
Consider vector field $u\in L^1(\RR^+;L^\infty(\RR^d))$. Assume that  $f(t,x)$ is a smooth solution of the following equation
\begin{equation}\label{eq.TRAN}
\left\{\begin{array}{ll}
\partial_tf+(u\cdot\nabla) f=0,\quad (t,x)\in \mathbb{R}^+\times\mathbb{R}^d,\\
\Div u=0,\\
f|_{t=0}=f_0.
\end{array}\right.
\end{equation}
Then there exists a positive constant $C$, independent of $p$ and $r$, such that
\begin{equation*}
\|f(t)\|_{p,\,r}\leq C\left(\frac1r\right)^{\frac{d}{p}}\Big(r+\int_0^t\|u(\tau)\|_{L^\infty(\RR^d)}\intd\tau\Big)^{\frac{d}{p}}\|f_0\|_{p,\,r}.
\end{equation*}
\end{proposition}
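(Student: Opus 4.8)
The plan is to pass to Lagrangian coordinates and exploit that a divergence-free flow preserves Lebesgue measure. I would denote by $\psi(t,x)$ the flow of $u$, namely the solution of $\psi(t,x)=x+\int_0^t u\big(\tau,\psi(\tau,x)\big)\intd\tau$; for a smooth solution this is a well-defined, measure-preserving diffeomorphism of $\RR^d$, the measure preservation being exactly the place where $\Div u=0$ is used. Setting $\psi_t:=\psi(t,\cdot)$, the transport equation \eqref{eq.TRAN} gives $f(t,\psi_t(x))=f_0(x)$, i.e.\ $f(t,y)=f_0\big(\psi_t^{-1}(y)\big)$. Hence, for an arbitrary ball $B_r(x_0)$, the measure-preserving change of variables $y=\psi_t(z)$ yields
\begin{equation*}
\int_{B_r(x_0)}\big|f(t,y)\big|^p\intd y=\int_{\psi_t^{-1}(B_r(x_0))}\big|f_0(z)\big|^p\intd z.
\end{equation*}

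The heart of the matter is a displacement bound for the flow. With $V(t):=\int_0^t\|u(\tau)\|_{L^\infty(\RR^d)}\intd\tau$, if $z$ is such that $\psi_t(z)\in B_r(x_0)$ then
\begin{equation*}
\big|z-\psi_t(z)\big|=\Big|\int_0^t u\big(\tau,\psi(\tau,z)\big)\intd\tau\Big|\leq V(t),
\end{equation*}
so that $|z-x_0|\leq\big|z-\psi_t(z)\big|+\big|\psi_t(z)-x_0\big|<V(t)+r=:R$. Thus $\psi_t^{-1}(B_r(x_0))\subset B_R(x_0)$, and combining with the previous identity,
\begin{equation*}
\int_{B_r(x_0)}\big|f(t,y)\big|^p\intd y\leq\int_{B_R(x_0)}\big|f_0(z)\big|^p\intd z.
\end{equation*}

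It then remains to bound the right-hand side by $\|f_0\|_{p,r}$. Since $R\geq r$, the ball $B_R(x_0)$ can be covered by at most $N\leq C_d(R/r)^d$ balls $B_r(z_1),\dots,B_r(z_N)$ of radius $r$ (take a maximal $r$-separated subset of $B_R(x_0)$ and do a volume count), whence
\begin{equation*}
\int_{B_R(x_0)}\big|f_0(z)\big|^p\intd z\leq\sum_{j=1}^N\int_{B_r(z_j)}\big|f_0(z)\big|^p\intd z\leq C_d\Big(\frac{R}{r}\Big)^d\|f_0\|_{p,r}^p.
\end{equation*}
Taking the supremum over $x_0\in\RR^d$ and then the $p$-th root, and noting that $C_d^{1/p}\leq\max\{1,C_d\}$ is bounded independently of $p$ and $r$, I would conclude
\begin{equation*}
\|f(t)\|_{p,r}\leq C\Big(\frac{1}{r}\Big)^{\frac{d}{p}}\Big(r+\int_0^t\|u(\tau)\|_{L^\infty(\RR^d)}\intd\tau\Big)^{\frac{d}{p}}\|f_0\|_{p,r},
\end{equation*}
which is the assertion. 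When $d=2$ one may replace the covering step by a direct appeal to Lemma~\ref{lemma-young}(ii).

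The argument is essentially mechanical once the geometry is set up; the only delicate point — and the one I expect to be the main obstacle — is justifying the measure-preserving change of variables, that is, that $\psi_t$ is a genuine measure-preserving bijection of $\RR^d$. This is where $\Div u=0$ together with enough regularity of $u$ (Cauchy--Lipschitz) is genuinely needed; at the level of a priori estimates one can otherwise argue by regularizing $u$ and passing to the limit in the resulting uniform bound.
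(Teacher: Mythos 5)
Your argument is correct, but it is genuinely different from the paper's. You work on the Lagrangian side: represent $f(t)=f_0\circ\psi_t^{-1}$, use that $\Div u=0$ makes $\psi_t$ measure preserving, bound the displacement by $V(t)=\int_0^t\|u(\tau)\|_{L^\infty}\intd\tau$ so that $\psi_t^{-1}(B_r(x_0))\subset B_{r+V(t)}(x_0)$, and then cover the large ball by $C_d\big((r+V(t))/r\big)^d$ balls of radius $r$ (in $d=2$ this is exactly Lemma \ref{lemma-young}(ii)); the $p$-th root of the counting constant is bounded uniformly in $p$, giving the stated bound. The paper never introduces the flow: it multiplies the equation for $\phi_{\lambda r}f$ (a cutoff at scale $\lambda r$) by $|\phi_{\lambda r}f|^{p-2}\phi_{\lambda r}f$, kills the transport term by integration by parts using $\Div u=0$, estimates the commutator term $((u\cdot\nabla)\phi_{\lambda r})f$ by $\frac{C}{\lambda r}\|u\|_{L^\infty}$ times localized $L^p$ norms, runs Gronwall in the uniformly local norm, and finally optimizes the dilation parameter $\lambda\sim\max\{1,\,r^{-1}V(t)\}$. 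What each route buys: yours is shorter and geometrically transparent, but, as you yourself note, it hinges on $\psi_t$ being a well-defined measure-preserving bijection, which requires more regularity of $u$ than the stated $L^1_tL^\infty$ (Osgood or Lipschitz, or a regularization-plus-stability argument you sketch but do not carry out); this is harmless for the way the proposition is used in the paper (a priori bounds for smooth solutions), but the paper's Eulerian cutoff-and-Gronwall proof yields the estimate directly under the hypotheses as written, with only enough smoothness to integrate by parts. Your covering step and the uniformity of the constant in $p$ and $r$ are handled correctly.
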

\begin{proof}
For arbitrary $y\in\RR^2$, and  any positive number $\lambda$, let us denote  $\phi^{y}_{\lambda}(\cdot)=\phi^{y}(\frac{\cdot}{\lambda})$,
 where $\phi^{y}(x)$ is a non-negative smooth function satisfying
\begin{equation}\label{eq.def-cutoff}
\phi^{y}(x)=\begin{cases}1,\quad &x\in B_{1}(y)\\
0,\quad&x\in B^c_{2}(y).
\end{cases}
\end{equation}
In the following part, we will use $\phi(x)$  instead of $\phi^{y}(x)$ for convenience. Obviously, we get from \eqref{eq.TRAN} that
\begin{equation}\label{eq.TRAN-local}
\partial_t\left(\phi_{\lambda r} f\right)+(u\cdot\nabla)\left(\phi_{\lambda r} f\right)=((u\cdot\nabla)\phi_{\lambda r})f.
\end{equation}
Multiplying \eqref{eq.TRAN-local} by $|\phi_{\lambda r} f|^{p-2}\phi_{\lambda r} f$ and then integrating the resulting equation yield that
\begin{equation*}
\begin{split}
&\frac1p\dtd\left\|\phi_{\lambda r}(\cdot)f(t,\cdot)\right\|^p_{L^p(\RR^d)}\\
=&\int_{\RR^d}((u\cdot\nabla)\phi_{\lambda r})f|\phi_{\lambda r} f|^{p-2}\phi_{\lambda r} f\intd y\\
=&\int_{\RR^d}\phi_{2\lambda r}((u\cdot\nabla)\phi_{\lambda r})f|\phi_{\lambda r} f|^{p-2}\phi_{\lambda r} f\intd y\\
\leq &\|u\|_{L^\infty(\RR^d)}\|\nabla\phi_{\lambda r}\|_{L^\infty(\RR^d)}\left\|\phi_{\lambda r}(\cdot)f(t,\cdot)\right\|^{p-1}_{L^p(\RR^d)}\left\|\phi_{2\lambda r}(\cdot)f(t,\cdot)\right\|_{L^p(\RR^d)}\\
\leq&\frac{C}{\lambda r}\|u\|_{L^\infty(\RR^d)}\left\|\phi_{\lambda r}(\cdot)f(t,\cdot)\right\|^{p-1}_{L^p(\RR^d)}\left\|\phi_{2\lambda r}(\cdot)f(t,\cdot)\right\|_{L^p(\RR^d)}.
\end{split}
\end{equation*}
From this, it follows that
\begin{equation}
\dtd\left\|\phi_{\lambda r}(\cdot)f(t,\cdot)\right\|_{L^p(\RR^d)}\leq \frac{C}{\lambda r}\|u(t,\cdot)\|_{L^\infty(\RR^d)}\left\|\phi_{2\lambda r}(\cdot)f(t,\cdot)\right\|_{L^p(\RR^d)}.
\end{equation}
Integrating the above inequality with respect to time $t$, we immediately obtain that
\begin{align*}
\left\|\phi_{\lambda r}(\cdot)f(t,\cdot)\right\|_{L^p(\RR^d)}\leq& \left\|\phi_{\lambda r}(\cdot)f_0\right\|_{L^p(\RR^d)}\\
&+\int_0^t\frac{C}{\lambda r}\|u(\tau,\cdot)\|_{L^\infty(\RR^d)}\left\|\phi_{2\lambda r}(\cdot)f(\tau,\cdot)\right\|_{L^p(\RR^d)}\intd\tau.
\end{align*}
Taking the supremum of the above inequality over all $y\,\in\RR^d$ leads to
\begin{equation*}
\begin{split}
\|f(t)\|_{p,\,\lambda r}\leq&\|f_0\|_{p,\,2\lambda r}+\int_0^t\frac{C}{\lambda r}\|u(\tau,\cdot)\|_{L^\infty(\RR^d)}\left\|f(\tau)\right\|_{p,\,4\lambda r}\intd\tau\\
\leq&C\|f_0\|_{p,\,\lambda r}+\frac{C}{\lambda r}\int_0^t\|u(\tau,\cdot)\|_{L^\infty(\RR^d)}\left\|f(\tau)\right\|_{p,\,\lambda r}\intd\tau.
\end{split}
\end{equation*}
By the Gronwall inequality and $\lambda\geq1$, we have
\begin{equation}
\begin{split}
\|f(t)\|_{p,\,r}\leq\|f(t)\|_{p,\,\lambda r}\leq&\|f_0\|_{p,\,\lambda r}\cdot e^{\frac{C}{\lambda r}\int_0^t\|u(\tau,\cdot)\|_{L^\infty(\RR^d)}\intd\tau}\\
\leq&C\lambda^{\frac{d}{p}}\|f_0\|_{p,\,r}\cdot e^{\frac{C}{\lambda r}\int_0^t\|u(\tau,\cdot)\|_{L^\infty(\RR^d)}\intd\tau}.
\end{split}
\end{equation}
If, moreover, we choose suitable $\lambda$ satisfying
\begin{equation*}
\lambda=\max\left\{\frac{1}{r}\int_0^t\|u(\tau)\|_{L^\infty(\RR^d)}\intd\tau,\quad 1\right\},
\end{equation*}
we finally obtain
\begin{equation*}
\|f(t)\|_{p,\,r}\leq C\Big(1+\frac{1}{r}\int_0^t\|u(\tau)\|_{L^\infty(\RR^d)}\intd\tau\Big)^{\frac{d}{p}}\|f_0\|_{p,\,r},
\end{equation*}
which completes the proof.
\end{proof}
\begin{lemma}\label{Lem-pro-est}
Let $\alpha\in]0,1]$ and $\mathcal{R}$ be  the Riesz operator. For any positive integer $N$, there holds
\begin{equation}
\|\dot S_{-N}\mathcal{R}((u\cdot\nabla) u)\|_{L^\infty(\RR^2)}\leq C2^{-N\alpha}\|u\|^{1+\alpha}_{L^\infty(\RR^2)}\|\omega\|^{1-\alpha}_{p,\,1}+C2^{-N}\|\omega\|^2_{p,\,1},\quad\text{for}\quad p>2.
\end{equation}
\end{lemma}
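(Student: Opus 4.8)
The estimate bounds a very-low-frequency piece of $\mathcal{R}((u\cdot\nabla)u)$ in $L^\infty$, so the natural starting point is to rewrite the nonlinearity in divergence form. Since $\Div u=0$, one has $(u\cdot\nabla)u=\Div(u\otimes u)$, hence
\[
\dot S_{-N}\mathcal{R}\big((u\cdot\nabla)u\big)=\dot S_{-N}\mathcal{R}\,\Div(u\otimes u),
\]
and $\mathcal{R}\,\Div$ is again (a vector of) Calderón–Zygmund type operators composed with one derivative. The point of localizing to frequencies $\lesssim 2^{-N}$ is that the symbol of $\dot S_{-N}\mathcal{R}\,\partial_j$ is supported in a ball of radius $\sim 2^{-N}$ and is homogeneous of degree $1$ there, so its convolution kernel $k_N$ satisfies the scaling $\|k_N\|_{L^1}\lesssim 2^{-N}$ and, more usefully, $\|\nabla k_N\|_{L^1}\lesssim 1$ while lower-order weighted norms of $k_N$ decay like $2^{-N}$. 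Thus morally $\dot S_{-N}\mathcal{R}\Div(u\otimes u)$ "costs" a factor $2^{-N}$ times one derivative hitting $u\otimes u$, i.e. it behaves like $2^{-N}$ times $u\cdot\nabla u$, but we want to trade only a fractional amount $2^{-N\alpha}$ of that gain for an $L^\infty$ bound on $u$ and spend the remaining $2^{-N(1-\alpha)}$ interpolating against a vorticity norm.

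The mechanism for that trade is Bony's paraproduct decomposition applied to $u\otimes u$ together with the identity $\nabla u = \nabla(-\Delta)^{-1}\nabla^\perp\omega$, which lets us convert any derivative landing on $u$ into the vorticity $\omega$ modulo a smoothing operator. Concretely I would split
\[
\dot S_{-N}\mathcal{R}\Div(u\otimes u)=\sum_{j}\dot S_{-N}\mathcal{R}\Div\big(\dot\Delta_j u\otimes \dot S_{j-1}u\big)+\text{(symmetric term)}+\sum_j \dot S_{-N}\mathcal{R}\Div\big(\dot\Delta_j u\otimes\widetilde{\dot\Delta}_j u\big).
\]
Because of the outer frequency cutoff $\dot S_{-N}$, only dyadic blocks with $j\lesssim N$ contribute in the high–low paraproducts, and in the remainder only $j\gtrsim -$const is really needed since the very low blocks are handled by the $\|\cdot\|_{p,1}$ part of the hypotheses (cf. \eqref{eq.local-bern} and Lemma \ref{lemma-young}). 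For each retained block I would estimate $\|\dot\Delta_j u\|_{L^\infty}$ in two ways: by $\|u\|_{L^\infty}$ directly (no cost in frequency) and by Bernstein plus the Biot–Savart law, $\|\dot\Delta_j u\|_{L^\infty}\lesssim 2^{-2j/p}\,2^{-j}\cdot 2^{j}\|\dot\Delta_j\omega\|_{p,1}\lesssim 2^{-2j/p}\|\omega\|_{p,1}$ for $p>2$ (this is exactly where $p>2$ enters, to make $\sum_j 2^{-2j/p}$ summable at high frequencies). Interpolating these two bounds with weights $\alpha$ and $1-\alpha$ gives a per-block contribution $\lesssim 2^{-N}\cdot 2^{j}\cdot \|u\|_{L^\infty}^{1+\alpha}\|\omega\|_{p,1}^{1-\alpha}\cdot(\text{geometric in }j)$ — wait, more carefully: the factor $2^{-N}$ from the kernel and a factor $2^{j}$ from $\Div$ combine, the sum over $j\le N$ of $2^{j-N}$ times a bounded-by-interpolation amplitude is geometric and sums to $O(2^{-N\alpha})$ after one optimizes the split point between "use $\|u\|_{L^\infty}$" and "use $\|\omega\|_{p,1}$" regimes in $j$. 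The genuinely low-frequency remainder and the crude blocks produce the second term $2^{-N}\|\omega\|_{p,1}^2$.

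\textbf{The main obstacle} is bookkeeping the two competing estimates for $\dot\Delta_j u$ so that the geometric series closes with exponent exactly $\alpha$ rather than something worse: one must choose, as a function of $N$, the cutoff scale $j_0$ below which $u\otimes u$ is estimated purely by $\|u\|_{L^\infty}^2$ (contributing $\sum_{j\le j_0}2^{j-N}\|u\|_{L^\infty}^2$) and above which the Biot–Savart/Bernstein bound is used (contributing $\sum_{j_0<j\le N}2^{j-N}\|u\|_{L^\infty}\cdot 2^{-2j/p}\|\omega\|_{p,1}$, say), then optimize $j_0$; this is where the exponent $1+\alpha$ on $\|u\|_{L^\infty}$ and $1-\alpha$ on $\|\omega\|_{p,1}$ are forced, with $\alpha$ ranging over $]0,1]$. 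A secondary technical point is justifying the kernel estimates for $\dot S_{-N}\mathcal{R}\Div$ on the unbounded, non-decaying functions allowed here — this is handled exactly as in the proof of Lemma \ref{local-bern} by rescaling and reducing to the $\|\cdot\|_{1,1}$ / $\|\cdot\|_{p,1}$ control of Lemma \ref{lemma-young}, so no integrability at infinity is needed. I would also remark that for $\alpha=1$ the interpolation is trivial (pure $\|u\|_{L^\infty}^2$) and the estimate reduces to the obvious $2^{-N}\|u\|_{L^\infty}^2$, consistent with the statement after noting $\|u\|_{L^\infty}^2 = \|u\|_{L^\infty}^{1+\alpha}\|\omega\|_{p,1}^{1-\alpha}$ when $\alpha=1$.
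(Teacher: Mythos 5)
Your outline coincides with the paper's strategy (Bony decomposition, estimating each velocity block either by $\|u\|_{L^\infty}$ or through Biot--Savart by the vorticity, with $p>2$ used to sum a high-frequency piece), but the quantitative bookkeeping you propose does not close. First, the cutoff $\dot S_{-N}$ keeps output frequencies $\lesssim 2^{-N}$, so in the paraproduct terms only blocks with $j\lesssim -N$ survive, not $j\lesssim N$; consequently your sums $\sum_{j\le j_0}2^{j-N}\|u\|^2_{L^\infty}$ and $\sum_{j_0<j\le N}2^{j-N}\|u\|_{L^\infty}2^{-2j/p}\|\omega\|_{p,1}$ run over the wrong range, and in the low-frequency regime where these blocks actually live the local Bernstein estimate (Lemma \ref{local-bern} combined with Lemma \ref{lemma-young}) gives only $\|\dot\Delta_j\omega\|_{L^\infty}\lesssim\|\omega\|_{p,1}$, with no factor of the type $2^{-2j/p}$; such decay exists only for $j\ge 0$, where the correct factor is $2^{-j(1-2/p)}$. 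Second, and more seriously, the mechanism you single out as the crux --- a sharp split scale $j_0$ followed by optimization --- cannot produce the stated exponents for an arbitrary prescribed $\alpha\in]0,1]$. With the correct ranges, a paraproduct block $j\le -N$ contributes $2^{j}\|u\|_{L^\infty}\min\bigl\{\|u\|_{L^\infty},\,2^{-j}\|\omega\|_{p,1}\bigr\}$; the Biot--Savart option loses exactly the factor $2^{-j}$ that the derivative gained, so using it on a range of blocks yields $(|j_0|-N)\|u\|_{L^\infty}\|\omega\|_{p,1}$, while the pure-$u$ option below $j_0$ yields $2^{j_0}\|u\|^2_{L^\infty}$, and minimizing over $j_0$ gives at best a logarithmic bound whose shape is dictated by the data, not the bound $C2^{-N\alpha}\|u\|^{1+\alpha}_{L^\infty}\|\omega\|^{1-\alpha}_{p,1}$ for a given $\alpha$. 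The correct move --- which you mention in passing but then abandon for the split --- is the per-block geometric mean $\min\{a,b\}\le a^{\alpha}b^{1-\alpha}$, turning each block into $2^{j\alpha}\|u\|^{1+\alpha}_{L^\infty}\|\omega\|^{1-\alpha}_{p,1}$, which sums over $j\le -N$ to $C2^{-N\alpha}\|u\|^{1+\alpha}_{L^\infty}\|\omega\|^{1-\alpha}_{p,1}$ precisely because $\alpha>0$; this is exactly the paper's interpolation $\|\dot S_{-N+5}\omega\|_{\dot B^{-\alpha}_{\infty,\infty}}\le C\|\omega\|^{\alpha}_{\dot B^{-1}_{\infty,\infty}}\|\dot S_{-N+5}\omega\|^{1-\alpha}_{\dot B^{0}_{\infty,\infty}}$ with $\|\omega\|_{\dot B^{-1}_{\infty,\infty}}\lesssim\|u\|_{L^\infty}$, and no optimization of a split point is involved.

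Third, the source of the term $C2^{-N}\|\omega\|^2_{p,1}$ is misidentified: it is not ``the genuinely low-frequency remainder'' but the high-frequency part $\sum_{k\ge0}\dot\Delta_k u\,\widetilde{\dot\Delta}_k u$ of the remainder, where both factors are bounded by $2^{-k}2^{2k/p}\|\omega\|_{p,1}$ and the series $\sum_{k\ge0}2^{-2k(1-2/p)}$ converges exactly because $p>2$; this, not the summability of $\sum_j2^{-2j/p}$, is where the hypothesis $p>2$ bites, while the low-frequency part of the remainder is treated like the paraproducts and feeds the first term. Finally, if you insist on applying the $L^1$ kernel bound $\|k_N\|_{L^1}\lesssim 2^{-N}$ to the full product and then summing paraproduct blocks in $L^\infty$, the sum $\sum_{j}\|\dot S_{j-1}u\,\dot\Delta_j u\|_{L^\infty}$ diverges; you must, as the paper does, first decompose the output into blocks $\dot\Delta_k$ with $k\le -N$ (each carrying the factor $2^{k}$) so that for each output block only the finitely many input blocks with $|j-k|\le 5$ contribute. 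Repairing these points essentially reproduces the paper's proof.
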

\begin{proof}
According to Bony's paraproduct decomposition, one writes
\begin{equation*}
(u\cdot\nabla) u=T_{u_i}\partial_iu+T_{\partial_iu}u_i+R(u_i,\partial_iu).
\end{equation*}
Let us estimate  the paraproduct terms $T_{u_i}\partial_iu$ and $T_{\partial_iu}u_i$. By the H\"older inequality and the discrete Young inequality, we have
\begin{align}\label{eq.lemma-con-1}
&\|\dot S_{-N}\mathcal{R}(T_{u_i}\partial_iu)\|_{L^\infty(\RR^2)}\\
\leq&C\sum_{k\leq-N}2^{k}\|\dot\Delta_k(T_{u_i}u)\|_{L^\infty(\RR^2)}\nonumber\\
\leq&C\sum_{k\leq-N}2^{k}\sum_{|k-q|\leq5}\|\dot S_{q-1}u\|_{L^\infty(\RR^2)}\|\dot\Delta_qu\|_{L^\infty(\RR^2)}\nonumber\\
\leq&C\sum_{k\leq-N}2^{k\alpha}\sum_{|k-q|\leq5}2^{(k-q)(1-\alpha)}\|u\|_{L^\infty(\RR^2)}2^{-q\alpha}\|\dot\Delta_q\omega\|_{L^\infty(\RR^2)}\nonumber\\
\leq&C2^{-N\alpha}\|u\|_{L^\infty(\RR^2)}\|\dot S_{-N+5}\omega\|_{\dot B_{\infty,\infty}^{-\alpha}(\RR^2)}.\nonumber
\end{align}
Resorting to the interpolation theorem and \eqref{eq.local-bern}, we have
\begin{equation}\label{eq.sharp}
\begin{split}
\|\dot S_{-N+5}\omega\|_{\dot B_{\infty,\infty}^{-\alpha}(\RR^2)}\leq&C \|\omega\|^\alpha_{\dot B^{-1}_{\infty,\infty}(\RR^2)}\|\dot S_{-N+5}\omega\|^{1-\alpha}_{\dot B^0_{\infty,\infty}(\RR^2)}\\
\leq&C \|\omega\|^\alpha_{\dot B^{-1}_{\infty,\infty}(\RR^2)}\sup_{k\leq-N+6}\|\dot\Delta_k\omega\|^{1-\alpha}_{L^\infty(\RR^2)}\\
\leq&C \|u\|^\alpha_{L^\infty(\RR^2)}\|\omega\|^{1-\alpha}_{p,\,1}.
\end{split}
\end{equation}
Inserting \eqref{eq.sharp} into \eqref{eq.lemma-con-1}, we get
\begin{equation*}
\|\dot S_{-N}\mathcal{R}(T_{u_i}\partial_iu)\|_{L^\infty(\RR^2)}\leq C2^{-N\alpha}\|u\|^{1+\alpha}_{L^\infty(\RR^2)}\|\omega\|^{1-\alpha}_{p,1}.
\end{equation*}
Similarly, we obtain
\begin{equation*}
\|\dot S_{-N}\mathcal{R}(T_{\partial_iu}u_i)\|_{L^\infty(\RR^2)}\leq C2^{-N\alpha}\|u\|^{1+\alpha}_{L^\infty(\RR^2)}\|\omega\|^{1-\alpha}_{p,1}.
\end{equation*}
It remains to deal with the remainder term $R(u_i,\partial_iu)$. Thanks to the low-high decomposition technique, we decompose it into two parts as follows:
\begin{equation*}
R(u_i,\partial_iu)=\sum_{k<0}\dot\Delta_ku_i\widetilde{\dot\Delta_k}\partial_iu+
\sum_{k\geq0}\dot\Delta_ku_i\widetilde{\dot\Delta_k}\partial_iu:=R^\natural
+R^\sharp.
\end{equation*}
By using the support property of $R^{\natural}(u_i,\partial_iu)$, we see that  $\|\dot S_{-N}\mathcal{R}(R^\natural)\|_{L^\infty(\RR^2)}$ can be controlled by
\begin{align}\label{eq.lemma-con-2}
&C\sum_{j\leq -N}\|\dot\Delta_jR^\natural\|_{L^\infty(\RR^2)}\\
\leq&C\sum_{j\leq -N}2^{j}\sum_{j-5\leq k<0}\|\dot\Delta_ku_i\|_{L^\infty(\RR^2)}\|\widetilde{\dot\Delta_k}u\|_{L^\infty(\RR^2)}\nonumber\\
\leq&C\sum_{j\leq -N}2^{j\alpha}\sum_{j-5\leq k<0}2^{(j-k)(1-\alpha)}\|u\|_{L^\infty(\RR^2)}2^{-k\alpha}\|\dot\Delta_k\omega\|_{L^\infty(\RR^2)}\nonumber\\
\leq&C2^{-N\alpha}\|u\|_{L^\infty(\RR^2)}\|\Delta_{0}\omega\|_{B^{-\alpha}_{\infty,\infty}(\RR^2)}.\nonumber
\end{align}
By the same argument as in proof \eqref{eq.sharp}, we  infer that
\begin{equation}\label{eq.sharp-2}
\|\Delta_0\omega\|_{\dot B_{\infty,\infty}^{-\alpha}(\RR^2)}\leq C \|u\|^\alpha_{L^\infty(\RR^2)}\|\omega\|^{1-\alpha}_{p,\,1}.
\end{equation}
Plugging \eqref{eq.sharp-2} in \eqref{eq.lemma-con-2}, we obtain
\begin{equation*}
\|\dot S_{-N}\mathcal{R}(R^\natural)\|_{L^\infty(\RR^2)}\leq C2^{-N\alpha}\|u\|^{1+\alpha}_{L^\infty(\RR^2)}\|\omega\|^{1-\alpha}_{p,1}.
\end{equation*}
Finally, since $p>2$, the last term $\|\dot S_{-N}\mathcal{R}(R^\sharp)\|_{L^\infty(\RR^2)}$ can be bounded by
\begin{align*}
C\sum_{j\leq -N}\|\dot\Delta_jR^\sharp\|_{L^\infty(\RR^2)}
\leq&C\sum_{j\leq -N}2^{j}\sum_{k\geq 0}\|\dot\Delta_ku_i\|_{L^\infty(\RR^2)}\|\dot\Delta_ku\|_{L^\infty(\RR^2)}\\
\leq&C2^{-N}\sum_{k\geq 0}2^{-2k}\|\dot\Delta_k\omega\|_{L^\infty(\RR^2)}^2\\
\leq&C2^{-N}\sum_{k\geq 0}2^{-2k}2^{\frac{4k}{p}}\|\omega\|^2_{p,\,1}
\leq C2^{-N}\|\omega\|^2_{p,\,1}.
\end{align*}
Collecting all these estimates yields the desired result.
\end{proof}
\begin{proposition}\label{prop.local-infty}
Let $u_0\in L^\infty(\RR^2)$ and $\omega_0\in Y^\Theta_{\rm ul}(\RR^2)$ with $\Theta\in \mathcal{A}_1$. Assume that $u$ is a smooth solution of \eqref{eq.Euler}. Then we have
\begin{equation*}
\|u(t)\|_{L^\infty(\RR^2)}+\|\omega(t)\|_{Y^\Theta_{\rm ul}(\RR^2)}\leq C(t),
\end{equation*}
where the positive smooth function $C(t)$ depends on the initial data.
\end{proposition}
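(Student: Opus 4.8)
The plan is to close a \emph{logarithmic-in-time} estimate for $\|u(t)\|_{L^\infty}$ of the form \eqref{eq.log-u}, and then feed it into the Osgood lemma using the admissibility condition \eqref{eq.converse-os} for $\Theta\in\mathcal{A}_1$. First I would recall the two cheap ingredients. On the one hand, since $\omega$ solves the transport equation \eqref{eq.vor} with divergence-free $u$, Proposition \ref{prop-loc-p} (applied with $d=2$, $r=1$) gives the uniformly-local $L^p$ bound
\begin{equation*}
\|\omega(t)\|_{p,\,1}\leq C\Big(1+\int_0^t\|u(\tau)\|_{L^\infty(\RR^2)}\intd\tau\Big)^{\frac{2}{p}}\|\omega_0\|_{p,\,1}\leq C\Big(1+\int_0^t\|u(\tau)\|_{L^\infty(\RR^2)}\intd\tau\Big)^{\frac{2}{p}}\Theta(p)\|\omega_0\|_{Y^\Theta_{\rm ul}(\RR^2)},
\end{equation*}
uniformly in $p$. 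On the other hand, $u=K\ast\omega$ only controls $u$ in terms of $\omega$ up to low frequencies, so I split $u=\dot S_{-N}u+(\mathrm{Id}-\dot S_{-N})u$ for an integer $N=N(t)$ to be chosen. The high-frequency part is handled by Bernstein/\eqref{eq.local-bern}: $\|(\mathrm{Id}-\dot S_{-N})u\|_{L^\infty}\lesssim\sum_{j\geq -N}\|\dot\Delta_j u\|_{L^\infty}\lesssim\sum_{j\geq -N}2^{-j}\|\dot\Delta_j\omega\|_{L^\infty}\lesssim 2^{N}\|\omega\|_{p,\,1}$ after summing the geometric tail (here one pays a factor like $2^{N(1-2/p)}$, harmless for $p$ large), or more simply one bounds the low-frequency-in-$\omega$ contribution directly.

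The crucial term is the low-frequency part $\dot S_{-N}u$. Using the momentum equation in the form $\partial_t u=-\mathbb{P}\,\mathrm{div}(u\otimes u)=-\mathcal{R}((u\cdot\nabla)u)$ with $\mathcal{R}$ a matrix of Calderón–Zygmund operators (this is exactly where the "new estimate for the convection term" enters), I would write
\begin{equation*}
\dot S_{-N}u(t)=\dot S_{-N}u_0-\int_0^t\dot S_{-N}\mathcal{R}\big((u\cdot\nabla)u\big)(\tau)\intd\tau,
\end{equation*}
and apply Lemma \ref{Lem-pro-est} to the integrand: for $p>2$,
\begin{equation*}
\|\dot S_{-N}\mathcal{R}((u\cdot\nabla)u)\|_{L^\infty(\RR^2)}\leq C2^{-N\alpha}\|u\|^{1+\alpha}_{L^\infty(\RR^2)}\|\omega\|^{1-\alpha}_{p,\,1}+C2^{-N}\|\omega\|^2_{p,\,1}.
\end{equation*}
Since $\|\dot S_{-N}u_0\|_{L^\infty}\leq\|u_0\|_{L^\infty}$, combining the two pieces gives a bound of the shape
\begin{equation*}
\|u(t)\|_{L^\infty(\RR^2)}\leq C\|u_0\|_{L^\infty}+C2^{N}\|\omega(t)\|_{p,\,1}+C2^{-N\alpha}\int_0^t\|u(\tau)\|^{1+\alpha}_{L^\infty}\|\omega(\tau)\|^{1-\alpha}_{p,\,1}\intd\tau+C2^{-N}\int_0^t\|\omega(\tau)\|^2_{p,\,1}\intd\tau.
\end{equation*}
Now insert the Proposition \ref{prop-loc-p} bound for $\|\omega\|_{p,\,1}$, \emph{optimize in $N$} (balancing $2^{N}$ against $2^{-N\alpha}$ forces $2^{N}\sim(\int_0^t\|u\|^{1+\alpha}_{L^\infty}\intd\tau)^{1/(1+\alpha)}$ up to the $\|\omega\|_{p,1}$ weights), and \emph{optimize in $p$} using Definition \ref{def-take-log} so that $\inf_{p}\{a^{1/p}\Theta(p)\}=(T\Theta)(a)=\Phi(a)$. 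Raising to the power $1+\alpha$ this is designed to yield precisely \eqref{eq.log-u}:
\begin{equation*}
\|u(t)\|^{1+\alpha}_{L^\infty(\RR^2)}\leq C(t)\,\Phi\Big(1+\int_0^t\|u(\tau)\|^{1+\alpha}_{L^\infty(\RR^2)}\intd\tau\Big)\cdot\Big(1+\int_0^t\|u(\tau)\|^{1+\alpha}_{L^\infty(\RR^2)}\intd\tau\Big).
\end{equation*}

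Finally, set $W(t):=1+\int_0^t\|u(\tau)\|^{1+\alpha}_{L^\infty}\intd\tau$, so $W'(t)=\|u(t)\|^{1+\alpha}_{L^\infty}\leq C(t)\,\Phi(W(t))\,W(t)$; since the admissibility condition \eqref{eq.converse-os} says $\int^\infty \frac{\intd x}{x\Phi(x)}=\infty$, the Osgood lemma gives a finite a priori bound $W(t)\leq C(t)$ on every finite interval, hence $\|u(t)\|_{L^\infty}\leq C(t)$. Plugging this back into Proposition \ref{prop-loc-p} and dividing by $\Theta(p)$, the supremum over $p\geq1$ is finite, giving $\|\omega(t)\|_{Y^\Theta_{\rm ul}(\RR^2)}\leq C(t)$. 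The main obstacle — and the heart of the argument — is getting the convection-term estimate of Lemma \ref{Lem-pro-est} to interact correctly with the double optimization in $N$ and $p$: one must check that the exponents conspire so that the right-hand side is genuinely of the form $\Phi(W)\cdot W$ with the \emph{same} $\Phi=T\Theta$ appearing, rather than something super-linear in $W$ that Osgood cannot handle; the $\|u\|^{1+\alpha}_{L^\infty}$ scaling (as opposed to $\|u\|^2_{L^\infty}$) and the interpolation inequality \eqref{eq.sharp} are exactly what make this work for $\alpha\in]0,1[$.
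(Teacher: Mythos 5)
Your proposal is correct and follows essentially the same route as the paper's proof: the same low/high frequency splitting with Lemma \ref{Lem-pro-est} applied to $\dot S_{-N}\mathcal{P}((u\cdot\nabla)u)$, the uniformly local $L^p$ bound of Proposition \ref{prop-loc-p}, the double optimization in $N$ and $p$ producing $\Phi=T\Theta$ as in \eqref{eq.log-u}, and the Osgood argument via the $\mathcal{A}_1$ admissibility \eqref{eq.converse-os}. The only cosmetic differences are that the paper treats the intermediate frequencies by interpolation plus Young (arriving at the same $C2^{N}\|\omega\|_{p,1}$ bound you get directly) and that it disposes of the extra $\Phi^{2(1+\alpha)}$ term coming from $\|\omega\|_{p,1}^2$ by a H\"older argument showing $\int_1^\infty\Phi^{-2(1+\alpha)}(t)\intd t=\infty$ — precisely the point you flagged as needing care.
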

\begin{proof}
Thanks to the low-high decomposition technique, one can write
\begin{equation}\label{eq.low-cut-Esti}
\|u(t)\|_{L^\infty(\RR^2)}\leq \|\dot S_{-N}u\|_{L^\infty(\RR^2)}+\sum_{q\geq-N}\|\dot\Delta_qu\|_{L^\infty(\RR^2)},
\end{equation}
where $N$ is a positive integer to be specified later.

Let us recall that
\begin{equation*}
\partial_tu+\mathcal{P}((u\cdot\nabla) u)=0,
\end{equation*}
where the Leray projector is defined by
\[\mathcal{P}=Id-(\mathcal{R}_{i}\mathcal{R}_{j}).\]
Performing the low frequency cut-off operator $\dot S_{-N}$ to the above equality, we get
\begin{equation}\label{eq.low-cut}
\partial_t\dot S_{-N}u+\dot S_{-N}\mathcal{P}((u\cdot\nabla) u)=0.
\end{equation}
Integrating \eqref{eq.low-cut} in time $t$ and using Lemma \ref{Lem-pro-est}, one has  for $p>2$,
\begin{equation*}
\begin{split}
\|\dot{S}_{-N}u(t)\|_{L^\infty(\RR^2)}\leq&\|\dot{S}_{-N}u_0\|_{L^\infty(\RR^2)}+\int_0^t\left\|\dot S_{-N}\mathcal{P}((u\cdot\nabla) u)(\tau)\right\|_{L^\infty(\RR^2)}\intd\tau\\
\leq&C+C2^{-N\alpha}\int_0^t\left(\|u(\tau)\|^{1+\alpha}_{L^\infty(\RR^2)}\|\omega(\tau)\|^{1-\alpha}_{p,\,1}+\|\omega(\tau)\|^2_{p,\,1}\right)\intd\tau.
\end{split}
\end{equation*}
For the high frequency part, by resorting to \eqref{eq.local-bern}, the interpolation theorem and the H\"older inequality, we easily find that for $\alpha\in]0,1[$ and $p>2$
\begin{align*}
&\sum_{q\geq-N}\|\dot\Delta_qu\|_{L^\infty(\RR^2)}\\
\leq&\sum_{-N\leq q\leq-1}\|\dot\Delta_qu\|_{L^\infty(\RR^2)}+\sum_{q\geq0}\|\dot\Delta_qu\|_{L^\infty(\RR^2)}\\
\leq&C\sum_{-N\leq q\leq-1}2^{-q\alpha}\|\dot\Delta_qu\|^{1-\alpha}_{L^\infty(\RR^2)}\|\dot\Delta_q\omega\|^{\alpha}_{L^\infty(\RR^2)}
+C\sum_{q\geq0}2^{-q\big(1-\frac2p\big)}\|\omega\|_{p,\,1}\\
\leq&C2^{N\alpha}\|u(t)\|^{1-\alpha}_{L^\infty(\RR^2)}\|\omega(t)\|^{\alpha}_{p,\,1}+C\|\omega(t)\|_{p,\,1}\\
\leq&\frac12\|u(t)\|_{L^\infty(\RR^2)}+C2^N\|\omega(t)\|_{p,\,1}.
\end{align*}
Combining these estimates and then plugging the resulting estimate in \eqref{eq.low-cut-Esti}, we immediately obtain that
\begin{align*}
&\|u(t)\|_{L^\infty(\RR^2)}\\
\leq&C2^{-N\alpha}\int_0^t\left(\|u(\tau)\|^{1+\alpha}_{L^\infty(\RR^2)}
\|\omega(\tau)\|^{1-\alpha}_{p,\,1}+\|\omega(\tau)\|^2_{p,\,1}\right)\intd\tau+C2^N\|\omega\|_{p,\,1}+C\\
\leq&C2^{-N\alpha}\sup_{\tau\in[0,t]}\|\omega(\tau)\|^{1-\alpha}_{p,\,1}\int_0^t\|u(\tau)\|^{1+\alpha}_{L^\infty(\RR^2)}\intd\tau+Ct2^{-N\alpha}\sup_{\tau\in[0,t]}\|\omega(\tau)\|^2_{p,\,1}\\
&+C2^{N}\|\omega(t)\|_{p,\,1}+C.
\end{align*}
Take a suitable integer $N$ such that
\begin{equation*}
2^{N}\sim\Big(\int_0^t\|u(\tau)\|^{1+\alpha}_{L^\infty(\RR^2)}\intd\tau\Big)^{\frac{1}{1+\alpha}}
\Big(1+\sup_{\tau\in[0,t]}\|\omega(\tau)\|_{p,\,1}\Big)^{\frac{-\alpha}{1+\alpha}}+1.
\end{equation*}
From this, it follows that
\begin{align*}
\|u(t)\|_{L^\infty(\RR^2)}\leq&C\Big(\sup_{\tau\in[0,t]}\|\omega(\tau)\|_{p,\,1}\Big)^{\frac{1}{1+\alpha}}\Big(\int_0^t\|u(\tau)\|^{1+\alpha}_{L^\infty(\RR^2)}\intd\tau\Big)^{\frac{1}{1+\alpha}}
\\&+Ct2^{-N\alpha}\sup_{\tau\in[0,t]}\|\omega(\tau)\|^2_{p,\,1}+C.
\end{align*}
Furthermore, we have
\begin{align}\label{eq.w-loc-p-0}
\|u(t)\|^{1+\alpha}_{L^\infty(\RR^2)}\leq& C\sup_{\tau\in[0,t]}\|\omega(\tau)\|_{p,\,1}\int_0^t\|u(\tau)\|^{1+\alpha}_{L^\infty(\RR^2)}\intd\tau\\
&+Ct^{1+\alpha}\Big(\sup_{\tau\in[0,t]}\|\omega(\tau)\|_{p,\,1}\Big)^{2(1+\alpha)}+C.\nonumber
\end{align}
Next, applying Proposition \ref{prop-loc-p} to the vorticity equation, we can conclude that for any $p\geq1$
\begin{equation}\label{eq.w-loc-p}
\|\omega(t)\|_{p,\,1}\leq C\|\omega_0\|_{p,\,1}\left(1+\|u\|_{L^1_tL^\infty(\RR^2)}\right)^{\frac{2}{p}}.
\end{equation}
Inserting \eqref{eq.w-loc-p} into \eqref{eq.w-loc-p-0} leads to
\begin{align*}
\|u(t)\|^{1+\alpha}_{L^\infty(\RR^2)}\leq&C\|\omega_0\|_{p,\,1}\left(1+\|u\|_{L^1_tL^\infty(\RR^2)}\right)^{\frac{2}{p}}\int_0^t\|u(\tau)\|^{1+\alpha}_{L^\infty(\RR^2)}\intd\tau
\\&+Ct^{1+\alpha}\|\omega_0\|^{2(1+\alpha)}_{p,\,1}\left(1+\|u\|_{L^1_tL^\infty(\RR^2)}\right)^{\frac{4(1+\alpha)}{p}}+C\\
\leq&C\|\omega_0\|_{p,\,1}\Big(1+t^{\frac{2\alpha}{p(1+\alpha)}}\Big)\left(1+\|u\|_{L^{1+\alpha}_tL^\infty(\RR^2)}\right)^{\frac{2}{p}}\int_0^t\|u(\tau)\|^{1+\alpha}_{L^\infty(\RR^2)}\intd\tau
\\&+Ct^{1+\alpha}\|\omega_0\|^{2(1+\alpha)}_{p,\,1}\Big(1+t^{\frac{4\alpha}{p}}\Big)\left(1+\|u\|_{L^{1+\alpha}_tL^\infty(\RR^2)}\right)^{\frac{4(1+\alpha)}{p}}+C\\
\leq&C(1+t^2)\|\omega_0\|_{p,\,1}\left(1+\|u\|_{L^{1+\alpha}_tL^\infty(\RR^2)}\right)^{\frac{2}{p}}\int_0^t\|u(\tau)\|^{1+\alpha}_{L^\infty(\RR^2)}\intd\tau
\\&+C(1+t^4)\|\omega_0\|^{2(1+\alpha)}_{p,\,1}\left(1+\|u\|_{L^{1+\alpha}_tL^\infty(\RR^2)}\right)^{\frac{4(1+\alpha)}{p}}+C.
\end{align*}
Thus, the quantity $\|u(t)\|^{1+\alpha}_{L^\infty(\RR^2)}$ can be bounded by
\begin{equation}\label{eq.w-loc-1}
\begin{split}
&C(1+t^2)\sup_{2<q<\infty}\tfrac{\|\omega_0\|_{q,\,1}}{\Theta(q)}\cdot \Theta(p)\Big(1+\|u\|_{L^{1+\alpha}_tL^\infty(\RR^2)}\Big)^{\frac{2}{p}}\int_0^t\|u(\tau)\|^{1+\alpha}_{L^\infty(\RR^2)}\intd\tau\\
&+C(1+t^4)\Big(\sup_{2<q<\infty}\tfrac{\|\omega_0\|_{q,\,1}}{\Theta(q)}\Big)^{2(1+\alpha)}\cdot\big(\Theta(p)\big)^{2(1+\alpha)}
\Big(1+\|u\|_{L^{1+\alpha}_tL^\infty(\RR^2)}\Big)^{\frac{4(1+\alpha)}{p}}+C\\
\leq&C(1+t^2)\sup_{2<q<\infty}\tfrac{\|\omega_0\|_{q,\,1}}{\Theta(q)}\cdot \Theta\Big(\frac p2\Big)\Big(1+\|u\|_{L^{1+\alpha}_tL^\infty(\RR^2)}\Big)^{\frac{2}{p}}\int_0^t\|u(\tau)\|^{1+\alpha}_{L^\infty(\RR^2)}\intd\tau
\\&+C(1+t^4)\Big(\sup_{2<q<\infty}\tfrac{\|\omega_0\|_{q,\,1}}{\Theta(q)}\Big)^{2(1+\alpha)}\cdot\Big(\Theta\Big(\tfrac{p}{2}\Big)\Big)^{2(1+\alpha)}
\Big(1+\|u\|_{L^{1+\alpha}_tL^\infty(\RR^2)}\Big)^{\frac{4(1+\alpha)}{p}}+C,
\end{split}
\end{equation}
where we have used the relation that $\Theta(p)\leq C\Theta\left(\frac{p}{2}\right)$ for all $p>2$ because $\Theta(\cdot)$ satisfies the $\Delta_2$ condition.

 Since $\|\omega_0\|_{q,\,1}\leq C\|\omega_0\|_{4,\,1}$ for each $q\in[1,4]$, we just need to take the infimum of \eqref{eq.w-loc-1} over all $p\in]4,\infty[$ to obtain
\begin{align*}
&\|u(t)\|^{1+\alpha}_{L^\infty(\RR^2)}\\
\leq&C(1+t^2)\|\omega_0\|_{Y^\Theta_{\rm ul}(\RR^2)}\cdot\Phi\Big(1+\int_0^t\|u(\tau)\|^{1+\alpha}_{L^\infty(\RR^2)}\intd\tau\Big)\cdot\int_0^t\|u(\tau)\|^{1+\alpha}_{L^\infty(\RR^2)}\intd\tau
\\&+C(1+t^4)\|\omega_0\|^{2(1+\alpha)}_{Y^\Theta_{\rm ul}(\RR^2)}\cdot\Phi^{2(1+\alpha)}\Big(1+\int_0^t\|u(\tau)\|^{1+\alpha}_{L^\infty(\RR^2)}\intd\tau\Big)+C\\
\leq&C(1+t^2)\|\omega_0\|_{Y^\Theta_{\rm ul}(\RR^2)}\cdot\Theta\Big(\log\Big(e+\int_0^t\|u(\tau)\|^{1+\alpha}_{L^\infty(\RR^2)}\intd\tau\Big)\Big)\cdot\int_0^t\|u(\tau)\|^{1+\alpha}_{L^\infty(\RR^2)}\intd\tau
\\&+C(1+t^4)\|\omega_0\|^{2(1+\alpha)}_{Y^\Theta_{\rm ul}(\RR^2)}\cdot\Theta^{2(1+\alpha)}\Big(\log\Big(e+\int_0^t\|u(\tau)\|^{1+\alpha}_{L^\infty(\RR^2)}\intd\tau\Big)\Big)+C.
\end{align*}
Since $\Theta\in\mathcal{A}_1$, the admissible  condition guarantees that
$\int_e^\infty\frac{1}{s\Theta(\log s)}\intd s=\infty$. By the H\"older inequality, we have
\begin{equation*}
\begin{split}
\int_1^\infty\frac{1}{t\Theta(\log t)}\intd t\leq&\Big(\int_1^\infty t^{\frac{-2(1+\alpha)}{(1+2\alpha)}} \intd t\Big)^{\frac{1+2\alpha}{2(1+\alpha)}}\Big(\int_1^\infty\Theta^{-2(1+\alpha)}(\log t)\intd t\Big)^{\frac{1}{2(1+\alpha)}}\\
=&C(\alpha)\Big(\int_1^\infty\Theta^{-2(1+\alpha)}(\log t)\intd t\Big)^{\frac{1}{2(1+\alpha)}},
\end{split}
\end{equation*}
which implies that $\int_1^\infty\Phi^{-2(1+\alpha)}(\log t)\intd t=\infty$. This allows us to construct
\[\mathcal{M}(x)=\int_1^x\frac{\mathrm{d}\,t}{\Phi^{2(1+\alpha)}(\log t)}.\]
Following the proof of the Osgood theorem as in [\cite{BCD11}, Lemma 3.4], one can conclude
\begin{equation}\label{eq.u-uniform-0}
\int_0^t\|u(\tau)\|_{L^\infty(\RR^2)}\intd\tau\leq C(t).
\end{equation}
Plugging \eqref{eq.u-uniform-0} in \eqref{eq.w-loc-p} enables us to infer that
$
\|\omega(t)\|_{Y^\Theta_{\rm ul}(\RR^2)}\leq C(t).
$
Taking $\alpha=\frac12$ in \eqref{eq.w-loc-p-0}, we easily find that
\begin{equation*}
\|u\|^{\frac32}_{L^\infty_tL^\infty(\RR^2)}\leq C(t)\|u\|^{\frac12}_{L^\infty_tL^\infty(\RR^2)}\int_0^t\|u(\tau)\|_{L^\infty(\RR^2)}\intd\tau+C(t)\leq\frac12\|u\|_{L^\infty_tL^\infty(\RR^2)}^{\frac32}+C(t).
\end{equation*}
This implies that $\|u(t)\|_{L^\infty(\RR^2)}\leq C(t)$ and then the proof is complete.
\end{proof}
Now, we turn to the study of the regularity of the voriticity in the Spanne space $\mathcal{M}_{\varphi}$.
\begin{proposition}\label{prop.local-Spanne}
Let $\alpha\in]0,1/2]$, $u_0\in L^\infty(\RR^2)$, $\omega_0\in Y^\Theta_{\rm ul}(\RR^2)\cap \mathcal{M}_{\varphi}(\RR^2)$ with $\Theta\in \mathcal{A}_1$, and $\varphi(r)=\log^\alpha(e-\log r)$. Assume that $u$ is a smooth solution of \eqref{eq.Euler}. Then we have
\begin{equation}
\|u(t)\|_{L^\infty(\RR^2)}+\|\omega(t)\|_{Y^\Theta_{\rm ul}(\RR^2)\cap \mathcal{M}_{\varphi}(\RR^2)}\leq C(t),
\end{equation}
where the positive smooth function $C(t)$ depends on the initial data and $\alpha$.
\end{proposition}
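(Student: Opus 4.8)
The plan is, in view of Proposition~\ref{prop.local-infty} (which already gives $\|u(t)\|_{L^\infty(\RR^2)}+\|\omega(t)\|_{Y^\Theta_{\rm ul}(\RR^2)}\leq C(t)$), to propagate only the Spanne norm $M(t):=\|\omega(t)\|_{\mathcal{M}_{\varphi}(\RR^2)}$, by a Gronwall argument built on the area--preserving flow. Let $\psi(t,\cdot)$ be the flow of the smooth solution $u$; since $\Div u=0$ its Jacobian is $1$, so for every ball $B_r(x)$
\[
\int_{B_r(x)}|\omega(t,y)|\intd y=\int_{\psi(t,\cdot)^{-1}(B_r(x))}|\omega_0(z)|\intd z,\qquad m\big(\psi(t,\cdot)^{-1}(B_r(x))\big)=\pi r^2 .
\]
The three ingredients will be: (a) control of the modulus of continuity of $u(t)$ by $M(t)$; (b) conversion of this into a distortion bound for $\psi(t,\cdot)$ via Proposition~\ref{Prop-flowmap-log}; (c) a covering of $A_{t,r,x}:=\psi(t,\cdot)^{-1}(B_r(x))$ whose cardinality is counted using area preservation.

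\emph{Step (a): velocity regularity.} First I would prove, for $\alpha\in]0,1]$, that $\|u(t)\|_{{\rm LogLog}^\alpha(\RR^2)}\leq C(t)\big(1+M(t)\big)$. On frequencies $\lesssim 2^{p}$ the truncated Biot--Savart (Riesz) kernel has $L^1$-norm $\lesssim p$ in $\RR^2$, so $\|\nabla S_{p+1}u(t)\|_{L^\infty}\leq Cp\,\|S_{p+1}\omega(t)\|_{L^\infty}$; estimating the kernel of $S_{p+1}$ against the defining inequality of $\mathcal{M}_{\varphi}$ gives $\|S_{p+1}\omega(t)\|_{L^\infty}\leq C\varphi(2^{-p})M(t)\leq C(\log(e+p))^{\alpha}M(t)$, hence $\|\nabla S_{p+1}u(t)\|_{L^\infty}\leq Cp(\log(e+p))^{\alpha}M(t)$. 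Splitting $u=S_Nu+(I-S_N)u$ at $N\sim\log_2(1/\delta)$ and summing the high frequencies dyadically, $|u(t,x)-u(t,y)|\lesssim \delta\log(1/\delta)\big(\log\log(1/\delta)\big)^{\alpha}M(t)$ for $|x-y|=\delta<1$, which is precisely the ${\rm LogLog}^\alpha$ modulus, the very low frequencies being absorbed into $\|u(t)\|_{L^\infty}\leq C(t)$. Consequently, writing $V_\alpha(t):=\int_0^t\|u(\tau)\|_{{\rm LogLog}^\alpha}\intd\tau$, Proposition~\ref{prop.local-infty} yields $V_\alpha(t)\leq C(t)+C\int_0^tM(\tau)\intd\tau$.

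\emph{Steps (b)--(c): flow distortion and covering.} With $u(t)\in{\rm LogLog}^\alpha$, Proposition~\ref{Prop-flowmap-log} provides an increasing modulus $\mu_t$, with $\mu_t(s)\geq s$, such that $|\psi(t,z)-\psi(t,z')|\leq\mu_t(|z-z'|)$ for small $|z-z'|$, $\mu_t$ being governed by $V_\alpha(t)$ through \eqref{eq.flow-evolution-2}--\eqref{eq.flow-small-2}. Fix $x$ and $r\in]0,1/2[$ small (how small being allowed to depend on $t$, via $L_\alpha$), and set $\rho:=\tfrac12\mu_t^{-1}(r)\leq r$, so that $\mu_t(2\rho)=r$. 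By the Besicovitch covering theorem, $A:=A_{t,r,x}$ is covered by balls $\{B_\rho(z_k)\}_{k\leq K}$ of bounded overlap $N_0$ with $z_k\in A$; since each $B_\rho(z_k)$ then lies in the $2\rho$--neighbourhood of $A$, which $\psi(t,\cdot)$ maps into $B_{r+\mu_t(2\rho)}(x)=B_{2r}(x)$, area preservation gives $K\pi\rho^2\leq N_0\,m(B_{2r}(x))=4\pi N_0 r^2$. As $\rho<1/2$, applying the definition of $\mathcal{M}_{\varphi}$ on each $B_\rho(z_k)$ and summing,
\[
\frac{1}{m(B_r(x))}\int_{B_r(x)}|\omega(t,y)|\intd y=\frac{1}{\pi r^2}\int_A|\omega_0|\leq \frac{K}{\pi r^2}\,\|\omega_0\|_{\mathcal{M}_{\varphi}}\,\varphi(\rho)\,\pi\rho^2\leq 4N_0\,\|\omega_0\|_{\mathcal{M}_{\varphi}}\,\varphi(\rho).
\]
It remains to compare $\varphi(\rho)$ with $\varphi(r)$: inverting the explicit bound \eqref{eq.flow-evolution-2} one finds, for $r$ small, that $\log(e-\log\rho)\leq C_\alpha\big(\log(e-\log r)+V_\alpha(t)^{1/(1-\alpha)}\big)$, so for the weight $\varphi(r)=\log^\alpha(e-\log r)$ of Theorem~\ref{Coro-Exi-bmo}, using $(a+b)^{\alpha}\leq a^{\alpha}+b^{\alpha}$ (valid since $\alpha\leq1$) and $\varphi\geq1$,
\[
\varphi(\rho)\leq C_\alpha\big(1+V_\alpha(t)^{\alpha/(1-\alpha)}\big)\,\varphi(r).
\]
For the bounded range $r\in[L_\alpha,1/2[$ outside the scope of \eqref{eq.flow-small-2}, and for $r\geq1/2$, one bounds $\int_{B_r(x)}|\omega(t)|$ directly by $\lesssim(1+r^2)\|\omega(t)\|_{Y^\Theta_{\rm ul}}\leq C(t)$ and uses that $\varphi$ is bounded below on $]0,1/2[$; this settles those $r$.

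\emph{Closing the loop.} Combining the last two displays, $M(t)\leq C(t)+C\|\omega_0\|_{\mathcal{M}_{\varphi}}\big(1+V_\alpha(t)^{\alpha/(1-\alpha)}\big)$; inserting $V_\alpha(t)\leq C(t)+C\int_0^tM(\tau)\intd\tau$ and using $x^{\alpha/(1-\alpha)}\leq 1+x$ (valid because $\alpha\leq1/2$ forces $\alpha/(1-\alpha)\leq1$, which is where the hypothesis is used), one gets $M(t)\leq C(t)+C\|\omega_0\|_{\mathcal{M}_{\varphi}}\big(1+\int_0^tM(\tau)\intd\tau\big)$, whence $M(t)\leq C(t)$ by Gronwall's lemma; together with Proposition~\ref{prop.local-infty} this is the claim. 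I expect the main obstacle to be Steps (a)--(b) together with the composition $\varphi\circ\mu_t^{-1}$: one must show that the logarithmic weight $\varphi$ grows only by a $t$--dependent factor under the double--exponentially strong distortion of a ${\rm LogLog}^\alpha$ flow, and bookkeeping the exponents there (and in the ensuing Gronwall estimate) is precisely what confines the argument to $\alpha\in]0,1/2]$.
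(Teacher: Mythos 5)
Your plan follows the same architecture as the paper's proof: Proposition \ref{prop.local-infty} for the $Y^\Theta_{\rm ul}$ part, the embedding $\|u\|_{{\rm LogLog}^\alpha}\lesssim\|u\|_{L^\infty}+\|\omega\|_{\mathcal{M}_\varphi}$ (your Step (a) is the paper's claim \eqref{claim-embed}), the flow estimate of Proposition \ref{Prop-flowmap-log} combined with measure preservation and a covering of the transported ball, and a Gronwall closure that uses $\alpha/(1-\alpha)\leq1$. Your single-scale covering of $\psi_t^{-1}(B_r(x))$ by balls of radius $\rho=\tfrac12\mu_t^{-1}(r)$, with the count $K\lesssim (r/\rho)^2$ from area preservation and the comparison $\varphi(\rho)\leq C_\alpha\big(1+V_\alpha(t)^{\alpha/(1-\alpha)}\big)\varphi(r)$, is correct and is in fact a genuine simplification of the paper's Whitney decomposition and the layered sums $\Theta_k$ of Lemma \ref{Lem-small-2}.

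There is, however, a real gap at the intermediate radii $r\in[L_\alpha,1/2[$, which the paper treats in a separate Step 2 and which you dispose of with the crude bound $\frac{1}{\varphi(r)m(B_r(x))}\int_{B_r(x)}|\omega(t)|\leq C(t)/L_\alpha^2$. This is not a harmless ``$C(t)$'': by \eqref{eq.flow-small-2} one has $-\log L_\alpha\sim\exp\big(c_\alpha(1+V_\alpha(t)^{1/(1-\alpha)})\big)$, so $1/L_\alpha^2$ is doubly exponential in $V_\alpha(t)$, and $V_\alpha(t)$ is controlled only through the unknown $M$; feeding $M(t)\lesssim C(t)\exp\big(\exp(cV_\alpha(t)^{1/(1-\alpha)})\big)$ and $V_\alpha(t)\leq C(t)+C\int_0^tM(\tau)\,{\rm d}\tau$ into your loop gives an integral inequality whose comparison ODE blows up in finite time, so the final ``whence $M(t)\leq C(t)$ by Gronwall'' does not follow. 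The repair is exactly the paper's device: for $r\in[L_\alpha,1/2[$ cover $B_r(x)$ by $O\big((r/L_\alpha)^2\big)$ balls of radius comparable to $L_\alpha$, so the average over $B_r(x)$ is dominated by the supremum of averages at scale $L_\alpha$, which your small-radius estimate already controls; moreover, for $r\leq L_\alpha$ the constraint $\log(e-\log r)\geq\log(e-\log L_\alpha)\gtrsim V_\alpha(t)^{1/(1-\alpha)}$ upgrades your comparison to $\varphi(\rho)\leq C_\alpha\varphi(r)$ with a constant independent of $V_\alpha$ (this is the paper's \eqref{est.<r}), so the intermediate range then costs only a factor $\varphi(L_\alpha)\lesssim1+V_\alpha(t)^{\alpha/(1-\alpha)}\lesssim1+V_\alpha(t)$, the inequality for $M$ stays affine in $\int_0^tM$, and the Gronwall argument closes as you intended. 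Without this extra step (or the sharpened constant in the small-$r$ regime) the proof is incomplete.
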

Before proving this proposition, we first review some properties of flow maps established in \cite{B-K-1}. Assume that $\psi\in\mathcal{L}$ (the group of all bi-Lipschitz homeomorphism of $\mathbb{R}^d$) is measure preserving. We know that $\psi(B_r(x))$ is a bounded open set and $m(\psi(B_{r}(x)))=m(B_r(x))$. By using the Whitney covering theorem, one can conclude that there exists a bounded collection $\{O_k\}_k$ such that
\begin{enumerate}
  \item[\rm(A)]\label{eq.star-1} $\{2O_k\}_k$ is a bounded covering:
\begin{equation}
\psi(B_r(x))\subset\bigcup_k2O_k;
\end{equation}
   \item[\rm(B)]\label{eq.star-2}The balls $O_k$ are pairwise disjoint and  for each $k$, $O_k\subset\psi(B_r(x))$;

   \item[\rm(C)]\label{eq.star-3}The Whitney property is verified:
\begin{equation}
r_{O_k}\approx \intd\left(O_k,\,\psi(B_r(x))^c\right).
\end{equation}
\end{enumerate}
\noindent Clearly, the measure preserving property ensures that $m(O_k)\leq m(B_r(x))$ for all $k$, which implies that
$r_{O_k}\leq r_B$ for all $k$. Moreover, it entails  the following useful lemma.
\begin{lemma}\label{Lem-small-2}
Let $\alpha\in[0,1[$, $L_k:=\sum_{e^{-(k+1)}r< r_j\leq e^{-k}r}m\left(O_j\right)$ for any $k\geq1$ and
\begin{equation*}
N_\alpha:=\begin{cases}
\exp{2^{\frac{\alpha}{1-\alpha}}}\big(1+\big((1-\alpha)V_{{\rm LogLog}^\alpha}(t)\big)^{\frac{1}{1-\alpha}}\big)-e+\log r,&\alpha\in[0,1[,\\
\exp(e^{V_{{\rm LogLog}}(t)})-e+\log r,&\alpha=1,
\end{cases}
\end{equation*}
where $V_{{\rm LogLog}^\alpha}(t):=\int_0^t\|u(\tau)\|_{{\rm LogLog}^\alpha}\intd\tau$. Then there exists a universal constant $C>0$ such that for each $k\geq\max\{1, N_\alpha\}$
\begin{equation}\label{eq.theta-k}
L_k\leq
\begin{cases}Cr e^{e-(e+k-\log r)^{2^{-\frac{\alpha}{1-\alpha}}}\exp\big(-(1-\alpha)^{\frac{1}{1-\alpha}}\big(V_{{\rm LogLog}^\alpha}(t)\big)^{\frac{1}{1-\alpha}}\big)},&\alpha\in[0,1[,\\
Cre^{e-\left(e+k-\log r\right)^{\exp{\big(-V_{{\rm LogLog}}}(t)\big)}},&\alpha=1.
\end{cases}
\end{equation}
\end{lemma}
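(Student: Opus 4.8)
The plan is to reduce the bound on $\Theta_k = \sum_{e^{-(k+1)}r < r_j \le e^{-k}r} m(O_j)$ to a statement about the flow $\psi$, and then invoke Proposition \ref{Prop-flowmap-log}. First I would observe that, by the Whitney property \eqref{eq.star-3}, each ball $O_j$ appearing in the sum defining $\Theta_k$ satisfies $r_{O_j} \approx \mathrm{d}(O_j, \psi(B_r(x))^c)$, so that for the indices in the $k$-th block we have $\mathrm{d}(O_j, \psi(B_r(x))^c) \lesssim e^{-k}r$. Consequently the union of the $O_j$'s in the $k$-th block is contained in the ``annular collar'' $A_k := \{y \in \psi(B_r(x)) : \mathrm{d}(y, \psi(B_r(x))^c) \lesssim e^{-k}r\}$, and since the $O_j$ are pairwise disjoint (property \eqref{eq.star-2}), $\Theta_k \le m(A_k)$. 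So everything comes down to estimating the measure of this thin neighbourhood of the boundary of the image of the ball.

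The key step is then to transport $A_k$ back by the flow: since $\psi$ is measure-preserving, $m(A_k) = m(\psi^{-1}(A_k))$, and $\psi^{-1}(A_k)$ is contained in $B_r(x) \setminus B_{\rho}(x)$ for a suitable radius $\rho$ determined by the modulus of continuity of $\psi^{-1}$. More precisely, if a point $y \in A_k$ has $\psi^{-1}(y) = z$ with $|z - x| < \rho$, then $\psi(B_\rho(x))$ would have to stick out past $y$ toward the boundary; quantitatively, a point $z$ with $\mathrm{dist}(z,\partial B_r(x))$ not too small must map to a point $\psi(z)$ with $\mathrm{dist}(\psi(z), \partial \psi(B_r(x)))$ bounded below, by applying the continuity estimate \eqref{eq.flow-evolution-2} of Proposition \ref{Prop-flowmap-log} to the pair $z$ and the nearest boundary point of $B_r(x)$ (and using that $\psi$ maps $\partial B_r(x)$ into $\partial \psi(B_r(x))$). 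Inverting the relation between $e^{-k}r$ (the collar width in the image) and $r - \rho$ (the collar width in the source) via the explicit form of the bound in \eqref{eq.flow-evolution-2}, one finds exactly a radius $\rho = \rho(k)$ with $r - \rho \lesssim r\, e^{e - (e + k - \log r)^{2^{-\alpha/(1-\alpha)}}\exp(-(1-\alpha)^{1/(1-\alpha)}(V_{\mathrm{LogLog}^\alpha}(t))^{1/(1-\alpha)})}$ in the case $\alpha \in [0,1[$, and the analogous expression for $\alpha = 1$. The hypothesis $k \ge \max\{1, N_\alpha\}$ is precisely what guarantees that the relevant arguments fall into the regime where the smallness condition \eqref{eq.flow-small-2} of Proposition \ref{Prop-flowmap-log} is met (so that \eqref{eq.flow-evolution-2} is applicable), and that $\rho(k) \in\, ]0,r[$.

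Finally, $\Theta_k \le m(A_k) = m(\psi^{-1}(A_k)) \le m(B_r(x) \setminus B_{\rho}(x)) = \pi(r^2 - \rho^2) \le 2\pi r (r - \rho)$, which gives the claimed bound after absorbing constants. I expect the main obstacle to be the bookkeeping in the second step: carefully checking that the image under $\psi$ of the ``good'' inner region $B_{\rho(k)}(x)$ genuinely avoids the collar $A_k$, which requires using that $\psi$ restricted to $\overline{B_r(x)}$ is a homeomorphism onto $\overline{\psi(B_r(x))}$ carrying boundary to boundary (so that interior points whose preimages are deep inside $B_r(x)$ cannot be close to $\partial\psi(B_r(x))$), together with the correct monotone inversion of the double-exponential modulus in \eqref{eq.flow-evolution-2}. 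The rest is a routine substitution into the formulas of Proposition \ref{Prop-flowmap-log}.
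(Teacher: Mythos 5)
Your proposal is correct and follows essentially the same route as the paper: the Whitney property and disjointness bound $\Theta_k$ by the measure of the boundary collar of width $\sim e^{-k}r$ inside $\psi(B_r(x))$, measure preservation pulls this back to $B_r(x)$, Proposition \ref{Prop-flowmap-log} (with $k\ge\max\{1,N_\alpha\}$ ensuring the smallness condition \eqref{eq.flow-small-2}) converts the image collar width $Ce^{-k}r$ into the double-exponential width near $\partial B_r(x)$, and the collar/annulus measure bound $\lesssim r\cdot(\text{width})$ finishes. The one step to phrase carefully — which the paper itself glosses over — is that turning $|\psi(z)-\psi(y)|\le Ce^{-k}r$ into smallness of $|z-y|$ uses the modulus \eqref{eq.flow-evolution-2} for the inverse (time-reversed) flow, which satisfies the same estimate with the same $V_{{\rm LogLog}^\alpha}(t)$, rather than the forward estimate applied to $z$ and its nearest boundary point as one sentence of your sketch suggests.
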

\begin{proof}
Here, we just give the proof of estimate \eqref{eq.theta-k} for $\alpha\in[0,1[$, because the proof for $\alpha=1$ is similar. Thanks to the preservation of Lebesgue measure by $\psi(x,t)$, we find
\begin{equation*}
 m\left(\big\{ y\in \psi(B): \intd\big(y, \psi(B)^c\big)\leq Ce^{-k}r\big\}\right)=m\left(\big\{ x\in B: \intd\big(\psi(x), \psi(B)^c\big)\leq Ce^{-k}r\big\}\right),
\end{equation*}
This together with the fact $\psi(B)^c=\psi(B^c)$ ensures
\begin{equation*}
L_k\leq m\left(\{ x\in B: \intd(\psi(x), \psi(B^c))\leq Ce^{-k}r\}\right):= D_k.
\end{equation*}
Since $\psi(\partial B)$ is the frontier of $\psi(B)$ and $\intd(\psi(x),
\psi(B^c))=\intd(\psi(x), \partial \psi(B))$, we have
\begin{equation*}
D_k\subset \big\{ x\in B: \exists\, y\in \partial B \,\,\text{with}\,\,
|\psi(x)- \psi(y)|\leq Ce^{-k}r\big\}.
\end{equation*}
The condition on $k$ allows us to use Proposition \ref{Prop-flowmap-log} to get
\begin{equation*}
D_k\subset\Big \{ x\in B: \intd(x,\partial B)\leq
C e^{e-(e+k-\log r)^{2^{-\frac{\alpha}{1-\alpha}}}\exp\big(-(1-\alpha)^{\frac{1}{1-\alpha}}(V_{{\rm LogLog}^\alpha}(t))^{\frac{1}{1-\alpha}}\big)}\Big\},
\end{equation*}
which implies the desired estimate \eqref{eq.theta-k} for $\alpha\in[0,1[$.
\end{proof}
\begin{proof}[Proof of Proposition \ref{prop.local-Spanne}]
From Proposition \ref{prop.local-infty}, we know the following estimate
\begin{equation}\label{eq.Local-infty}
\|u(t)\|_{L^\infty(\RR^2)}+\|\omega(t)\|_{Y^\Theta_{\rm ul}(\RR^2)}\leq C(t) \quad\text{for each}\; t>0.
\end{equation}
This together with Lemma \ref{lemma-young} allows us to conclude that for $r>1$
\begin{equation}\label{eq.small-big}
\frac{1}{m(B_r(x))}\int_{B_r(x)}|\omega(y,t)|\intd y\leq C\frac{1}{m(B_1(x))}\|\omega(t)\|_{1,\,1}\leq C\|\omega(t)\|_{Y^\Theta_{\rm ul}(\RR^2)} \leq C(t).
\end{equation}
So, we just have  to show the case where $r\in]0,1/e[$. Since $\Div u=0$ and $\omega(t,x)=\omega_0(\psi_t^{-1}(x))$, we have
\begin{equation*}
\frac{1}{m(B_r(x))}\int_{B_r(x)}|\omega_0(\psi^{-1}_{t}(y))|\intd y=\frac{1}{m(\psi(B_r(x)))}\int_{\psi(B_r(x))}|\omega_0|\intd y.
\end{equation*}
Moreover, applying the Whitney covering theorem, we find that
\begin{equation}
\begin{split}
\frac{1}{m(\psi(B_r(x)))}\int_{\psi(B_r(x))}|\omega_0|\intd y\leq &\frac{1}{m(\psi(B_r(x)))}\sum_{k} m(2O_k)\frac{1}{m(2O_k)}
\int_{2O_k}\left|\omega_0(y)\right|\intd y\\
\leq& C\frac{1}{m(\psi(B_r(x)))}\sum_{k} L_k\frac{1}{(e^{-k}r)^2}
\|\omega_0\|_{1,\,e^{-k}r}\\
\leq& C\frac{1}{m(\psi(B_r(x)))}\sum_{k} L_k\log^\alpha\big(e+k-\log r\big)\|\omega_0\|_{\mathcal{M}_{\varphi}}.
\end{split}
\end{equation}
We split the series into two parts as follows:
\begin{equation*}
\begin{split}
&\frac{1}{m\left(\psi(B_r(x))\right)}\sum_{k}L_k\log^\alpha\big(e+k-\log r\big)\\
=&\frac{1}{m\left(\psi(B_r(x))\right)}\sum_{k=0}^N L_k\log^\alpha\big(e+k-\log r\big)\\&+\frac{1}{m\left(\psi(B_r(x))\right)}\sum_{k>N} L_k\log^\alpha\big(e+k-\log r\big),
\end{split}
\end{equation*}
where the positive integer $N$ will be fixed later.

 Since
 $$\frac{1}{\log^\alpha(e-\log r)}\frac{1}{m(\psi(B_r(x)))}\int_{\psi(B_r(x))}|\omega_0|\intd y\leq C\|w(0\|_{1,\,1}$$
  for all $r>1$, we focus on the case where $0\leq r\leq1$.

\noindent\textit{Step 1:} We first consider the case where
$$r\leq r_\varphi:=e^{-\exp{2^{\frac{\alpha}{1-\alpha}}}\big(1+\big((1-\alpha)V_{{\rm LogLog}^\alpha}(t)\big)^{\frac{1}{1-\alpha}}\big)}$$
 which implies
\begin{equation}\label{eq.small-r}
\exp{2^{\frac{\alpha}{1-\alpha}}}\big(1+\big((1-\alpha)V_{{\rm LogLog}^\alpha}(t)\big)^{\frac{1}{1-\alpha}}\big)\leq-\log r.
\end{equation}
Denote by $N\ge N_{\alpha}$, a undetermined constant.  For $k\leq N$, a simple calculation yields
\begin{equation}\label{eq.<N}
\begin{split}
&\frac{1}{\log^\alpha(e-\log r)}\frac{1}{m\left(\psi(B_r(x))\right)}\sum_{k=0}^{N}L_k\log^\alpha\big(e+k-\log r\big)\\
\leq&\frac{\log^\alpha\left(e+N-\log r\right)}{\log^\alpha(e-\log r)}\frac{1}{m\left(\psi(B_r(x))\right)}\sum_{k=0}^{N} L_k
\leq \frac{\log^\alpha\left(e+N-\log r\right)}{\log^\alpha(e-\log r)}.
\end{split}
\end{equation}
As for $k> N$, Lemma \ref{Lem-small-2} and inequality \eqref{eq.small-r} allow us to obtain that
\begin{align*}
&\frac{1}{\log^\alpha(e-\log r)}\frac{1}{m\left(\psi(B_r(x))\right)}\sum_{k>N} L_k{\log^\alpha\big(e+k-\log r\big)}\\
\leq&\tfrac{C}r\sum_{k>N}e^{e-(e+k-\log r)^{2^{-\frac{\alpha}{1-\alpha}}}\exp\big(-(1-\alpha)^{\frac{1}{1-\alpha}}\big(\int_0^t\|u(\tau)\|_{{\rm LogLog}^\alpha}\intd\tau\big)^{\frac{1}{1-\alpha}}\big)}\tfrac{\log^\alpha\big(e+k-\log r\big)}{\log^\alpha(e-\log r)}\\
\leq&\tfrac{C}re^{e-(e+N-\log r)^{2^{-\frac{\alpha}{1-\alpha}}}\exp\big(-(1-\alpha)^{\frac{1}{1-\alpha}}\big(\int_0^t\|u(\tau)\|_{{\rm LogLog}^\alpha}\intd\tau\big)^{\frac{1}{1-\alpha}}\big)}\tfrac{\log^\alpha\big(e+N-\log r\big)}{\log^\alpha(e-\log r)}.
\end{align*}
This together with \eqref{eq.<N} enables us to conclude that
\begin{equation*}
\begin{split}
&\frac{1}{\log^\alpha(e-\log r)}\frac{1}{m(\psi(B_r(x)))}\int_{\psi(B_r(x))}|\omega_0|\intd y\\
\leq &C\Big(1+\frac1re^{e-(e+N-\log r)^{2^{-\frac{\alpha}{1-\alpha}}}\exp\big(-(1-\alpha)^{\frac{1}{1-\alpha}}\big(\int_0^t\|u(\tau)\|_{{\rm LogLog}^\alpha}\intd\tau\big)^{\frac{1}{1-\alpha}}\big)}\Big)\\
&\times\frac{\log^\alpha\big(e+N-\log r\big)}{\log^\alpha(e-\log r)}.
\end{split}
\end{equation*}
Taking $N=(e-\log r)^{2^{\frac{\alpha}{1-\alpha}}}\exp{\big(2^{\frac{\alpha}{1-\alpha}}}\big((1-\alpha)V_{{\rm LogLog}^\alpha}(t)\big)^{\frac{1}{1-\alpha}}\big)-e+\log r$, we easily find that
\begin{equation*}
\frac1re^{e-(e+N-\log r)^{2^{-\frac{\alpha}{1-\alpha}}}\exp\big(-(1-\alpha)^{\frac{1}{1-\alpha}}\big(\int_0^t\|u(\tau)\|_{{\rm LogLog}^\alpha}\intd\tau\big)^{\frac{1}{1-\alpha}}\big)}\leq C.
\end{equation*}
Consequently, we have by \eqref{eq.small-r}
\begin{equation}\label{est.<r}
\frac{1}{\log^\alpha(e-\log r)}\frac{1}{m(\psi(B_r(x)))}\int_{\psi(B_r(x))}|\omega_0|\intd y\leq C.
\end{equation}
\noindent\textit{Step 2:} We are now in a position to show the case where $r_\varphi \leq r<1$. By Lemma \ref{lemma-young} and estimate \eqref{est.<r}, we get
\begin{equation*}
\begin{split}
&\frac{1}{\log^\alpha(e-\log r)}\frac{1}{m(\psi(B_r(x)))}\int_{\psi(B_r(x))}|\omega_0|\intd y\\
\leq&C\frac{1}{\log^\alpha(e-\log r)}\frac{1}{m(B_{r_\varphi}(x))}\|\omega_0\circ\psi\|_{1,\,r_\varphi}\\
\leq &C\frac{\log^\alpha(e-\log r_\varphi)}{\log^\alpha(e-\log r)}\\
\leq&C\big(1+V_{{\rm LogLog}^\alpha}(t)\big)^{\frac{\alpha}{1-\alpha}}.
\end{split}
\end{equation*}
Since $\alpha\in]0,1/2]$, we finally get that for  $\varphi(r)=\log^\alpha(e-\log r)$
\begin{equation*}
\|\omega(t)\|_{\mathcal{M}_{\varphi}(\RR^2)}\leq C\big(1+V_{{\rm LogLog}^\alpha}(t)\big)^{\frac{\alpha}{1-\alpha}}\leq C\Big(1+\int_0^t\|u(\tau)\|_{{\rm LogLog}^\alpha}\intd\tau\Big).
\end{equation*}
Thus, our main task is now to show that
\begin{equation}\label{claim-embed}
\|u\|_{V_{{\rm LogLog}^\alpha}}\leq C\|u\|_{L^\infty(\RR^2)}+C\|\omega(t)\|_{\mathcal{M}_{\varphi}(\RR^2)}\quad \text{with}\quad \varphi(r)=\log^\alpha(e-\log r).
\end{equation}
Recall from \cite[Proposition 2.111.]{BCD11}  that
\begin{equation}\label{recall}
\frac1C\|u\|_{V_{{\rm LogLog}^\alpha}}\leq \sup_{j\geq 1}\frac{\|S_j\nabla u\|_{L^\infty(\RR^2)}}{j\log^\alpha(1+j)}\leq C\|u\|_{V_{{\rm LogLog}^\alpha}}.
\end{equation}
By the Bernstein inequality and Lemma \ref{local-bern}, we see that for  $\varphi(r)=\log^\alpha(e-\log r)$
\begin{align*}
\|S_j\nabla u\|_{L^\infty(\RR^2)}\leq& \|S_1\nabla u\|_{L^\infty(\RR^2)}+\sum_{1\leq k<j}\|\Delta_k\nabla u\|_{L^\infty(\RR^2)}\\
\leq& C\| u\|_{L^\infty(\RR^2)}+C\sum_{1\leq k<j}\|\Delta_k\omega\|_{L^\infty(\RR^2)}\\
\leq& C\| u\|_{L^\infty(\RR^2)}+C\sum_{1\leq k<j}2^{2k}\|\omega\|_{1,\,2^{-k}}\\
\leq& C\| u\|_{L^\infty(\RR^2)}+Cj\log^\alpha(e+j)\|\omega\|_{\mathcal{M}_{\varphi}(\RR^2)}.
\end{align*}
This implies claim \eqref{claim-embed} and compeltes the proof of Proposition \ref{prop.local-Spanne}.
\end{proof}

\subsection{A logarithmic loss of regularity in the borderline space $\rm L^\alpha bmo$}

The target of this subsection is to show an estimate  with a logarithmic loss of regularity in the borderline space $\rm L^\alpha bmo$ by developing the classical analysis tools such as the John-Nirenberg inequality.
\begin{proposition}\label{prop-Lbmo}
Let $u_0\in L^\infty(\RR^2)$ and its voriticity $\omega_0\in {\mathrm{L^\alpha bmo}(\RR^2)}$ with $\alpha\in[0,1]$. Assume that $u$ is a smooth solution of \eqref{eq.Euler}. Then we have
\begin{equation}\label{eq.lbm0-result-1}
\|u(t)\|_{L^\infty(\RR^2)}+\|\omega(t)\|_{{\rm L^{\alpha-1}bmo}(\RR^2)} \leq C(t)\left(1+\|\omega_0\|_{\rm L^{\alpha}bmo(\RR^2)}\right)\quad\text{for}\quad\alpha\in[0,1[,
\end{equation}
and
\begin{equation}\label{eq.lbm0-result-2}
\|u(t)\|_{L^\infty(\RR^2)}+\|\omega(t)\|_{{\rm L_{log}bmo}(\RR^2)} \leq C(t)\left(1+\|\omega_0\|_{\rm Lbmo(\RR^2)}\right).
\end{equation}
Here, $C(t)$ is a positive function depending on the initial data.
\end{proposition}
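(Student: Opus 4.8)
The plan is to derive both estimates from one input: a global‑in‑time bound on $\|u(t)\|_{L^\infty(\RR^2)}$, which unlocks the weighted local $L^p$ estimate of Proposition~\ref{prop-loc-p} with a controlled drift. First I would check that ${\rm L^\alpha bmo}(\RR^2)$ embeds into a Yudovich space to which Proposition~\ref{prop.local-infty} applies: from ${\rm L^\alpha bmo}(\RR^2)\hookrightarrow{\rm bmo}(\RR^2)$ (Proposition~\ref{prop-lbmo-properties}) together with the classical John--Nirenberg inequality --- the case $\alpha=0$ of Lemma~\ref{lemma-p-alpha} --- one has $\|\omega_0\|_{p,\,1}\le Cp\,\|\omega_0\|_{{\rm L^\alpha bmo}}$, so $\omega_0\in Y^{\Theta}_{\rm ul}(\RR^2)$ with $\Theta(p)=p\in\mathcal A_1$. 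Proposition~\ref{prop.local-infty} then yields $\|u(t)\|_{L^\infty(\RR^2)}\le C(t)$ with $C(t)$ depending on the data, hence $V(t):=\int_0^t\|u(\tau)\|_{L^\infty(\RR^2)}\intd\tau\le C(t)$; this already settles the $\|u(t)\|_{L^\infty}$ contributions in \eqref{eq.lbm0-result-1}--\eqref{eq.lbm0-result-2}.

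Next I would estimate the two pieces of the ${\rm L^{\alpha-1}bmo}$ (resp.\ ${\rm L_{log}bmo}$) norm of $\omega(t)$. For the non‑oscillatory piece, Proposition~\ref{prop-loc-p} with $p=1$, $r=1$ gives $\sup_{x}\int_{B_1(x)}|\omega(t,y)|\intd y=\|\omega(t)\|_{1,\,1}\le C(1+V(t))^2\|\omega_0\|_{1,\,1}\le C(t)\|\omega_0\|_{{\rm L^\alpha bmo}}$, since $\|\omega_0\|_{1,\,1}$ is one of the terms defining $\|\omega_0\|_{{\rm L^\alpha bmo}}$. For the oscillatory piece, fix $B_r(x)$ with $r\in\,]0,1[$ and simply discard the cancellation:
\[
\Avg_{B_r(x)}\bigl|\omega(t)-\Avg_{B_r(x)}(\omega(t))\bigr|\le 2\,\Avg_{B_r(x)}|\omega(t)|\le C\,r^{-\frac{2}{p}}\|\omega(t)\|_{p,\,r}\qquad (p\ge1).
\]
Applying Proposition~\ref{prop-loc-p} to $\|\omega(t)\|_{p,\,r}$ and using $r<1$, $\|\omega_0\|_{p,\,r}\le\|\omega_0\|_{p,\,1}$ yields
\[
\Avg_{B_r(x)}|\omega(t)|\le C\,r^{-\frac{2}{p}}\Bigl(1+\tfrac{V(t)}{r}\Bigr)^{\frac{2}{p}}\|\omega_0\|_{p,\,1}\le C\,r^{-\frac{4}{p}}\bigl(1+V(t)\bigr)^{\frac{2}{p}}\|\omega_0\|_{p,\,1}.
\]
I would then choose $p=p(r):=\max\{-\log r,\,2\}$, so that $r^{-4/p}=e^{4(-\log r)/p}\le e^4$ and $(1+V(t))^{2/p}\le 1+V(t)\le C(t)$, giving $\Avg_{B_r(x)}|\omega(t)|\le C(t)\|\omega_0\|_{p(r),\,1}$. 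Feeding in the generalized John--Nirenberg inequality (Lemma~\ref{lemma-p-alpha}) together with $p(r)\le e-\log r$ gives $\|\omega_0\|_{p(r),\,1}\le C(e-\log r)^{1-\alpha}\|\omega_0\|_{{\rm L^\alpha bmo}}$ for $\alpha\in[0,1[$, and $\|\omega_0\|_{p(r),\,1}\le C\log(e-\log r)\,\|\omega_0\|_{{\rm Lbmo}}$ for $\alpha=1$. Multiplying by the weight of ${\rm L^{\alpha-1}bmo}$ (comparable to $(e-\log r)^{\alpha-1}$ on $\,]0,1[$), resp.\ of ${\rm L_{log}bmo}$ (comparable to $\log^{-1}(e-\log r)$), cancels this growth \emph{exactly}; taking $\sup$ over $x$ and $r\in\,]0,1[$ and adding the non‑oscillatory piece and the $L^\infty$ bound yields \eqref{eq.lbm0-result-1}--\eqref{eq.lbm0-result-2} (the factor $1+\|\omega_0\|_{{\rm L^\alpha bmo}}$ on the right being merely cosmetic, as every constant may depend on the data).

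The heart of the matter --- and the reason a logarithmic \emph{loss} is unavoidable at this level --- is the exponent balance in the last step: the drift factor $V(t)/r$ in Proposition~\ref{prop-loc-p} forces the large index $p(r)\sim-\log r$, at which the local $L^p$ norm of the datum blows up like $(-\log r)^{1-\alpha}$; it is precisely the generalized John--Nirenberg inequality Lemma~\ref{lemma-p-alpha} (resting in turn on Corollary~\ref{Coro.JN} and Proposition~\ref{lem-log}) that pins this growth to the exact power $1-\alpha$, so that the weaker weight of ${\rm L^{\alpha-1}bmo}$ can absorb it. The remaining subtleties are minor: verifying $\Theta\in\mathcal A_1$ so that Proposition~\ref{prop.local-infty} may legitimately be invoked; handling $r\uparrow1$ through the cap $p(r)\ge2$, where all weights remain bounded; and the routine tracking of constants depending only on $t$ and the data. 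One could instead retain the cancellation by writing $\omega(t)=\omega_0\circ\psi_t^{-1}$ and running the Whitney‑covering argument of Proposition~\ref{prop.local-Spanne}, but the direct route above is shorter and already sharp at the level asserted.
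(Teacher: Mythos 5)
Your proposal is correct and follows essentially the same route as the paper: embed the datum into a Yudovich class to obtain the global bound on $\|u(t)\|_{L^\infty}$ via Proposition \ref{prop.local-infty}, combine H\"older's inequality with the uniformly local $L^p$ estimate of Proposition \ref{prop-loc-p} (with its $(1+V(t))^{2/p}$ growth), take $p\sim -\log r$, and convert $\|\omega_0\|_{p,\,1}$ back to the ${\rm L^\alpha bmo}$ norm through the generalized John--Nirenberg bound of Lemma \ref{lemma-p-alpha}. The only cosmetic differences are that you fix $p(r)=\max\{-\log r,2\}$ explicitly where the paper invokes the infimum formula of Lemma \ref{lem-take-log}, you apply Proposition \ref{prop-loc-p} at scale $r$ rather than at scale $1$ (the harmless extra factor $r^{-2/p}$ is absorbed by the same choice of $p$), and you use the cruder embedding $\Theta(p)=p$ for the velocity bound instead of $\Theta(p)=p^{1-\alpha}$, resp.\ $\log(1+p)$.
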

\begin{proof}
According to Lemma \ref{lemma-p-alpha}, we know that $\mathrm{L^\alpha bmo}(\RR^2)$ continuously embeds into $Y^{\Theta}_{\rm ul}(\RR^2)$ with
\begin{equation*}
\Theta(p)=\begin{cases}
p^{1-\alpha},&\alpha\in[0,1[;\\
\log(1+p),&\alpha=1.
\end{cases}
\end{equation*}
Thus, it is easy to verify that $\Theta(p)$ belongs to the class $\mathcal{A}_1$. Then, we immediately obtain, by using Proposition \ref{prop.local-infty}, that
\begin{equation}\label{eq.lbmo-0}
\|u(t)\|_{L^\infty(\RR^2)}+\|\omega(t)\|_{Y^\Theta_{\rm ul}(\RR^2)}\leq C(t).
\end{equation}
For every $r\in]0,1[$ and $x\in\RR^2$, using the H\"older inequality,  we deduce that for $\alpha\in[0,1[$
\begin{align*}
\frac{1}{m(B_r(x))}\int_{B_r(x)}|\omega(t,y)|\intd y\leq&\Big(\frac{1}{m(B_r(x))}\Big)^{\frac1p}\|\omega(t)\|_{p,\,1}\\
\leq&p^{1-\alpha}\Big(\frac{1}{m(B_r(x))}\Big)^{\frac1p}\sup_{1\leq p<\infty}\frac{\|\omega\|_{p,\,1}}{p^{1-\alpha}}.
\end{align*}
According to the arbitrariness of $p$, we  conclude from the estimate \eqref{eq.w-loc-p} and Lemma \ref{lem-take-log} that
\begin{equation*}
\left(-\log r\right)^{\alpha-1}\frac{1}{m(B_r(x))}\int_{B_r(x)}|\omega(t,y)|\intd y\leq C\sup_{1\leq p<\infty}\frac{\|\omega_0\|_{p,\,1}}{p^{1-\alpha}}\cdot\Big(1+\int_0^t\|u(\tau)\|_{L^\infty(\RR^2)}\intd\tau\Big)^2.
\end{equation*}
This together with \eqref{eq.lbmo-0} implies
\begin{equation}\label{eq.lbmo-1}
\left(-\log r\right)^{\alpha-1}\frac{1}{m(B_r(x))}\int_{B_r(x)}|\omega(t,y)|\intd y\leq C(t)\sup_{1\leq p<\infty}\frac{\|\omega_0\|_{p,\,1}}{p^{1-\alpha}}.
\end{equation}
On the other hand, using Lemma \ref{lemma-p-alpha} again, we observe that $\sup_{1\leq p<\infty}\frac{\|\omega_0\|_{p,\,1}}{p^{1-\alpha}}\leq C\|\omega_0\|_{\rm L^\alpha bmo}$ for $\alpha\in]0,1]$. Inserting this into \eqref{eq.lbmo-0} and then taking the supremum over all $r\in]0,1[$ entails
\begin{equation*}
\|\omega(t)\|_{{\rm L^{\alpha-1}bmo}(\RR^2)} \leq C(t)\|\omega_0\|_{{\rm L^{\alpha}bmo}(\RR^2)}.
\end{equation*}
Now, we are in a position to show \eqref{eq.lbm0-result-2}. Firstly, we see that (one may take $p=\log a$)
\begin{equation*}
\inf_{p\geq1}\log(1+p)\cdot a^{p}\leq e\log(1+\log a),\quad \text{for}\quad a>e.
\end{equation*}
Thus, using the same argument as above, we infer that
\begin{align*}
\big(\log(1-\log r)\big)^{-1}\frac{1}{m(B_r(x))}\int_{B_r(x)}|\omega(t,y)|\intd y\leq& C(t)\sup_{1\leq p<\infty}\frac{\|\omega_0\|_{p,\,1}}{\log(1+p)}\\
\leq& C(t) \|\omega_0\|_{\mathrm{Lbmo}(\RR^2)}.
\end{align*}
This completes the proof.
\end{proof}
\subsection{}
In the following part, we mainly focus on the proof of Theorem \ref{prop.reg-pser}.
\begin{proof}[Proof of Theorem \ref{prop.reg-pser}]
By Lemma \ref{lemma-p-alpha} and Proposition \ref{prop.local-infty}, one  concludes that
\begin{equation}\label{eq.final-per-1}
\|u(t)\|_{L^\infty(\RR^2)}+\sup_{1\leq p<\infty}\frac{\|\omega(t)\|_{p,\,1}}{p}\leq C(t).
\end{equation}
Let us fixed a ball $B_r(x)\subset\RR^2$. Our task is now to bound the following quantity
\begin{equation*}
\frac{1}{m\left(B_r(x)\right)}\int_{B_r(x)}\left|\omega(y)-\Avg_{B_r(x)}(\omega)\right|\intd y.
\end{equation*}
In order to do this, we split it into two cases.

\noindent\textbf{Case 1:} $r\leq e^{-4V_{\rm Lip}(t)}$.

Simple calculations lead to
\begin{align*}
&\Big(\frac{1}{m\left(B_r(x)\right)}\int_{B_r(x)}\left|\omega(y)-\Avg_{B_r(x)}(\omega)\right|^p\intd y\Big)^{\frac1p}\\
=&\Big(\frac{1}{m\left(B_r(x)\right)}\int_{\psi(B_r(x))}\left|\omega_0(y)-\Avg_{\psi(B_r(x))}(\omega_0)\right|^p\intd y\Big)^{\frac1p}\\
=&\Big(\frac{1}{m\left(B_r(x)\right)}\int_{\psi(B_r(x))}\Big|\big(\omega_0(y)-\Avg_{B_{r_\psi}(\psi(x))}(\omega_0)\big)\\
&+\big(\Avg_{B_{r_\psi}(\psi(x))}(\omega_0)-\Avg_{\psi(B_r(x))}(\omega_0)\big)\Big|^p\intd y\Big)^{\frac1p}\\
\leq&2\Big(\frac{1}{m\left(B_r(x)\right)}\int_{\psi(B_r(x))}\left|\omega_0(y)-\Avg_{B_{r_\psi}(\psi(x))}(\omega_0)\right|^p\intd y\Big)^{\frac1p}.
\end{align*}
If we take $r_\psi:=re^{V_{\rm Lip}(t)}$, it is easy to verify that
\begin{equation}\label{eq.r-rpsi}
\log r\sim\log r_\psi.
\end{equation}
This enables us to conclude that
\begin{align*}
&\left(-\log r\right)^{\alpha}\Big(\tfrac{1}{m(B_r(x))}\int_{B_r(x)}|\omega(y)-\Avg_{B_r(x)}(\omega)|^p\intd y\Big)^{\frac1p}\\
\leq&2\left(-\log r\right)^{\alpha}\Big(\tfrac{1}{m\left(B_r(x)\right)}\int_{B_{r_\psi}(\psi(x))}|\omega_0(y)-\Avg_{B_{r_\psi}(\psi(x))}(\omega_0)|^p\intd y\Big)^{\frac1p}\\
\leq&C(-\log r_\psi)^{\alpha}\Big(\tfrac{m(B_{r_\psi}(x))}{m(B_r(x))}\Big)^{\frac1p}\Big(\tfrac{1}{m(B_{r_\psi}(x))}\int_{B_{r_\psi}(\psi(x))}
|\omega_0(y)-\Avg_{B_{r_\psi}(\psi(x))}(\omega_0)|^p\intd y\Big)^{\frac1p}\\
\leq&C\big(e^{V_{\rm Lip}(t)}\big)^{\frac2p}\|\omega_0\|_{{\rm L^{\alpha}bmo}_{p}}.
\end{align*}
Moreover, by the H\"older inequality and Corollary \ref{Coro.JN}, we immediately obtain that
\begin{align*}
\|\omega(t)\|_{\rm L^{\alpha}bmo}\leq \|\omega(t)\|_{{\rm L^{\alpha}bmo}_p}\leq& C\big(e^{V_{\rm Lip}(t)}\big)^{\frac2p}\|\omega_0\|_{{\rm L^{\alpha}bmo}_{p}}\\
\leq& Cp\big(e^{V_{\rm Lip}(t)}\big)^{\frac2p}\|\omega_0\|_{{\rm L^{\alpha}bmo}}.
\end{align*}
Combining this with Lemma \ref{lem-take-log} leads to
\begin{align*}
\|\omega(t)\|_{\rm L^{\alpha}bmo}\leq& C\|\omega_0\|_{{\rm L^{\alpha}bmo}}\cdot\inf_{1\leq p<\infty}p\big(e^{V_{\rm Lip}(t)}\big)^{\frac2p}\\
\leq& C\big(1+V_{\rm Lip}(t)\big)\|\omega_0\|_{{\rm L^{\alpha}bmo}}.
\end{align*}

\noindent\textbf{Case 2:} $ e^{-4V_{\rm Lip}(t)}< r<1$.

First of all, we notice that the inequality $e^{-4V_{\rm Lip}(t)}\leq r\leq\frac12$ implies
$
-\log r\leq 4V_{\rm Lip}(t).
$
For $\alpha\in[0,1[$, by Proposition \ref{prop-Lbmo}, we have
\begin{align*}
&\left(-\log r\right)^\alpha\frac{1}{m\left(B_r(x)\right)}\int_{B_r(x)}\left|\omega(y)-\Avg_{B_r(x)}(\omega)\right|\intd y\\
\leq&\left(-\log r\right)\|\omega(t)\|_{{\rm L^{\alpha-1} bmo}(\RR^2)}\\
\leq&C\left(1+V_{\rm Lip}(t)\right)\|\omega_0\|_{{\rm L^\alpha bmo}(\RR^2)}.
\end{align*}
Similarly, one can infer that
\begin{align*}
&\left(-\log r\right)\frac{1}{m\left(B_r(x)\right)}\int_{B_r(x)}\left|\omega(y)-\Avg_{B_r(x)}(\omega)\right|\intd y\\
\leq&C\big(1+V_{\rm Lip}(t)\big)\log\big(1+V_{\rm Lip}(t)\big)\|\omega_0\|_{{\rm Lbmo}(\RR^2)}.
\end{align*}
For $\alpha=1$, we just need to modify the proof of the case $\alpha\in[0,1[$, slightly. In fact, we just need to use estimate \eqref{eq.flow-evolution-2} for $\alpha=1$   instead of $\alpha\in[0,1[$. So we omit it here.

When $\alpha>1$, using Proposition \ref{prop-lbmo-properties}, we know that ${\rm L^\alpha bmo}(\RR^2)$ continuously embeds into $B^0_{\infty,1}(\RR^2)$.
This together with the well-known fact $\|\omega(t)\|_{L^\infty(\RR^d)}\leq\|\omega_0\|_{L^\infty(\RR^2)}$ yields
\begin{align*}
&\left(-\log r\right)^\alpha\frac{1}{m\left(B_r(x)\right)}\int_{B_r(x)}\left|\omega(y)-\Avg_{B_r(x)}(\omega)\right|\intd y\\
\leq&2\left(-\log r\right)^\alpha\frac{1}{m\left(B_r(x)\right)}\int_{B_r(x)}\left|\omega(y)\right|\intd y\\
\leq&2\left(-\log r\right)^\alpha\|\omega(t)\|_{L^\infty(\RR^2)}\\
\leq&C\big(1+V_{\rm Lip}(t)\big)^\alpha\|\omega_0\|_{{\rm L^\alpha bmo}(\RR^2)}.
\end{align*}
Collecting all these estimates completes the proof.
\end{proof}
It follows from  Theorem \ref{prop.reg-pser}, and  from the inclusion relation ${\rm L^\alpha bmo}(\RR^2)\hookrightarrow B^0_{\infty,1}(\RR^2)$ that \eqref{eq.log-crease} is closed for $\alpha>1$.
More precisely:
\begin{corollary}\label{coro.reg-pser}
Let $u_0\in L^\infty(\RR^2)$ and $\omega_0\in {\rm L^{\alpha}bmo}(\RR^2)$ with $\alpha>1$. Assume that $u$ is a smooth solution of \eqref{eq.Euler}. Then there exist a positive smooth function $C(t)$, dependent of  initial data and $\alpha$ such that
\begin{equation*}
\|u(t)\|_{L^\infty(\RR^2)}+\|\omega(t)\|_{{\rm L^{\alpha}bmo}(\RR^2)}\leq C(t).
\end{equation*}
\end{corollary}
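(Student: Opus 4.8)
The plan is to reduce everything to Theorem~\ref{prop.reg-pser}: its $\alpha>1$ branch already gives
\[
\|u(t)\|_{L^\infty(\RR^2)}+\|\omega(t)\|_{{\rm L^{\alpha}bmo}(\RR^2)}\leq C\big(1+V_{\rm Lip}(t)\big)^{\alpha}\|\omega_0\|_{{\rm L^{\alpha}bmo}(\RR^2)},
\]
where $V_{\rm Lip}(t)=\int_0^t\|\nabla u(\tau)\|_{L^\infty(\RR^2)}\intd\tau$, so it suffices to bound $V_{\rm Lip}(t)$ by a smooth function of $t$ and the initial data. The structural input is that $\alpha>1$ forces, by Proposition~\ref{prop-lbmo-properties}\eqref{eq.item-0} and \eqref{eq.item-2}, the two embeddings ${\rm L^{\alpha}bmo}(\RR^2)\hookrightarrow{\rm bmo}(\RR^2)$ and ${\rm L^{\alpha}bmo}(\RR^2)\hookrightarrow B^0_{\infty,1}(\RR^2)$ (the latter by taking $q=1$, legitimate since $q\alpha>1$). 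In particular $\omega_0\in{\rm bmo}(\RR^2)$, so the John--Nirenberg inequality (the $\alpha=0$ case of Lemma~\ref{lemma-p-alpha}) gives $\omega_0\in Y^{\Theta}_{\rm ul}(\RR^2)$ with $\Theta(p)=p\in\mathcal{A}_1$, whence Proposition~\ref{prop.local-infty} already supplies $\|u(t)\|_{L^\infty(\RR^2)}\leq C(t)$.

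Next I would close an a priori estimate for $\|\omega(t)\|_{B^0_{\infty,1}(\RR^2)}$. Since $u=K\ast\omega$, the Fourier symbol of $\nabla u$ is homogeneous of degree $0$ and smooth away from the origin, so a Mikhlin-type argument on dyadic annuli yields $\|\dot\Delta_j\nabla u\|_{L^\infty}\leq C\|\dot\Delta_j\omega\|_{L^\infty}$ for every $j\geq0$, while the low-frequency part is controlled by $\|u\|_{L^\infty}$ via Bernstein; summing,
\[
\|\nabla u(t)\|_{L^\infty(\RR^2)}\leq C\|u(t)\|_{L^\infty(\RR^2)}+C\|\omega(t)\|_{B^0_{\infty,1}(\RR^2)}\leq C(t)+C\|\omega(t)\|_{B^0_{\infty,1}(\RR^2)}.
\]
Feeding this into Vishik's logarithmic estimate \eqref{eq.Vishik-log} for the transport equation \eqref{eq.vor} gives
\[
\|\omega(t)\|_{B^0_{\infty,1}(\RR^2)}\leq C(t)\Big(1+\int_0^t\|\omega(\tau)\|_{B^0_{\infty,1}(\RR^2)}\intd\tau\Big)\|\omega_0\|_{B^0_{\infty,1}(\RR^2)},
\]
and the Gronwall lemma turns this into $\|\omega(t)\|_{B^0_{\infty,1}(\RR^2)}\leq C(t)$. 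Integrating the displayed bound for $\|\nabla u\|_{L^\infty}$ in time then gives $V_{\rm Lip}(t)\leq C(t)$, and plugging this back into the $\alpha>1$ branch of \eqref{eq.log-crease} produces $\|u(t)\|_{L^\infty(\RR^2)}+\|\omega(t)\|_{{\rm L^{\alpha}bmo}(\RR^2)}\leq C(t)$, as desired.

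The only step that is not entirely routine is the self-improving $B^0_{\infty,1}$ bound: one must check that the logarithmic/Gronwall loop genuinely closes, i.e.\ that every constant produced along the way depends only on $t$ and on $\|\omega_0\|_{{\rm L^{\alpha}bmo}(\RR^2)}$ and $\|u_0\|_{L^\infty(\RR^2)}$ --- this is precisely where the prior bound $\|u(t)\|_{L^\infty}\leq C(t)$ from Proposition~\ref{prop.local-infty}, and hence the embedding ${\rm L^{\alpha}bmo}\hookrightarrow{\rm bmo}$ together with $\alpha>1$, is used. If one prefers to avoid reproving this loop, it may be replaced by a direct appeal to the known global well-posedness of \eqref{eq.Euler} for data with vorticity in $B^0_{\infty,1}(\RR^2)$ (Vishik \cite{Vi98}, Serfati \cite{Ser-1}), which already delivers $V_{\rm Lip}(t)\leq C(t)$; either way the conclusion follows by combining with Theorem~\ref{prop.reg-pser}.
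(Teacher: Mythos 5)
Your proposal is correct and follows essentially the same route as the paper: Vishik's estimate \eqref{eq.Vishik-log} combined with the embedding ${\rm L^\alpha bmo}(\RR^2)\hookrightarrow B^0_{\infty,1}(\RR^2)$ for $\alpha>1$ yields $V_{\rm Lip}(t)\leq C(t)$, which is then inserted into the $\alpha>1$ branch of \eqref{eq.log-crease}. The only difference is that you spell out the Gronwall loop (and the auxiliary bound $\|u(t)\|_{L^\infty(\RR^2)}\leq C(t)$ from Proposition \ref{prop.local-infty}) that the paper's terse ``combining'' step leaves implicit.
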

\begin{proof}
From \cite{Vi98}, we already know that
\begin{equation*}
\|\omega(t)\|_{B^0_{\infty,1}(\RR^2)}\leq C\big(1+V_{\rm Lip}(t)\big)\|\omega_0\|_{B^0_{\infty,1}(\RR^2)}.
\end{equation*}
Combining this estimate with the inclusion relation ${\rm L^\alpha bmo}(\RR^2)\hookrightarrow B^0_{\infty,1}(\RR^2)$ entails $V_{\rm Lip}(t)\leq C(t)$. Inserting this inequality into \eqref{eq.log-crease} with $\alpha>1$, we get the required result.
\end{proof}

\section{Proof of the main theorems} \label{Sec-Proof}

This section is devoted to the proof of Theorem \ref{Theorem-Exi}, Theorem \ref{Coro-Exi-bmo} and Theorem \ref{Theorem-Exi-UNI} which were formulated in Section~\ref{INTR}. We first restrict our attention to the existence statement. Here we just need to give the proof of Theorem \ref{Theorem-Exi-UNI} for the case $\alpha>1$ because the proof for other $\alpha\in[0,1]$ is very similar. Indeed, since ${\rm L^\alpha bmo}(\RR^2)\hookrightarrow B^0_{\infty,1}(\RR^2)$ for $\alpha>1$,
 it can be obtained by the result in \cite{Ser-1} and  estimate \eqref{eq.Vishik-log}.
 To do this, we shall adopt the following approximate scheme
\begin{equation}\label{eq.approx-Euler}
\left\{\begin{array}{ll}
\partial_tu^n+(u^n\cdot\nabla) u^n+\nabla\Pi^n=0,\quad (t,x)\in\RR^+\times\RR^2,\\
\Div u^n=0,\\
u^n|_{t=0}=S_{n+1}u_0.
\end{array}\right.
\end{equation}
Since $u_0\in L^\infty(\RR^2)$ and $\omega_0\in {\rm L^\alpha bmo}(\RR^2)$, we see that $u^n_0\in C_b^\infty(\RR^2)$  with $C_b^\infty(\RR^2):=\cap_{s>0}B^s_{\infty,\infty}(\RR^2)$. Performing a argument used in  \cite{Ser-1,Ser-2}, we know that there exists a unique global solution $u^n$ to problem \eqref{eq.approx-Euler} satisfying $u^n\in C(\RR^+;{B^s_{\infty,\infty}}(\RR^2))$ for any $s\geq0$. Corollary \ref{coro.reg-pser} enables us to conclude that the family $(u^n, \omega^n)$ is uniformly bounded in $L^\infty(\RR^2)\times {\rm L^{\alpha}bmo}(\RR^2)$ with $\alpha>1$. This regularity implies that $(u^n,\omega^n)$ has a limit $(u,\omega)$ such that
\begin{equation}\label{weak-con}
(u^n,\omega^n)\rightharpoonup (u,\omega)\quad\text{in}\quad L_{\rm loc}^\infty(\RR^2)\times {\rm L^{\alpha}bmo}(\RR^2)
\end{equation}
and that $(u,\omega)\in L^\infty_{\rm loc}(\mathbb{R}^+;L^\infty(\RR^2))\times L^\infty_{\rm loc}(\mathbb{R}^+;{\rm L^\alpha bmo}(\RR^2))$. On the other hand, we see that
\[\partial_tu^n=-(u^n\cdot\nabla) u^n-\nabla\Pi^n.\]
By the Bony paraproduct decomposition, one has  for $\epsilon>0$
\begin{equation*}
\begin{split}
 &\|(u^n\cdot\nabla) u^n\|_{L^\infty_{\rm loc}(\RR^+;B^{-\epsilon}_{\infty,\infty}(\RR^2))}\\
\leq&C\|u_n\|_{L^\infty_{\rm loc}(\RR^+;L^\infty(\RR^2))} \| \nabla  u^n\|_{L^\infty_{\rm loc}(\RR^+;B^{-\epsilon}_{\infty,\infty}(\RR^2))} \\
\leq&C\|u_n\|_{L^\infty_{\rm loc}(\RR^+;L^\infty(\RR^2))} \big(\|u_n\|_{L^\infty_{\rm loc}(\RR^+;L^\infty(\RR^2))} +\| \omega^n\|_{L^\infty_{\rm loc}(\RR^+;{\rm L^{\alpha}bmo}(\RR^2))}\big)<\infty.
\end{split}
\end{equation*}
For any $0<\epsilon<1$, we easily see that $\|\nabla\Pi^n\|_{L^\infty_{\rm loc}(\RR^+;B^{-\epsilon}_{\infty,\infty}(\RR^2))}<\infty$.  This together with the above
estimate shows that $\partial_tu^n\in L^\infty_{\rm loc}(\RR^+;B^{-\epsilon}_{\infty,\infty}(\RR^2))$ for $0<\epsilon<1$. Thus, by using the classical Aubin-Lions argument and performing the standard Cantor's diagonal process, we can deduce that, up to subsequence,
\begin{equation*}
u^n\rightarrow u \quad\text{in}\quad L^\infty_{\rm loc}(\RR^+;B^{-\epsilon}_{\infty,\infty}(\RR^2)).
\end{equation*}
Note that $ u\in L^{\infty}(\mathbb R^2)$  and $\omega\in L^{\alpha}{\rm bmo}(\mathbb R^2)$ imply that $u\in B^1_{\infty,\infty}(\mathbb R^2)$ by Proposition \ref{prop-lbmo-properties}. Thus, we immediately have by  the interpolation theorem
\begin{equation}\label{strong-con}
u^n\rightarrow u \quad\text{in}\quad L^\infty_{\rm loc}(\RR^+;B^{1-\epsilon}_{\infty,\infty}(\RR^2)).
\end{equation}
With the help of \eqref{weak-con} and \eqref{strong-con}, we easily find that the nonlinear term $u^n\cdot\nabla u^{n} $ tends to $u\cdot\nabla u$ in the sense of distribution. This means that the limit $u$ is a weak solution of \eqref{eq.Euler}.

Next,  we turn to show the time continuity  $u\in C(\RR^+;B^{1-\epsilon}_{\infty,1}(\RR^2))$ for any $\epsilon>0$. Since $\partial_tu\in L^\infty_{\rm loc}(\RR^+;B^{-\epsilon}_{\infty,\infty}(\RR^2))$, we have from the mean value formula that for any $t_1,\,t_2\in [0,\infty[$
\begin{equation*}
\|u(t_1)-u(t_2)\|_{B^{-\epsilon}_{\infty,\infty}(\RR^2)}\leq\int_{t_2}^{t_1}\|\partial_tu(\tau)\|_{B^{-\epsilon}_{\infty,\infty}(\RR^2)}\intd\tau\leq C\big|t_1-t_2\big|,
\end{equation*}
which implies $u\in C(\RR^+;B^{-\epsilon}_{\infty,\infty}(\RR^2))$. This together with the fact that $\omega\in L^\infty_{\rm loc}(\RR^+;{\rm L^\alpha bmo}(\RR^2))$ yields that $u\in C(\RR^+;B^{1-\epsilon}_{\infty,1}(\RR^2))$.  Mimicking  the above proof, we can show the existence of solution to Theorem \ref{Theorem-Exi} and  Corollary \ref{Coro-Exi-bmo}.

Next, we focus on the uniqueness statement.  Let $(u,\Pi)$ and $(\tilde u,\tilde\Pi)$ be two solutions of \eqref{eq.Euler} with the same initial data, then the differences $\big(\delta u,\delta\Pi\big):=\big(u-\tilde u,\Pi-\tilde\Pi\big)$ satisfies
\begin{equation}\label{eq.diff-Euler}
\left\{\begin{array}{ll}
       \partial_{t}\delta u+(u\cdot\nabla) \delta u+\nabla \delta\Pi=-\delta (u\cdot\nabla)\tilde u,\quad (t,x)\in\RR^+\times\RR^2,\\
       \Div \delta u=0,\\
      \delta u|_{t=0}=0.
\end{array}\right.
\end{equation}
Here and in what follows, we define
\begin{equation*}
\|\cdot\|_{\overline{Y^{\Theta}_{\rm Lip}}(\RR^d)}:=\|\cdot\|_{L^\infty(\RR^d)}+\|\cdot\|_{Y^\Theta_{\rm Lip}(\RR^d)}.
\end{equation*}
In order to prove the uniqueness of solution, it suffices to show the following proposition.
\begin{proposition}\label{Prop-uni}
Let $u$ and $\tilde u$ belong to $ L^\infty_T\big(\overline{Y_{\rm Lip}^{\Theta}}(\RR^2)\big)$ with $\Theta(x)$ satisfies
$$\int_1^\infty\frac{1}{\Theta(x)}\intd x=\infty.$$
 Assume that $(u,\Pi)$ and $(\tilde u,\tilde\Pi)$ are two solutions of \eqref{eq.Euler} with the same initial data. Then $u\equiv\tilde u$ on interval $[0,T]$.
\end{proposition}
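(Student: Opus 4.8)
The plan is to reduce the statement to a scalar differential inequality for $g(t):=\|u(t)-\tilde u(t)\|_{L^{\infty}(\RR^{2})}$ and to close it by an Osgood argument, the hypothesis $\int_{1}^{\infty}\Theta(x)^{-1}\intd x=\infty$ being exactly what is needed. The first point is that membership in $\overline{Y^{\Theta}_{\rm Lip}}$ yields a common modulus of continuity: writing $f=S_{N+1}f+(\mathrm{Id}-S_{N+1})f$, using $\|\nabla S_{N+1}f\|_{L^{\infty}}\le\Theta(N)\|f\|_{Y^{\Theta}_{\rm Lip}}$ and the Bernstein bound $\|\Delta_{q}f\|_{L^{\infty}}\lesssim 2^{-q}\|\nabla S_{q+1}f\|_{L^{\infty}}\lesssim 2^{-q}\Theta(q)\|f\|_{Y^{\Theta}_{\rm Lip}}$ together with the subexponential growth of $\Theta$ (forced by the admissibility hypothesis), and finally choosing the cutoff $N\sim\log(|x-y|^{-1})$, one gets for $0<|x-y|<1$
\begin{equation*}
|f(x)-f(y)|\le C\,\|f\|_{\overline{Y^{\Theta}_{\rm Lip}}}\,|x-y|\,\Theta\big(\log(e+|x-y|^{-1})\big)=:C\,\|f\|_{\overline{Y^{\Theta}_{\rm Lip}}}\,\mu(|x-y|),
\end{equation*}
a standard Littlewood--Paley characterization (cf.\ \cite[Proposition~2.111]{BCD11}). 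The change of variables $x=\log(e+r^{-1})$ turns $\int_{1}^{\infty}\Theta(x)^{-1}\intd x=\infty$ into $\int_{0}^{1}\mu(r)^{-1}\intd r=\infty$, so $\mu$ is an Osgood modulus; both $u(t,\cdot)$ and $\tilde u(t,\cdot)$ carry it uniformly on $[0,T]$, and in particular each generates a unique measure-preserving flow (by the argument of Proposition \ref{Prop-flowmap-log}).

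Next I would pass to the difference equation. Applying the Leray projector $\mathcal{P}$ to \eqref{eq.diff-Euler} and using $\Div u=\Div\tilde u=0$ yields
\begin{equation*}
\partial_{t}\delta u+\mathcal{P}\,\Div\big(u\otimes\delta u+\delta u\otimes\tilde u\big)=0,\qquad \delta u|_{t=0}=0,
\end{equation*}
with $\delta u$ uniformly bounded on $[0,T]$, $\|\delta u(t)\|_{L^{\infty}}\le M$, by the a priori estimates. I would estimate $g(t)=\|\delta u(t)\|_{L^{\infty}}$ by the frequency-localized scheme already used for the $L^{\infty}$-bound in Proposition \ref{prop.local-infty}: decompose $\delta u=\dot S_{-N}\delta u+\sum_{q\ge -N}\dot\Delta_{q}\delta u$. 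For the low frequencies, integrate the equation in time (the datum vanishing) and use that $\dot S_{-N}\mathcal{P}\,\Div$ gains a factor $2^{-N}$ on $L^{\infty}$, refined by a Bony decomposition of $u\otimes\delta u+\delta u\otimes\tilde u$ exactly as in Lemma \ref{Lem-pro-est}, so that this contribution is bounded by $2^{-\beta N}$ (some $\beta>0$) times the time integral of $g$ weighted by fixed norms of the vorticities. For the remaining frequencies, use the Biot--Savart relation $\|\dot\Delta_{q}\delta u\|_{L^{\infty}}\lesssim 2^{-q}\|\dot\Delta_{q}\delta\omega\|_{L^{\infty}}$, interpolation bringing in the $Y^{\Theta}_{\rm Lip}$-regularity of $\tilde u$, and the uniform-local $L^{p}$ bounds for vorticity (Proposition \ref{prop-loc-p}). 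Balancing the two contributions by the choice $N\sim\log(1/g(t))$ is where the algebraic gains collapse into the logarithmic modulus $\Theta(\log(\cdot))$, and the target is an inequality
\begin{equation*}
g(t)\le C\int_{0}^{t}\gamma(\tau)\,\mu\big(g(\tau)\big)\intd\tau,\qquad t\in[0,T],
\end{equation*}
with $\gamma\in L^{1}(0,T)$ depending only on the a priori bounds for $u$ and $\tilde u$ and $\mu$ the Osgood modulus above.

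Since $g(0)=0$ and $\int_{0}^{1}\mu(r)^{-1}\intd r=\infty$, the Osgood lemma then forces $g\equiv 0$ on $[0,T]$, hence $u\equiv\tilde u$, and $\Pi\equiv\tilde\Pi$ up to a function of time via \eqref{eq-pressure}. I expect the hard part to be the stability estimate itself: because $\omega,\tilde\omega$ --- hence $\delta\omega$ --- are unbounded and non-decaying, a global $L^{2}$ energy estimate is unavailable and the Biot--Savart integral for $\delta u$ does not converge absolutely, so one must run the Serfati-type frequency-localized argument directly on the projected difference equation, pressure term included, coupling $\delta u$ with $\delta\omega$, and arrange the estimate so that the optimization of the cutoff $N$ reproduces exactly the modulus $\mu(r)\sim r\,\Theta(\log(e+1/r))$; any loss worse than this logarithmic one would destroy the condition $\int_{1}^{\infty}\Theta(x)^{-1}\intd x=\infty$ and with it uniqueness.
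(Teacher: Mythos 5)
There is a genuine gap, and it sits exactly where you yourself locate the ``hard part'': the stability estimate $g(t)\le C\int_0^t\gamma(\tau)\,\mu(g(\tau))\,\intd\tau$ for $g(t)=\|\delta u(t)\|_{L^\infty}$ is asserted as a target but the mechanism you sketch cannot produce it. First, your treatment of the frequencies $q\ge -N$ rests on the vorticity difference $\delta\omega$, which is not controllable: the equation for $\delta\omega$ contains $\delta u\cdot\nabla\tilde\omega$ with $\nabla\tilde\omega$ completely uncontrolled, so the only available bound on $\|\dot\Delta_q\delta\omega\|_{L^\infty}$ is the sum of the two fixed vorticity norms, which is $O(1)$ and not small in $g$. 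Interpolating as in Proposition \ref{prop.local-infty} then gives at best $\sum_{-N\le q\le -1}\|\dot\Delta_q\delta u\|_{L^\infty}\lesssim 2^{N\alpha}g^{1-\alpha}$, and after optimizing $N$ against the low-frequency contribution $2^{-\beta N}\int\gamma g$ one lands on an inequality of the type $g^{2}\lesssim\int_0^t\gamma\,g\,\intd\tau$, which does not force $g\equiv0$; no choice $N\sim\log(1/g)$ turns these algebraic powers into the Osgood modulus $r\,\Theta(\log(e+1/r))$. Second, estimating the full $L^\infty$ norm means summing over the $O(N)$ intermediate blocks, and any bound of each block by $g$ itself (or by $2^{-q}\Theta(q)$ via the $Y^\Theta_{\rm Lip}$ regularity of $u,\tilde u$) produces a factor $N\,g$ on the right-hand side that cannot be absorbed. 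Third, the pressure at intermediate frequencies is not addressed: $\mathcal{P}\Div$ is not bounded on $L^\infty$, and the gain of $2^{-N}$ from $\dot S_{-N}$ only covers frequencies below $2^{-N}$.

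This is precisely why the paper does not measure the difference in $L^\infty$: it measures it in $B^{-\varepsilon}_{\infty,\infty}$, never estimates $\delta\omega$, and proves three dedicated Bony-decomposition lemmas (Lemma \ref{convection-est} for the convection term, Lemma \ref{commutator-est} for the commutator with the transport part, Lemma \ref{pressure-estimate} for $B(u,v)=\nabla\Div|D|^{-2}((u\cdot\nabla)v)$, the last one with a careful split of the kernel $E_d$) whose common feature is a loss of exactly $\Theta(q+2)2^{q\varepsilon}$ per block against $\|\delta u\|_{B^{-\varepsilon}_{\infty,\infty}}$. Because one takes a supremum over blocks weighted by $2^{-q\varepsilon}$ rather than a sum, the high frequencies are damped by $2^{-N\varepsilon}$, the low frequencies cost only a factor $\Theta(N)$, and the choice $2^{-N\varepsilon}\sim\int_0^t\|\delta u\|_{B^{-\varepsilon}_{\infty,\infty}}\intd\tau$ yields the closed inequality with $\Theta\bigl(\tfrac1\varepsilon\log(1/\int)\bigr)$, to which Osgood applies through \eqref{eq.claim}. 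Your opening observation (that $\overline{Y^\Theta_{\rm Lip}}$ gives an Osgood modulus of continuity $\mu(r)\sim r\,\Theta(\log(e+1/r))$, hence unique flows) is correct but is not, by itself, a substitute for this quasilinear stability estimate; if you want to exploit the flows you would need a genuinely Lagrangian argument, which your sketch does not develop. As written, the proposal does not prove the proposition.
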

\begin{proof}
The incompressibility condition implies that $\nabla\Pi=B(u,u)$ with $B(u,v):=\nabla\Div|D|^{-2}((u\cdot\nabla) v)$. Thus, we get from \eqref{eq.diff-Euler} that
\begin{equation}\label{eq-diff}
\partial_t\delta u+(u\cdot\nabla)\delta u=B(\delta u,\widetilde u)+B(u,\delta u)-\delta (u\cdot\nabla)\widetilde u.
\end{equation}
Applying the operator $\Delta_q$ to the above equality with $q\geq-1$ yields that
\begin{equation*}
\partial_t\Delta_q\delta u+\big((S_{q+1}u\big)\cdot\nabla)\delta u=\Delta_qB(\delta u,\widetilde u)+\Delta_qB(u,\delta u)-\Delta_q\bigl(\delta (u\cdot\nabla)\widetilde u\bigr)+\digamma_q(u,\delta u),
\end{equation*}
where $\digamma_q=((S_{q+1}u)\cdot\nabla)\delta u-\Delta_{q}((u\cdot\nabla)\delta u)$.

It follows that
\begin{equation*}
\|\Delta_q\delta u\|_{L^\infty}\leq\int_0^t\big\|\Delta_qB(\delta u,\widetilde u)+\Delta_qB(u,\delta u)-\Delta_q\bigl(\delta (u\cdot\nabla)\widetilde u\bigr)+\digamma_q(u,\delta u)\bigr)\big\|_{L^\infty}\intd\tau.
\end{equation*}
By using Lemma \ref{convection-est}, Lemma \ref{commutator-est} and Lemma \ref{pressure-estimate}, we readily get that for all $\varepsilon\in]0,1[$,
\begin{equation}\label{eq.low-esti}
\|\Delta_q\delta u(t)\|_{L^\infty}\leq C\Theta(q+2)2^{q\varepsilon}\int_0^t\Big(\|u(\tau)\|_{ \overline{Y^\Theta_{\rm Lip}}}+\|\widetilde u(\tau)\|_{ \overline{Y^\Theta_{\rm Lip}}}\Big)\|\delta u(\tau)\|_{B^{-\varepsilon}_{\infty,\infty}}\intd\tau.
\end{equation}
By resorting to the low-high frequency decomposition technique, we know
\begin{equation}\label{eq.diff-decom}
\|\delta u\|_{B^{-\varepsilon}_{\infty,\infty}}\leq \sup_{q\leq N}2^{-q\varepsilon}\|\Delta_q\delta u(t)\|_{L^\infty}+\sup_{q>N}2^{-q\varepsilon}\|\Delta_q\delta u(t)\|_{L^\infty},
\end{equation}
where $N$ is a positive integer to be specified later.

For the high frequency part by the Bernstein inequality, we obtain
\begin{align}\label{eq.diff-high}
\sup_{q>N}2^{-q\varepsilon}\|\Delta_q\delta u(t)\|_{L^\infty}\leq&C\sup_{q>N}2^{-q(1+\varepsilon)}\left(\|\Delta_q\nabla u(t)\|_{L^\infty}+\|\Delta_q\nabla \widetilde u(t)\|_{L^\infty}\right)\nonumber\\
\leq&C\sup_{q>N}\Theta(q)2^{-q(1+\varepsilon)}\left(\|u(t)\|_{Y^\Theta_{\rm Lip}}+\|\widetilde u(t)\|_{Y^\Theta_{\rm Lip}}\right)\nonumber\\
\leq&C\sup_{q>N}2^{-q\varepsilon}\left(\|u\|_{L^\infty_tY^\Theta_{\rm Lip}}+\|\widetilde u\|_{L^\infty_tY^\Theta_{\rm Lip}}\right)
\leq C2^{-N\varepsilon}.
\end{align}
Next, we deal with the low frequency part. We observe that \eqref{eq.low-esti}, the properties of $\Theta(\cdot)$,  and the H\"older inequality allow us to get
\begin{equation}\label{eq.diff-low}
\begin{split}
&\sup_{q\leq N}2^{-q\varepsilon}\|\Delta_q\delta u(t)\|_{L^\infty}\\
\leq& C\Theta(N)\int_0^t\Big(\|u(\tau)\|_{ \overline{Y^\Theta_{\rm Lip}}}+\|\widetilde u(\tau)\|_{ \overline{Y^\Theta_{\rm Lip}}}\Big)\|\delta u(\tau)\|_{B^{-\varepsilon}_{\infty,\infty}}\intd\tau\\
\leq&C\Big(\|u\|_{L^\infty_t \overline{Y^\Theta_{\rm Lip}}(\RR^2)}+\|\widetilde u\|_{ L^\infty_t\overline{Y^\Theta_{\rm Lip}}(\RR^2)}\Big)\Theta(N)\int_0^t\|\delta u(\tau)\|_{B^{-\varepsilon}_{\infty,\infty}(\RR^2)}\intd\tau.
\end{split}
\end{equation}
Plugging \eqref{eq.diff-high} and \eqref{eq.diff-low} in \eqref{eq.diff-decom}, we readily obtain
\begin{equation}\label{eq.diff-com}
\|\delta u(t)\|_{B^{-\varepsilon}_{\infty,\infty}}\leq C\Theta(N)\int_0^t\|\delta u(\tau)\|_{B^{-\varepsilon}_{\infty,\infty}}\intd\tau+C2^{-N\varepsilon}.
\end{equation}
According to the continuity of $\|(u,\widetilde u)(t)\|_{B^{-\varepsilon}_{\infty,\infty}}$, we conclude that there exists $T_0\in]0,\min\{T,1\}]$ such that $\sup_{t\in[0,T_0]}\|\delta u(t)\|_{B^{-\varepsilon}_{\infty,\infty}}\leq\frac12$.
Moreover, we may take $N$ satisfying
\begin{equation*}
2^{-N\varepsilon}\sim\int_0^t\|\delta u(\tau)\|_{B^{-\varepsilon}_{\infty,\infty}}\intd\tau,
\end{equation*}
that is,
\begin{equation*}
N\sim N_0=-\frac{1}{\varepsilon}\log\Big(\int_0^t\|\delta u(\tau)\|_{B^{-\varepsilon}_{\infty,\infty}}\intd\tau\Big).
\end{equation*}
Thus, \eqref{eq.diff-com} becomes  for $t\in[0,T_0]$
\begin{equation*}
\begin{split}
\|\delta u(t)\|_{B^{-\varepsilon}_{\infty,\infty}}\leq &C\Theta\left(-\frac{1}{\varepsilon}\log\Big(\int_0^t\|\delta u(\tau)\|_{B^{-\varepsilon}_{\infty,\infty}}\intd\tau\Big)\right)\int_0^t\|\delta u(\tau)\|_{B^{-\varepsilon}_{\infty,\infty}}\intd\tau\\
&+C\int_0^t\|\delta u(\tau)\|_{B^{-\varepsilon}_{\infty,\infty}}\intd\tau.
\end{split}
\end{equation*}
We observe that $\Theta(\cdot)$ fulfills
\begin{equation}\label{eq.claim}
\int_{0}^1\frac{1}{s\Theta(\log1/s)}\intd s=\int_{\infty}^1\frac{t}{ \Theta(\log t)}\intd \frac1t=\int_1^\infty\frac{1}{\Theta(s)}\intd s=\infty.
\end{equation}
By using Osgood's Theorem, we obtain that $\delta u(t)\equiv0$ on the interval $[0,\min\{T_0,1\}]$. Since $u$ and $\widetilde u$ are in $L^\infty\big([0,T];\,\overline{Y_{\rm Lip}^{\Theta}}(\RR^2)\big)$,  we eventually conclude that $u\equiv\widetilde u$ on the whole interval $[0,T]$ via a standard connectivity argument.
\end{proof}
Based on the above, we turn to prove the uniqueness of solutions.
\begin{itemize}
  \item Uniqueness in Theorem \ref{Theorem-Exi}.

  We see that
\begin{equation}
\sup_{2\leq j<\infty}\frac{\|S_{j+1}\nabla u(t)\|_{L^\infty(\RR^2)}}{j\Theta(j)}\leq C\|u(t)\|_{L^\infty(\RR^2)}+C\sup_{2\leq j<\infty}\frac{\|\omega(t)\|_{j,\,1}}{\Theta(j)}.
\end{equation}
Indeed, by using \eqref{eq.local-bern}, we can infer that
\begin{align*}
\|S_{j+1}\nabla u\|_{L^\infty(\RR^2)}\leq& \|\Delta_0u\|_{L^\infty(\RR^2)}+\sum_{1\leq k\leq j}\|\Delta_k\nabla u(t)\|_{L^\infty(\RR^2)}\\
\leq&C\|u(t)\|_{L^\infty(\RR^2)}+C\sum_{1\leq k\leq j}\|\Delta_k\omega(t)\|_{L^\infty(\RR^2)}\\
\leq&C\|u(t)\|_{L^\infty(\RR^2)}+C\sum_{1\leq k\leq j}2^{\frac{2k}{j}}\|\omega(t)\|_{j,\,1}\\
\leq&C\|u(t)\|_{L^\infty(\RR^2)}+Cj\|\omega(t)\|_{j,\,1}.
\end{align*}
Since $\omega\in Y^\Theta_{\rm ul}(\RR^2)$ with $\Theta\in\mathcal{A}_2$, then we have $u\in Y^\Theta_{\rm Lip}(\RR^2)$ with $\Theta$ satisfies $\int_1^\infty\frac{1}{\Theta(x)}\intd x=\infty$.
Moreover, applying Proposition \ref{Prop-uni}, we can conclude that the uniqueness of solution.

  \item Uniqueness in Theorem \ref{Coro-Exi-bmo}.

We  apply \eqref{recall} to Proposition \ref{Prop-uni} to get the uniqueness of solution.

  \item Uniqueness in Theorem \ref{Theorem-Exi-UNI}.

 By Proposition \ref{prop-lbmo-properties}, we know that
\begin{equation*}
\sup_{j\geq2}\frac{\|S_j\nabla u\|_{L^\infty(\RR^2)}}{j\log(1+j)}\leq C\|u\|_{L^\infty(\RR^2)}+\|\omega\|_{{\rm L_{\log}bmo}(\RR^2)}.
\end{equation*}
It is obvious that $\Theta(p)=p\log(1+p)$ satisfies $\int_1^\infty\frac{1}{\Theta(x)}\intd x=\infty$. Whence, it follow the uniqueness of solution from  Proposition \ref{prop-lbmo-properties}.
\end{itemize}
Now, the proof of our results is achieved completely.

\begin{center}{\bf Appendix }\end{center}

\appendix

 \setcounter{section}{5}\setcounter{theorem}{0}\setcounter{equation}{0}

In this section, we first show the generalized John-Nirenberg inequality and its
corollary. Next, we further generalize the estimates for convection
term which play an important role in proving the uniqueness, in the
spirit of \cite{BCD11,Vi00}.

\begin{theorem}[Generalized John-Nirenberg inequality]\label{Thm-John}
Let $f$ belong to ${\rm L^\alpha BMO}(\mathbb{R}^d)$ and $\alpha\in[0,\infty[$. There exist constants $B$ and $b$ dependent of $d$ such that for all cube $Q\subset\mathbb{R}^d$ with $r_Q\in]0,1[$ and $\beta>0$
\begin{equation}\label{eq.John-Nirenberg}
\mu_Q(\beta)\leq B\exp{\Big(-\frac{b\beta(-\log r_{Q})^{\alpha}}{\|f\|_{\rm L^\alpha BMO}}\Big)}\cdot m\left(Q\right),
\end{equation}
where $\mu_Q(\beta)$ be defined by
\begin{equation}
\mu_Q(\beta):=m\left(\big\{x\in Q:\,|f(x)-\Avg_Q(f)|>\beta\big\}\right).
\end{equation}
\end{theorem}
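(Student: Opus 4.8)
The plan is to adapt the classical Calder\'on--Zygmund stopping-time proof of the John--Nirenberg inequality, the only new ingredient being a scale-monotonicity that is special to the range $\alpha\geq0$. Indeed, the weight $(-\log r)^{-\alpha}$ appearing in the definition of ${\rm L^\alpha BMO}$ is \emph{non-increasing} in $r$ on $]0,1[$ as soon as $\alpha\geq0$. Consequently, once a cube $Q$ with $r_Q\in]0,1[$ is fixed, every subcube $Q'\subseteq Q$ has $r_{Q'}\leq r_Q<1$, and comparing $Q'$ with a ball of comparable diameter (using whichever of the two parts of the ${\rm L^\alpha BMO}$-norm is relevant to that ball's radius) gives
\begin{equation*}
\frac{1}{m(Q')}\int_{Q'}\big|f(y)-\Avg_{Q'}(f)\big|\intd y\ \leq\ C_d\,(-\log r_Q)^{-\alpha}\,\|f\|_{{\rm L^\alpha BMO}}\ =:\ A,
\end{equation*}
with $C_d$ depending only on $d$. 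In other words, the restriction of $f$ to the subcubes of $Q$ obeys the \emph{ordinary} $\rm BMO$ condition with the improved constant $A$. This observation is the crux of the matter.

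Granting it, I would run the standard stopping-time scheme with constant $A$. Fix a height $\lambda$ equal to a fixed multiple of $A$ (say $\lambda=2A$) and apply the dyadic Calder\'on--Zygmund decomposition of $|f-\Avg_Q(f)|$ on $Q$ at level $\lambda$. This produces a pairwise disjoint family $\{Q_j\}$ of dyadic subcubes of $Q$ such that $|f-\Avg_Q(f)|\leq\lambda$ almost everywhere on $Q\setminus\bigcup_jQ_j$, that $\big|\Avg_{Q_j}(f)-\Avg_Q(f)\big|\leq 2^d\lambda$ for each $j$, and that
\begin{equation*}
\sum_jm(Q_j)\ \leq\ \frac1\lambda\int_Q\big|f-\Avg_Q(f)\big|\intd y\ \leq\ \frac{A}{\lambda}\,m(Q)\ =\ \tfrac12\,m(Q).
\end{equation*}
Because each $Q_j$ is again a subcube of $Q$, the bound $\Avg_{Q'}|f-\Avg_{Q'}(f)|\leq A$ is inherited by all subcubes of $Q_j$, so the decomposition may be repeated inside each $Q_j$ with the \emph{same} $\lambda$. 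Iterating $k$ times yields a family of total measure at most $2^{-k}m(Q)$, while telescoping the successive shifts of the averages shows that $|f-\Avg_Q(f)|$ can exceed $(1+k2^d)\lambda$ only on that family. Hence
\begin{equation*}
\mu_Q\big((1+k2^d)\lambda\big)\ \leq\ 2^{-k}\,m(Q)\qquad\text{for every integer }k\geq0.
\end{equation*}

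Finally, given $\beta>0$, I would take $k$ to be the largest integer with $(1+k2^d)\lambda\leq\beta$, so that $k$ is of order $\beta/(2^d\lambda)$, and obtain
\begin{equation*}
\mu_Q(\beta)\ \leq\ 2^{-k}\,m(Q)\ \leq\ B\exp\!\Big(-\frac{b\,\beta}{\lambda}\Big)\,m(Q);
\end{equation*}
since $\lambda$ is of size $(-\log r_Q)^{-\alpha}\|f\|_{{\rm L^\alpha BMO}}$, this is precisely \eqref{eq.John-Nirenberg} after absorbing $C_d$ into $B$ and $b$. I expect the genuine obstacle to lie entirely in the first step: it is exactly the hypothesis $\alpha\geq0$ that makes the coarsest scale $r_Q$ dominate all finer scales, and for $\alpha<0$ the scheme would break down, since the finer scales would then carry the larger oscillation and only a weaker statement would result. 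The remaining points are routine bookkeeping --- checking that every cube produced by the iteration stays nested in $Q$ (so one never leaves the range $r<1$ where the weight is available), handling the mild boundary case $r_Q\approx1$ in the ball/cube comparison, and keeping all constants independent of $r_Q$, of $\beta$, and of the number of iterations.
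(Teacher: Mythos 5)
Your proposal is correct and follows essentially the same route as the paper: the monotonicity of the weight $(-\log r)^{-\alpha}$ for $\alpha\geq0$ reduces everything to an ordinary BMO-type oscillation bound of size $\sim(-\log r_Q)^{-\alpha}\|f\|_{\rm L^\alpha BMO}$ on all subcubes of $Q$, after which the iterated Calder\'on--Zygmund stopping-time argument yields the exponential decay, exactly as in the paper's appendix (which merely does the bookkeeping differently, incrementing the height by $\sim 2^{d+1}(-\log r)^{-\alpha}$ per generation instead of fixing $\lambda=2A$). The only caveat, shared with (indeed glossed over by) the paper, is that the ball/cube comparison and the boundary regime $r_Q$ close to $1$ make the constants depend on $\alpha$ as well as $d$.
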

Let us remark that when $\alpha=0$, Theorem \ref{Thm-John} comes back to the classical John-Nirenberg inequality, see for instance \cite{Miao-book04}.
\begin{proof}
When $\beta(-\log r_{Q})^{\alpha}\leq\|f\|_{\rm L^\alpha BMO}$, one can conclude \eqref{eq.John-Nirenberg} by taking $B=e$ and $b=1$.

When $\beta(-\log r_{Q})^{\alpha}>\|f\|_{\rm L^\alpha BMO}$, for a fixed cube $Q_0$, we assume  that $r_{Q_0}<1$ and $\Avg_{Q_0}(f)=0$.
Otherwise, $f(x)$ may be replaced by $g(x)=f(x)-\Avg_{Q_0}(f)$ which fulfills $\Avg_{Q_0}(g)=0$ and $\|g\|_{\rm L^\alpha BMO}=\|f\|_{\rm L^{\alpha}BMO}$.
Also, we may assume that $\|f\|_{\rm L^\alpha BMO}=1$ without loss of generality. Now, Let us define
\begin{equation*}
\mu_{Q}(\beta)=m\left(\big\{x\in Q_0:\quad |f(x)|>\beta\big\}\right):=m\big(E_\beta\big).
\end{equation*}
Thus, for any $\lambda>\|f\|_{\rm L^\alpha BMO}(-\log r_{Q_{0}})^{-\alpha}=(-\log r_{Q_{0}})^{-\alpha}$, it is obvious that
\begin{equation*}
\frac{1}{m(Q_0)}\int_{Q_0}\big|f(x)\big|\intd x<\lambda.
\end{equation*}
Moreover, by the Calder\'on-Zygmund decomposition theorem, we get
\begin{equation*}
Q_0=F^\lambda\bigcup\left(\cup_{k=1}^\infty Q_k^\lambda\right),
\end{equation*}
where, the cubes $Q_k^\lambda$  are mutually disjoint and satisfies
\begin{equation*}
|f(x)|\leq \lambda,\quad \text{for}\quad \text{a.e}\quad x\in F^\lambda,
\end{equation*}
and
\begin{equation*}
\lambda<\frac{1}{m\left(Q_k^\lambda\right)}\int_{Q_k^\lambda}\big|f(x)\big|\intd x\leq 2^d\lambda.
\end{equation*}
According to construction of $Q_k^\lambda$, there exists a mother cube $\bar Q^\lambda_k$ such that $Q_k^\lambda$ is one of $2^d$   children cubes   $\bar Q^\lambda_k$ satisfying
\begin{equation*}
\frac{1}{m\left(\bar Q^\lambda_k\right)}\int_{\bar Q^\lambda_k}\big|f(x)\big|\intd x\leq\lambda.
\end{equation*}
Thus, it follows that
\begin{equation*}
\begin{split}
\frac{1}{m\left(Q^\lambda_k\right)}\int_{Q^\lambda_k}\big|f(x)\big|\intd x\leq&\frac{1}{m\left(Q^\lambda_k\right)}\int_{\bar Q^\lambda_k}\big|f(x)-\Avg_{\bar Q_k^\lambda}(f)\big|\intd x+\big|\Avg_{\bar Q_k^\lambda}(f)\big|\\
\leq&2^{d}(-\log r_{2Q_k})^{-\alpha}\|f\|_{\rm L^\alpha BMO}+\lambda\\
\leq&2^d(-\log r_{Q_k})^{-\alpha}+\lambda.
\end{split}
\end{equation*}
Suppose that $\zeta\geq\lambda$, in the same way as above, it is easy to construct Calder\'on-Zygmund decomposition of $Q_0$ as follows:
\begin{equation*}
Q_0=F^\zeta\bigcup\left(\cup_{j=1}^\infty Q_j^\zeta\right).
\end{equation*}
Clearly,
\begin{equation*}
\bigcup_{j=1}^\infty Q_j^\zeta\subset\bigcup_{k=1}^\infty Q_k^\lambda.
\end{equation*}
Whence, for each cube $Q_j^\zeta$, there exists a cube $Q_k^\lambda$ such that $Q_j^\zeta\subset Q_k^\lambda$. Now let us take $\zeta=\lambda+2^{d+1}(-\log r_{Q_k})^{-\alpha}$ and denote
\begin{equation*}
Q^{\zeta,\,\lambda}_{j,\,k}:=\bigcup_{j,\,\,Q_j^\zeta\subset Q_k^\lambda}Q_j^\zeta\subset Q_k^\lambda.
\end{equation*}
Thus, we easily find that
\begin{align*}
\zeta=&\lambda+2^{d+1}(-\log r_{Q_k})^{-\alpha}\\
<&\frac{1}{m\left(Q^{\zeta,\,\lambda}_{j,\,k}\right)}\int_{Q^{\zeta,\,\lambda}_{j,\,k}}\big|f(x)\big|\intd x\\
\leq&\frac{1}{m\left(Q^{\zeta,\,\lambda}_{j,\,k}\right)}\int_{Q^{\zeta,\,\lambda}_{j,\,k}}\big|f(x)-\Avg_{Q_k^\lambda}(f)\big|\intd x+\big|\Avg_{Q_k^\lambda}(f)\big|\\
\leq&\frac{m\left(Q_k^\lambda\right)}{m\left(Q^{\zeta,\,\lambda}_{j,\,k}\right)}\big(-\log r_{Q_{k}}\big)^{-\alpha}\|f\|_{\rm L^\alpha BMO}+2^d(-\log r_{Q_k})^{-\alpha}+\lambda,
\end{align*}
from which, it follow that
\begin{equation*}
 m\left(Q^{\zeta,\,\lambda}_{j,\,k}\right)\leq 2^{-d}m\left(Q_k^\lambda\right).
\end{equation*}
Consequently,
\begin{equation}\label{eq.JN-1}
 \sum_{j}m\left(Q^{\zeta}_{j}\right)\leq 2^{-d}\sum_{k}m\left(Q_k^\lambda\right),
 \end{equation}
 and
 \begin{equation*}
 \zeta-\lambda=2^{d+1}(-\log r_{Q_k})^{-\alpha}.
\end{equation*}
We observe that $\beta>\|f\|_{\rm L^\alpha BMO}(-\log r_{Q_{0}})^{-\alpha}=(-\log r_{Q_{0}})^{-\alpha}$. If, moreover, we take $r=\big[\frac{\beta-(-\log r_{Q_{0}})^{-\alpha}}{2^{d+1}(-\log r_{2Q_k})^{-\alpha}}\big]$ and $\zeta=(-\log r_{Q_{0}})^{-\alpha}+2^{d+1}r(-\log r_{2Q_k})^{-\alpha}$, then we have $(-\log r_{Q_{0}})^{-\alpha}\leq\zeta\leq\beta$ which implies $E_\beta\subset E_\zeta$. Since
\begin{equation*}
f(x)\leq\zeta,\quad\text{for a.e}\quad x\in F^\zeta,
\end{equation*}
we get $E_\zeta=\cup_jQ_j^\zeta$. This together with \eqref{eq.JN-1} enables us to infer that
\begin{equation*}
\sum_{j}m\left(Q_j^\zeta\right)\leq 2^{-rd}\sum_{k}m\left(Q_j^1\right).
\end{equation*}
Since $E_\beta=\big\{x\in Q_0:\quad |f(x)|>\beta\big\}$, we have
\begin{equation*}
m\left(E_\beta\right)\leq m\left(E_\zeta\right)\leq \sum_{j}m\left(Q_j^\zeta\right)\leq 2^{-rd}\sum_{k}m\left(Q_j^1\right).
\end{equation*}
This entails
\begin{equation}\label{eq.JN-2}
\mu_{Q_0}(\beta)\leq 2^{-rd}m\left(Q_0\right).
\end{equation}
Taking $B=2^{d(1+2^{-d-1})}$ and $b=\log(d2^{-d-1})$ and using \eqref{eq.JN-2}, we eventually obtain that
\begin{align*}
B m\left(Q_0\right)\exp{(-b(-\log r_{Q_{0}})^{\alpha}\beta)}=&2^{d} m\left(Q_0\right)\cdot 2^{d\big(1-(-\log r_{Q_{0}})^{\alpha}\beta\big)\cdot2^{-d-1}}\\
\geq&2^{d}m\left(Q_0\right)\cdot 2^{d(r+1)}\geq\mu_{Q_0}.
\end{align*}
This completes the proof.
\end{proof}
\begin{corollary}\label{Coro.JN}
Suppose that $1\leq q<\infty$ and $\alpha\in[0,\infty[$. Then  ${\rm L^\alpha BMO}_q={\rm L^\alpha BMO}$ and
there exists a number $C>0$ such that
$$\|f\|_{\rm L^\alpha BMO}\leq \|f\|_{{\rm L^\alpha BMO}_q}\leq Cq\|f\|_{\rm L^\alpha BMO}.$$
\end{corollary}
\begin{proof}
We just need to show that ${\rm L^\alpha BMO}_q={\rm L^\alpha BMO}$ for $1<q<\infty$. For an arbitrary cube $Q_r(x)\subset\mathbb{R}^d$ with $r\in]0,1[$, by the H\"older inequality, we have
\begin{align*}
&\frac{1}{m\left(Q_r(x)\right)}\int_{Q_r(x)}\big |f(x)-\Avg_{Q_r(x)}(f)\big|\intd x\\
\leq&\left(\frac{1}{m\left(Q_r(x)\right)}\int_{Q_r(x)}\big |f(x)-\Avg_{Q_r(x)}(f)\big|^{q}\intd x\right)^{\frac{1}{q}}\nonumber\\
\leq&(-\log r)^\alpha\|f\|_{{\rm L^\alpha BMO}_q}.
\end{align*}
On the other hand, the generalized John-Nirenberg inequality ensures that for $r\in]0,1[$
\begin{align}
&\frac{(-\log r)^{\alpha q}}{m\left(B_r(x)\right)}\int_{Q}\big| f(x)-\Avg_Q(f)\big|^{q}\intd x\nonumber\\
=&\frac{q(-\log r)^{\alpha q}}{m\left(B_r(x)\right)}\int_0^\infty\xi^{q-1}\mu_{Q}(\xi)\intd\xi\nonumber\\
\leq& Bq(-\log r)^{\alpha q}\int_0^\infty\xi^{q-1}\exp{\Big(-\frac{b(-\log r)^{\alpha}\xi}{\|f\|_{\rm L^\alpha BMO}}\Big)}\intd\xi\nonumber\\
=& Bq\int_0^\infty\xi^{q-1}\exp{\Big(-\frac{b\xi}{\|f\|_{\rm L^\alpha BMO}}\Big)}\intd\xi\nonumber\\
=&Bqb^{-q}\Gamma(q)\|f\|_{\rm L^\alpha BMO}^q.
\end{align}
This implies $\|f\|_{{\rm L^\alpha BMO}_q}\leq Cq\|f\|_{\rm L^\alpha BMO}$.
\end{proof}
In the following part, we always assume that $\Theta$ is a modulus of continuity  for the convenience of presentation.
\begin{lemma}\label{convection-est}
Let $1\leq p\leq\infty$ and $u$ satisfies $\mathrm{div}\, u=0$.
Then there exist a positive constant $C$ such that
\begin{equation}
\big\|\Delta_q\big((u\cdot\nabla) v)\big\|_{L^p}\leq C\Theta(q+2)2^{q\varepsilon}\big(\|v\|_{L^p}+\|v\|_{Y^\Theta_{\rm Lip}}\big)\|u\|_{B^{-\varepsilon}_{p,\infty}},
\quad \forall\,\varepsilon\in]0,1[.
\end{equation}
\end{lemma}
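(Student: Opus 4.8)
The idea is to run Bony's paraproduct decomposition on the product $(u\cdot\nabla)v = \sum_i u_i\,\partial_i v$, estimate each of the three pieces (the two paraproducts and the remainder) separately, and in each piece trade one derivative on $v$ against the Littlewood--Paley localization so that the $Y^\Theta_{\rm Lip}$-norm of $v$ (which controls $\|\nabla S_{k+1}v\|_{L^\infty}$ up to the factor $\Theta(k)$) appears, while the negative regularity $-\varepsilon$ is absorbed by $u$. Since $\Div u=0$ one can also write $(u\cdot\nabla)v=\Div(u\otimes v)$ before localizing, which avoids losing a derivative on $v$ in the high-high remainder term; I expect to use this reformulation for the remainder part.

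\textbf{Step 1: paraproduct $\dot T_{u_i}\partial_i v$.} Writing $\Delta_q\big(\sum_i S_{k-1}u_i\,\Delta_k\partial_i v\big)$ with $|k-q|\le 4$, I bound $\|S_{k-1}u_i\|_{L^\infty}$ (or $L^p$) by summing $\|\Delta_j u\|$, $j\le k-2$; using $u\in B^{-\varepsilon}_{p,\infty}$ this sum is $\lesssim 2^{k\varepsilon}\|u\|_{B^{-\varepsilon}_{p,\infty}}$ because $\varepsilon>0$. The factor $\|\Delta_k\partial_i v\|_{L^\infty}\le 2^k\|\Delta_k v\|_{L^\infty}$ is in turn controlled by $\|\nabla S_{k+1}v\|_{L^\infty}\le \Theta(k)\|v\|_{Y^\Theta_{\rm Lip}}$, up to summing a few dyadic blocks, which is where $\Theta(q+2)$ enters (after using the $\Delta_2$-condition on $\Theta$ to replace $\Theta(k)$ by $C\Theta(q+2)$ for $|k-q|\le 4$). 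Collecting gives the claimed bound $C\Theta(q+2)2^{q\varepsilon}\|v\|_{Y^\Theta_{\rm Lip}}\|u\|_{B^{-\varepsilon}_{p,\infty}}$.

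\textbf{Step 2: paraproduct $\dot T_{\partial_i v}u_i$ and the remainder.} For $\dot T_{\partial_i v}u_i=\sum_k S_{k-1}(\partial_i v)\,\Delta_k u_i$, again only $|k-q|\le 4$ contributes; here $\|S_{k-1}\partial_i v\|_{L^\infty}\le \|\nabla S_{k-1}v\|_{L^\infty}\le \Theta(k)\|v\|_{Y^\Theta_{\rm Lip}}\lesssim \Theta(q+2)\|v\|_{Y^\Theta_{\rm Lip}}$, and $\|\Delta_k u\|_{L^p}\lesssim 2^{k\varepsilon}\|u\|_{B^{-\varepsilon}_{p,\infty}}\sim 2^{q\varepsilon}\|u\|_{B^{-\varepsilon}_{p,\infty}}$, giving the same bound. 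For the remainder $\dot R$, I use $(u\cdot\nabla)v=\Div(u\otimes v)$, so $\Delta_q\dot R$ produces a factor $2^q$ and a sum over $k\gtrsim q$ of $\|\Delta_k u\|_{L^p}\|\widetilde\Delta_k v\|_{L^\infty}$; bounding $\|\Delta_k u\|_{L^p}\lesssim 2^{-k\varepsilon}\|u\|_{B^{-\varepsilon}_{p,\infty}}$ and $\|\widetilde\Delta_k v\|_{L^\infty}\lesssim \min\{\|v\|_{L^p\text{-type}},\,2^{-k}\Theta(k)\|v\|_{Y^\Theta_{\rm Lip}}\}$ — the low-frequency piece $k\simeq 0$ handled by the $\|v\|_{L^p}$ term, the high-frequency tail by the $Y^\Theta_{\rm Lip}$ term — the series $\sum_{k\gtrsim q}2^{q}2^{-k}2^{-k\varepsilon}\Theta(k)$ converges (the geometric decay in $k$ dominates the polynomial-type growth of $\Theta$), and summing yields a bound of the form $C\Theta(q+2)2^{q\varepsilon}(\|v\|_{L^p}+\|v\|_{Y^\Theta_{\rm Lip}})\|u\|_{B^{-\varepsilon}_{p,\infty}}$, possibly after again invoking the $\Delta_2$-property of $\Theta$ to pull $\Theta(k)$ out as $\Theta(q+2)$.

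\textbf{Main obstacle.} The delicate point is the remainder term: one must ensure the loss of one derivative there is genuinely compensated, which forces the use of the divergence form and a careful split of the $k$-sum into a bounded-frequency part (controlled by $\|v\|_{L^p}$, explaining why that norm appears in the statement) and a tail (controlled by $\|v\|_{Y^\Theta_{\rm Lip}}$); and in every piece one has to verify that replacing $\Theta$ evaluated at the summation index by $\Theta(q+2)$ is legitimate, i.e. that $\sum_{j\ge 0}2^{-j\delta}\Theta(q+j)\le C_\delta\,\Theta(q+2)$ for $\delta>0$, which follows from the $\Delta_2$-condition $\Theta(2h)\le C_\Theta\Theta(h)$ by iteration. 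Everything else is routine Littlewood--Paley bookkeeping.
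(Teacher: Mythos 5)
Your plan is correct and is essentially the paper's own proof: Bony decomposition of $u_j\partial_j v_j$, the two paraproducts estimated by pairing $\|S_{k-1}u\|_{L^p}\lesssim 2^{k\varepsilon}\|u\|_{B^{-\varepsilon}_{p,\infty}}$ (resp. $\|\Delta_k u\|_{L^p}$) with $\|S_{k+2}\nabla v\|_{L^\infty}\lesssim\Theta(k)\|v\|_{Y^\Theta_{\rm Lip}}$, the remainder rewritten in divergence form via $\Div u=0$ so that the factor $2^q$ appears, the $k=-1$ block absorbed by $\|v\|_{L^p}$ through Bernstein, and the replacement of $\Theta(k)$ by $C\Theta(q+2)$ justified exactly by iterating the $\Delta_2$-condition (the paper's estimate \eqref{eq.delta-2}). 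The only blemish is the sign slip $\|\Delta_k u\|_{L^p}\lesssim 2^{-k\varepsilon}\|u\|_{B^{-\varepsilon}_{p,\infty}}$, which should read $2^{k\varepsilon}\|u\|_{B^{-\varepsilon}_{p,\infty}}$; the tail of the remainder sum then carries $2^{(q-k)(1-\varepsilon)}\Theta(k)$ and still converges because $\varepsilon<1$, as in the paper.
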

\begin{proof}
Thanks to the Bony para-product decomposition, one can write
\begin{equation*}
(u\cdot\nabla) v=T_{u_j}\partial_jv_i+T_{\partial_jv_i}u_j+\partial_j\mathcal{R}(u_j,v_i).
\end{equation*}
For the first term $T_{u_j}\partial_jv_i$,
\begin{align*}
\big\|\Delta_qT_{u_j}\partial_jv_i\big\|_{L^p}\leq& C\sum_{|k-q|\leq5}\big\|\Delta_q(S_{k-1}u_j\Delta_k\partial_jv_i)\big\|_{L^p}\nonumber\\
\leq&C\sum_{|k-q|\leq5}\|S_{k-1}u_j\|_{L^p}\|\Delta_k\partial_jv_i\|_{L^\infty}\nonumber\\
\leq&C\sum_{|k-q|\leq5}\|S_{k-1}u_j\|_{L^p}\|S_{q+5}\partial_jv_i\|_{L^\infty}\nonumber\\
\leq&C\Theta(q)2^{q\varepsilon}\|u_j\|_{B^{-\varepsilon}_{p,\infty}}\|v\|_{Y^\Theta_{\rm Lip}}.
\end{align*}
In a similar fashion as above, we have
\begin{align*}
\big\|\Delta_qT_{\partial_jv_i}u_j\big\|_{L^p}\leq& C\sum_{|k-q|\leq5}\big\|\Delta_q(S_{k-1}\partial_jv_i\Delta_ku_j)\big\|_{L^p}\nonumber\\
\leq&C\sum_{|k-q|\leq5}\|S_{k-1}\partial_jv_i\|_{L^\infty}\|\Delta_ku_j\|_{L^p}\nonumber\\
\leq&C\Theta(q)2^{q\varepsilon}\|u_j\|_{B^{-\varepsilon}_{p,\infty}}\|v\|_{Y^\Theta_{\rm Lip}}.
\end{align*}
Finally, the remainder term can be bounded as follows:
\begin{align}\label{eq.appen-1}
&\big\|\Delta_q\partial_j\mathcal{R}(u_j,v_i)\big\|_{L^p}\\
\leq&C2^{q}\|\Delta_q\mathcal{R}(u_j,v_i)\|_{L^p}\nonumber\\
\leq&C2^{q}\sum_{k\geq q-2}\|\widetilde\Delta_ku_j\Delta_kv_i\|_{L^p}\nonumber\\
\leq&C2^{q\epsilon}\sum_{k\geq q-2}2^{(q-k)(1-\epsilon)}2^{-k\epsilon}\|\widetilde\Delta_ku_j\|_{L^p}2^{k}\|\Delta_kv_i\|_{L^\infty}\nonumber\\
\leq&C2^{q\epsilon}\big(\|\Delta_{-1}v\|_{L^\infty}+\|v\|_{Y^\Theta_{\rm Lip}}\big)\sum_{k\geq q-2}2^{(q-k)(1-\epsilon)}2^{-k\epsilon}\Theta(k+2)\|\widetilde\Delta_ku_j\|_{L^p}.\nonumber
\end{align}
Since $\Theta$ is a modulus of continuity, we know that $\Theta(2h)\leq C_\Theta\cdot\Theta(h)$ for all $h\geq1$. Thus, we have
\begin{align}\label{eq.appen-2}
&\sum_{k\geq q-2}2^{(q-k)(1-\epsilon)}\Theta(k+2)2^{-k\epsilon}\|\widetilde\Delta_ku_j\|_{L^p}\nonumber\\
\leq &C\Theta(q+2)\sum_{k\geq q-2}2^{(q-k)(1-\epsilon)}\Big(1+\frac{k-q}{q}\Big)^{\log_2C_\Theta}2^{-k\epsilon}\|\widetilde\Delta_ku_j\|_{L^p}\nonumber\\
\leq&C\Theta(q+2)\|u\|_{B^{-\epsilon}_{p,\infty}}.
\end{align}
In the second line of \eqref{eq.appen-2}, we have used that for $k\geq q\leq1$,
\begin{align}\label{eq.delta-2}
\Theta(k)\leq C_\Theta\cdot\Theta\Big(q\frac{k}{2q}\Big)\leq C_\Theta^{}\Theta\Big(q\frac{k}{2^aq}\Big)\leq \Big(\frac{k}{q}\Big)^{\log_2C_\Theta}\Theta(q)
\end{align}
with $a=\log_2\frac{k}{q}$.

Inserting \eqref{eq.appen-2} into \eqref{eq.appen-1} leads to
\begin{equation*}
\big\|\Delta_q\partial_j\mathcal{R}(u_j,v_i)\big\|_{L^p}
\leq C\Theta(q+2)2^{q\epsilon}\big(\|v\|_{L^p}+\|v\|_{Y^\Theta_{\rm Lip}}\big)\|u\|_{B^{-\epsilon}_{p,\infty}}.
\end{equation*}
Collecting all these estimates yields the desired result.
\end{proof}
\begin{lemma}\label{commutator-est}
Let $\epsilon\in]-1,1[$ and $1\leq p\leq\infty$. Assume that $u$ be a divergence free vector field over $\mathbb{R}^d$. There exists a positive constant $C$ such that for all $q\geq-1$
\begin{equation}
\big\|R_q(u,v)\big\|_{L^p}\leq C\Theta(q+2)2^{-q\epsilon}\|u\|_{Y^\Theta_{\rm Lip}}\|v\|_{B^\epsilon_{p,\infty}},
\end{equation}
where $R_q(u,v):= S_{q+1}(u\cdot\nabla)\Delta_qv-\Delta_q((u\cdot\nabla) v)$.
\end{lemma}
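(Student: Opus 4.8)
The plan is to estimate the commutator $R_q(u,v)=S_{q+1}(u\cdot\nabla)\Delta_q v-\Delta_q((u\cdot\nabla)v)$ by splitting the transport structure through Bony's paraproduct decomposition $u_j\partial_j v=T_{u_j}\partial_j v+T_{\partial_j v}u_j+\partial_j\mathcal{R}(u_j,v)$ (using $\Div u=0$ to move the derivative outside the remainder), and then analysing the three resulting commutator pieces separately. First I would handle the genuine commutator term $[S_{q+1}T_{u_j},\partial_j\Delta_q]$-type object, i.e.\ the contribution of $T_{u_j}\partial_j v$: only indices $|k-q|\le 5$ survive in $\Delta_q\big((u\cdot\nabla)v\big)$, so one writes the difference $S_{q+1}(S_{k-1}u_j\,\partial_j\Delta_k v)-\Delta_q(S_{k-1}u_j\,\partial_j\Delta_k v)$ and invokes the classical commutator lemma (the convolution-kernel/first-order Taylor argument, as in \cite{BCD11}), which produces a gain of one derivative on $u$ paid against $\nabla v$; concretely $\|[S_{q+1},S_{k-1}u_j\,\partial_j]\Delta_k v\|_{L^p}\lesssim \|\nabla S_{k-1}u_j\|_{L^\infty}\,2^{-q}\|\partial_j\Delta_k v\|_{L^p}$. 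Bounding $\|\nabla S_{k-1}u\|_{L^\infty}\le \Theta(q+2)\|u\|_{Y^\Theta_{\rm Lip}}$ for $|k-q|\le5$ and $\|\partial_j\Delta_k v\|_{L^p}\lesssim 2^{k(1+\epsilon)}2^{-k\epsilon}\|\Delta_k v\|_{L^p}\lesssim 2^{q}\,2^{-q\epsilon}\|v\|_{B^\epsilon_{p,\infty}}$ gives exactly the claimed bound $\Theta(q+2)2^{-q\epsilon}\|u\|_{Y^\Theta_{\rm Lip}}\|v\|_{B^\epsilon_{p,\infty}}$ after the $2^{-q}\cdot 2^{q}$ cancellation.

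Next I would treat $T_{\partial_j v}u_j$ and $\partial_j\mathcal{R}(u_j,v)$: for these two there is no cancellation to exploit, so one simply estimates $\|S_{q+1}(u\cdot\nabla)\Delta_q v\|_{L^p}$ and $\|\Delta_q((u\cdot\nabla)v)\|_{L^p}$ through the relevant paraproduct/remainder pieces directly and bounds them term by term, reusing the arithmetic of Lemma \ref{convection-est}. For $T_{\partial_j v}u_j$ one has $\|\Delta_q(S_{k-1}\partial_j v_i\,\Delta_k u_j)\|_{L^p}\lesssim \sum_{|k-q|\le5}\|S_{k-1}\nabla v\|_{L^\infty}\|\Delta_k u\|_{L^p}\lesssim \Theta(q+2)\,2^{-q\epsilon}\|u\|_{B^\epsilon_{p,\infty}}\|v\|_{Y^\Theta_{\rm Lip}}$, which is even stronger than needed (note $B^\epsilon_{p,\infty}\hookrightarrow L^p$ is not used here since $\epsilon$ may be negative — one keeps the $B^\epsilon_{p,\infty}$ norm of $u$ throughout). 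For the remainder term $\partial_j\mathcal{R}(u_j,v)$ one sums over $k\ge q-2$ with the gain $2^{(q-k)(1-\epsilon)}$ coming from $\epsilon<1$, pulls $\Theta(k+2)$ down to $\Theta(q+2)$ using the $\Delta_2$-condition exactly as in \eqref{eq.appen-2}--\eqref{eq.delta-2}, and lands on $\Theta(q+2)2^{-q\epsilon}\|u\|_{B^\epsilon_{p,\infty}}\|v\|_{Y^\Theta_{\rm Lip}}$. Summing the three contributions and using $\|v\|_{B^\epsilon_{p,\infty}}$ throughout (and $\epsilon>-1$ to control the low-frequency block $\Delta_{-1}$) yields the stated inequality.

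I expect the main obstacle to be the careful bookkeeping in the first (true-commutator) term: one must verify that, after restricting to $|k-q|\le 5$, the object $S_{q+1}(S_{k-1}u\cdot\nabla \Delta_k v)-\Delta_q(S_{k-1}u\cdot\nabla\Delta_k v)$ genuinely has commutator structure — this is where $\Div u=0$ and the precise support conditions $\Delta_j(S_{j'-1}u\,\Delta_{j'}v)\equiv0$ for $|j-j'|\ge5$ are needed so that the two terms overlap in frequency and the kernel/Taylor argument applies — and that the derivative gain is allocated as $2^{-q}$ against $\|\nabla v\|$, not wasted. The remaining two terms are routine transcriptions of Lemma \ref{convection-est}, and the only subtlety there is keeping the negative regularity index $\epsilon$ consistent (so one never replaces $\|u\|_{B^\epsilon_{p,\infty}}$ by $\|u\|_{L^p}$, and one uses $\epsilon<1$ for summability of the remainder and $\epsilon>-1$ for the low-frequency term). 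Finally, I would remark that the divergence-free hypothesis can alternatively be used to rewrite $(u\cdot\nabla)v=\Div(u\otimes v)$, which slightly streamlines the remainder estimate; either route closes the proof.
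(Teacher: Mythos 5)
There are two genuine gaps here. First, you misidentify the cancellation in the leading term. In $R_q(u,v)=S_{q+1}(u\cdot\nabla)\Delta_q v-\Delta_q\big((u\cdot\nabla)v\big)$ the truncation $S_{q+1}$ acts on the velocity only, i.e.\ the first term is $(S_{q+1}u\cdot\nabla)\Delta_q v$; this is exactly what makes the identity $R_q=-[\Delta_q,S_{q+1}\bar u]\cdot\nabla v-[\Delta_q,S_1u]\cdot\nabla v-\Delta_q\big(({\rm I_d}-S_{q+1})(u\cdot\nabla)v\big)$ (with $\bar u=({\rm I_d}-S_1)u$) true, and what is needed in Proposition \ref{Prop-uni}, where this term is a transport term by the smoothed field. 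The object you actually estimate, $S_{q+1}(S_{k-1}u_j\partial_j\Delta_kv)-\Delta_q(S_{k-1}u_j\partial_j\Delta_kv)$, is the difference of two Fourier multipliers applied to the same product: it is not a piece of $R_q$, it is not equal to the commutator $[S_{q+1},S_{k-1}u_j\partial_j]\Delta_kv$ you then invoke, and, being of the form $(S_{q+1}-\Delta_q)g$, it carries no $2^{-q}\|\nabla S_{k-1}u\|_{L^\infty}$ gain whatsoever. The derivative gain must come from commuting the block $\Delta_q$ with multiplication by the low-frequency velocity, i.e.\ from $[\Delta_q,S_{q+1}T_{\bar u_i}]\partial_i v$ as in the paper's $R^1_q$ (kernel representation of the block plus a first-order Taylor expansion of $S_{q'-1}S_{q+1}\bar u$), together with the correction terms $R^2_q,\dots,R^5_q$ that arise from Bony-decomposing $S_{q+1}\bar u_i\,\partial_i\Delta_q v$ itself and which are entirely absent from your decomposition.

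Second, your bounds for the two non-commutator pieces land on the wrong right-hand side: you estimate $T_{\partial_jv}u_j$ and the remainder by $\Theta(q+2)2^{-q\epsilon}\|u\|_{B^{\epsilon}_{p,\infty}}\|v\|_{Y^\Theta_{\rm Lip}}$, with the Besov norm on $u$ and the Lipschitz-type norm on $v$. This is not ``stronger than needed''; it is a different quantity, and neither bound implies the other ($Y^\Theta_{\rm Lip}$ is $L^\infty$-based while $B^{\epsilon}_{p,\infty}$ is $L^p$-based). The lemma as stated — and its use in the uniqueness argument, where $v=\delta u$ is controlled only in $B^{-\varepsilon}_{\infty,\infty}$ — forces the $L^\infty$/Bernstein trade onto the $u$-blocks, e.g.\ $\|\Delta_{q'}\bar u\|_{L^\infty}\leq C2^{-q'}\|\Delta_{q'}\nabla u\|_{L^\infty}\leq C2^{-q'}\Theta(q'+2)\|u\|_{Y^\Theta_{\rm Lip}}$, while the $L^p$ norm stays on the $v$-blocks, as in the paper's treatment of $R^2_q$ and $R^6_q$--$R^8_q$; your version also invokes $\|S_{k-1}\nabla v\|_{L^\infty}$, which is not controlled by $\|v\|_{B^{\epsilon}_{p,\infty}}$. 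Finally, the high-frequency contribution $\Delta_q\big(({\rm I_d}-S_{q+1})(u\cdot\nabla)v\big)$ is never isolated in your splitting, so even the bookkeeping of terms is incomplete. As written the argument therefore does not prove the stated inequality.
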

\begin{proof}
We first decompose $R_q(u,v)$ as follows:
\begin{align}
R_q(u,v)=&S_{q+1}(u\cdot\nabla)\Delta_qv-\Delta_q(S_{q+1}(u\cdot\nabla) v)-\Delta_q\big(({\rm I_d}-S_{q+1})(u\cdot\nabla) v\big)\nonumber\\
=&-[\Delta_q,S_{q+1}\bar u]\cdot\nabla v-[\Delta_q,S_1u]\cdot\nabla v-\Delta_q\big(({\rm I_d}-S_{q+1})(u\cdot\nabla) v\big),
\end{align}
where $\bar u=({\rm I_d}-S_1)u$.

Note that
\begin{align*}
[\Delta_q,S_{q+1}\bar u]\cdot\nabla v=&[\Delta_q,S_{q+1}T_{\bar u_i}]\partial_i v+\Delta_q\big(T_{\partial_iv}S_{q+1}\bar u_i\big)+
\Delta_q\big(R(S_{q+1}\bar u_i,\partial_i v)\big)\nonumber\\
&-T_{\Delta_q\partial_iv}S_{q+1}\bar u_i-R(S_{q+1}\bar u_i,\Delta_q\partial_i v)\nonumber\\
:=&R^{1}_q(u,v)+R^{2}_q(u,v)+R^{3}_q(u,v)+R^{4}_q(u,v)+R^{5}_q(u,v)
\end{align*}
and
\begin{align*}
&\Delta_q\big(({\rm I_d}-S_{q+1})(u\cdot\nabla) v\big)\\
=&\Delta_q\big(T_{({\rm I_d}-S_{q+1})u_i}\partial_iv\big)+\Delta_q\big(T_{\partial_iv}({\rm I_d}-S_{q+1})u_i\big)+
\Delta_qR\big(({\rm I_d}-S_{q+1})u_i,\partial_iv\big)\nonumber\\
:=&R^{6}_q(u,v)+R^{7}_q(u,v)+R^{8}_q(u,v).
\end{align*}
First of all, we observe that
\begin{align*}
&[S_{q'-1}S_{q+1}\bar u_i,\Delta_q]\partial_i\Delta_{q'}v\nonumber\\
=&2^{q'd}\int_{\mathbb{R}^d}\big(S_{q'-1}S_{q+1}\bar u_i(x)-S_{q'-1}S_{q+1}\bar u_i(x-y)\big)\varphi\big(2^{q'}(x-y)\big)\partial_i\Delta_{q'}v(y)\intd y\nonumber\\
=&-2^{q'd}\int_{\mathbb{R}^d}\int_0^1\partial_kS_{q'-1}S_{q+1}\bar u_i\big(x+(1-\tau)(x-y)\big)\intd\tau\\
&\qquad\qquad\quad  \times (x_k-y_k)\varphi\big(2^{q'}(x-y)\big)\partial_i\Delta_{q'}v(y)\intd y\nonumber\\
=&-2^{q'(d-1)}\int_{\mathbb{R}^d}\int_0^1\partial_kS_{q'-1}S_{q+1}\bar u_i\big(x+(1-\tau)(x-y)\big)\intd\tau\\
&\qquad\qquad\quad \times 2^{q'}(x_k-y_k)\varphi\big(2^{q'}(x-y)\big)\partial_i\Delta_{q'}v(y)\intd y,
\end{align*}
where used the relation $\Delta_{q'}f=2^{q'd}\int_{\mathbb{R}^d}\varphi\big(2^{q'}(x-y)\big)f(y)\intd y$.

Therefore, we immediately get that
\begin{align*}
\|R^{1}_q(u,v)\|_{L^p}\leq&C \sum_{|q'-q|\leq4}2^{-q'}\|\partial_k S_{q'-1}\bar u_i\|_{L^\infty}
\|\partial_i\Delta_{q'}v\|_{L^p}\int_{\mathbb{R}^d}|x\varphi(x)|\intd x\nonumber\\
\leq&C \sum_{|q'-q|\leq4}\|\partial_k S_{q'-1}u_i\|_{L^\infty}\|\Delta_{q'}v\|_{L^p}\nonumber\\
\leq&C\Theta(q+2) \|u\|_{Y^\Theta_{\rm Lip}}\sum_{|q'-q|\leq4}\|\Delta_{q'}v\|_{L^p}.
\end{align*}
In a similar fashion as in proof of $R^1_q(u,v)$, we can bounded $[\Delta_q,S_1u]\cdot\nabla v$ as follows:
\begin{align*}
\big\|[\Delta_q,S_1u]\cdot\nabla v\big\|_{L^p}\leq& C\sum_{|q-q'|\leq2}\|\nabla S_1 u\|_{L^\infty}\|\Delta_{q'}v\|_{L^p}\nonumber\\
\leq&C \|u\|_{Y^{\Theta}_{\rm Lip}}\sum_{|q-q'|\leq2}\|\Delta_{q'}v\|_{L^p}.
\end{align*}
For the second term $R_q^2(u,v)$, by the same way as in proving Lemma \eqref{convection-est}, we infer that
\begin{align*}
\big\|R_q^2(u,v)\big\|_{L^p}\leq&C\sum_{|q'-q|\leq4}\|\Delta_{q'}\bar u_i\|_{L^\infty}\big\|S_{q'-1}\partial_iv\big\|_{L^p}\nonumber\\
\leq&C\sum_{|q'-q|\leq4}2^{q-q'}\|\Delta_{q'}\nabla u_i\|_{L^\infty}\sum_{-1\leq k\leq q'-2}2^{k-q}\big\|\Delta_{k}v\big\|_{L^p}\nonumber\\
\leq&C\Theta(q+2) \sum_{|q'-q|\leq4}\frac{\Theta(q'+2)}{\Theta(q+2)}2^{q-q'}\frac{1}{\Theta(q'+2)}\|\Delta_{q'}\nabla u_i\|_{L^\infty}\\
&\times\sum_{-1\leq k\leq q+2}2^{k-q}\big\|\Delta_{k}v\big\|_{L^p}\nonumber\\
\leq&C\Theta(q+2) \|u\|_{Y^\Theta_{\rm Lip}}\sum_{-1\leq k\leq q+2}2^{k-q}\big\|\Delta_{k}v\big\|_{L^p}.
\end{align*}
Similarly, we can conclude that
\begin{equation*}
\left\|R_q^4(u,v)\right\|_{L^p}\leq C\Theta(q+2) \|u\|_{Y^\Theta_{\rm Lip}}\sum_{-1\leq k\leq q+2}2^{k-q}\big\|\Delta_{k}v\big\|_{L^p}.
\end{equation*}
Since $\epsilon\in]0,1[$, the reminder term $R_q^3(u,v)$ can be bounded by
\begin{align*}
\|\partial_i\Delta_q\big(R(S_{q+1}\bar u_i, v)\big)\|_{L^p}
\leq& C\sum_{q'\geq q-3}2^{q}\|\Delta_{q'}v\|_{L^p}\|\widetilde\Delta_{q'}S_{q+1} \bar u_i\|_{L^\infty}\nonumber\\
\leq& C\sum_{q'\geq q-3}2^{q-q'}\|\Delta_{q'}v\|_{L^p}\|S_{q+1} \nabla \bar u_i\|_{L^\infty}\nonumber\\
\leq&C\Theta(q+2) 2^{-q\epsilon}\|u\|_{Y^\Theta_{\rm Lip}}\|v\|_{B^\epsilon_{p,\infty}}.
\end{align*}
Similarly, we can conclude that
\begin{equation*}
\|R_q^5(u,v)\|_{L^p}
\leq C\Theta(q+2) 2^{-q\epsilon}\|u\|_{Y^\Theta_{\rm Lip}}\|v\|_{B^\epsilon_{p,\infty}}.
\end{equation*}
It remains  to bound the last three terms $R_q^6(u,v),\,R_q^7(u,v)$ and $R_q^8(u,v)$. Thanks to the  support property
and the H\"older inequality, one has
\begin{align*}
\|R_q^6(u,v)\|_{L^p}
\leq &C\sum_{|q'-q|\leq5}\|S_{q'-1}({\rm I_d}-S_{q+1})u_i\|_{L^\infty}\|\Delta_{q'}\partial_iv\|_{L^p}\nonumber\\
\leq&C\sum_{|q'-q|\leq5}2^{-q'}\|S_{q'-1}({\rm I_d}-S_{q+1})\nabla u_i\|_{L^\infty}\|\Delta_{q'}\partial_iv\|_{L^p}\nonumber\\
\leq&C\sum_{|q'-q|\leq5}2^{q-q'}\|S_{q'-1}\nabla u_i\|_{L^\infty}\|\Delta_{q'}v\|_{L^p}\nonumber\\
\leq&C\Theta(q+2)\|u\|_{Y^\Theta_{\rm Lip}}\sum_{|q'-q|\leq5}2^{q-q'}\|\Delta_{q'}v\|_{L^p}.
\end{align*}
For the term $R_q^7(u,v)$, by the H\"older inequality, we obtain
\begin{align*}
\|R_q^7(u,v)\|_{L^p}
\leq &C\sum_{|q'-q|\leq5}\|S_{q'-1}\partial_iv\|_{L^p}\|\Delta_{q'}({\rm I_d}-S_{q+1})u_i\|_{L^\infty}\nonumber\\
\leq&C\sum_{|q'-q|\leq5}2^{q'-q}\,\sum_{-1\leq k\leq q'-2}2^{k-q'}\|\Delta_{k}v\|_{L^p}\|\Delta_{q'}\nabla u_i\|_{L^\infty}\nonumber\\
\leq&C\Theta(q+2) \|u\|_{Y^\Theta_{\rm Lip}}\sum_{-1\leq k\leq q+3}2^{k-q}\|\Delta_{k}v\|_{L^p}.
\end{align*}
As for the last term $R_q^8(u,v)$, by the H\"older inequality and \eqref{eq.delta-2}, we obtain
\begin{align*}
\|R_q^8(u,v)\|_{L^p}
\leq&C\sum_{q'\geq q-3}2^{q}\|\Delta_{q'}v\|_{L^p}\|\widetilde\Delta_{q'}({\rm I_d}-S_{q+1})u_i\|_{L^\infty}\nonumber\\
\leq&C\sum_{q'\geq q-3}2^{q}\|\Delta_{q'}v\|_{L^p}\|\widetilde{\dot{\Delta}}_{q'} u_i\|_{L^\infty}\nonumber\\
\leq&C\sum_{q'\geq q-3}2^{q-q'}\|\Delta_{q'}v\|_{L^p}\|\widetilde{\dot{\Delta}}_{q'}\nabla u_i\|_{L^\infty}\nonumber\\
\leq&C\Theta(q+2) 2^{-q\epsilon}\|u\|_{Y^\Theta_{\rm Lip}}\|v\|_{B^\epsilon_{p,\infty}}.
\end{align*}
Combining all these bounds yields the desired result.
\end{proof}
\begin{lemma}\label{pressure-estimate}
Let $B(u,v)=\nabla\Div|D|^{-2}((u\cdot\nabla) v)$ and $\varepsilon\in]0,1[$.
Then,  for $q\geq-1$,
\begin{equation}
\|\Delta_qB(u,v)\|_{L^\infty}\leq C\Theta(q+2)2^{q\varepsilon}\min\Big\{\|u\|_{B^{-\varepsilon}_{\infty,\infty}}
\|v\|_{\overline{Y^{\Theta}_{\rm Lip}}},\,\|v\|_{B^{-\varepsilon}_{\infty,\infty}}\|u\|_{ \overline{Y^{\Theta}_{\rm Lip}}}\Big\}.
\end{equation}
\end{lemma}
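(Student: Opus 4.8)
The plan is to expand $B(u,v) = \nabla\Div|D|^{-2}((u\cdot\nabla)v)$ via Bony's paraproduct decomposition exactly as in the proof of Lemma~\ref{convection-est}, and to reuse most of those bounds. Write $(u\cdot\nabla)v = T_{u_j}\partial_j v_i + T_{\partial_j v_i}u_j + \partial_j\mathcal{R}(u_j,v_i)$, so that $B(u,v) = \mathcal{R}_0\big((u\cdot\nabla)v\big)$ where $\mathcal{R}_0 := \nabla\Div|D|^{-2}$ is a matrix of Calder\'on--Zygmund operators of order $0$; in particular $\|\Delta_q\mathcal{R}_0 g\|_{L^\infty}\leq C\|\Delta_q g\|_{L^\infty}$ and, because $\mathcal{R}_0$ commutes with the low-/high-frequency structure, applying $\Delta_q$ after $\mathcal{R}_0$ costs nothing beyond a harmless enlargement of the summation ranges. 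Thus the three terms $\|\Delta_q B(u,v)\|_{L^\infty}$ obey verbatim the bounds already established in Lemma~\ref{convection-est}, giving
\begin{equation*}
\|\Delta_q B(u,v)\|_{L^\infty}\leq C\Theta(q+2)2^{q\varepsilon}\big(\|v\|_{L^\infty}+\|v\|_{Y^\Theta_{\rm Lip}}\big)\|u\|_{B^{-\varepsilon}_{\infty,\infty}}\leq C\Theta(q+2)2^{q\varepsilon}\|v\|_{\overline{Y^\Theta_{\rm Lip}}}\|u\|_{B^{-\varepsilon}_{\infty,\infty}},
\end{equation*}
which is one of the two terms in the claimed minimum.

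For the other term I would exploit the divergence-free condition on $u$ to rewrite the transport term in divergence form: since $\Div u=0$, one has $(u\cdot\nabla)v = \Div(u\otimes v)$, i.e.\ $(u\cdot\nabla)v_i = \partial_j(u_j v_i)$. Hence $B(u,v) = \nabla\Div|D|^{-2}\,\Div(u\otimes v)$, which is a CZ operator of order $0$ applied to $u\otimes v$ after one derivative has been absorbed; more precisely $\Delta_q B(u,v)$ is, up to the bounded operator $\mathcal{R}_0$, a linear combination of $2^{q}\Delta_q(u_j v_i)$-type expressions with the derivative landing outside. Now decompose $u_j v_i = T_{u_j}v_i + T_{v_i}u_j + \mathcal{R}(u_j,v_i)$. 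The roles of $u$ and $v$ are now symmetric in the product $u\otimes v$, so running the \emph{same} paraproduct/remainder estimates as in Lemma~\ref{convection-est} but keeping $v$ in the low-frequency (smooth, $Y^\Theta_{\rm Lip}$) slot in the paraproducts $T_{v_i}u_j$, and estimating $T_{u_j}v_i$ and $\mathcal{R}(u_j,v_i)$ with $u$ placed in $\overline{Y^\Theta_{\rm Lip}}$ and $v$ in $B^{-\varepsilon}_{\infty,\infty}$, yields
\begin{equation*}
\|\Delta_q B(u,v)\|_{L^\infty}\leq C\Theta(q+2)2^{q\varepsilon}\|v\|_{B^{-\varepsilon}_{\infty,\infty}}\|u\|_{\overline{Y^\Theta_{\rm Lip}}}.
\end{equation*}
Taking the better of the two bounds gives the $\min$ in the statement.

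The main obstacle will be bookkeeping the low-frequency ($q=-1$ or $q=0$) contributions and the slight mismatch between the $Y^\Theta_{\rm Lip}$ norm (which controls $\|\nabla S_{p+1}v\|_{L^\infty}$, not $\|v\|_{L^\infty}$ directly) and what appears after the zeroth-order operator $\mathcal{R}_0$ acts — this is precisely why the $\overline{Y^\Theta_{\rm Lip}} = L^\infty \cap Y^\Theta_{\rm Lip}$ norm, rather than $Y^\Theta_{\rm Lip}$ alone, appears on the right-hand side. In the remainder term $\mathcal{R}(u_j,v_i)$ one must again use $\Theta(2h)\leq C_\Theta\Theta(h)$ together with the subpolynomial-growth bound \eqref{eq.delta-2}, $\Theta(k)\leq (k/q)^{\log_2 C_\Theta}\Theta(q)$, to sum the tail $\sum_{k\geq q-2}2^{(q-k)(1-\varepsilon)}\Theta(k+2)2^{-k\varepsilon}\|\widetilde\Delta_k(\cdot)\|_{L^\infty}$ against $C\Theta(q+2)\|\cdot\|_{B^{-\varepsilon}_{\infty,\infty}}$; this is the one genuinely nontrivial summation, but it is identical in form to the one already carried out in \eqref{eq.appen-2}, so no new ideas are needed. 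The Calder\'on--Zygmund boundedness on $L^\infty$ is to be understood, as usual, modulo the standard frequency-localization (the operator acts on dyadic blocks, where it is a Fourier multiplier with smooth compactly supported symbol, hence convolution with an $L^1$ kernel), so no subtlety of $L^\infty$-unboundedness of CZ operators intervenes.
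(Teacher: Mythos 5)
Your overall strategy (Bony decomposition under the zeroth-order operator, reuse of the convection-term machinery, and a symmetrization to produce the minimum) points in the right direction, but there are two genuine gaps. First, the closing claim that no $L^\infty$-unboundedness issue arises "because on dyadic blocks the symbol is smooth and compactly supported" fails precisely at the block $q=-1$ acting on the Bony remainder $\mathcal{R}(u_j,v_i)$: its spectrum contains the origin, and the symbol $\chi(\xi)\,\xi\,\xi_i\xi_j/|\xi|^2$ is not smooth at $\xi=0$, so $\Delta_{-1}\nabla\Div|D|^{-2}$ is not covered by the uniform multiplier argument. This is exactly why the paper's proof splits the remainder into $({\rm I_d}-\Delta_{-1})\mathcal{R}(u_j,v_i)$ (term $B_3$) and the low-frequency piece $\Delta_{-1}\mathcal{R}(u_j,v_i)$, which it treats through the explicit kernel $E_d$ cut into a compactly supported part $\theta E_d\in L^1$ and a far part with $\nabla\partial_i\partial_j\widetilde E_d\in L^1$ (terms $B_4$, $B_5$). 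Without an argument of this kind, even the first half of your minimum is not proved for $q=-1$.

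Second, and more seriously, your route to the other half of the minimum does not close. Writing $(u\cdot\nabla)v=\Div(u\otimes v)$, pulling both derivatives outside and running Bony on $u\otimes v$, the paraproduct $T_{u_j}v_i$ produces terms of size $2^{q}\|S_{k-1}u_j\|_{L^\infty}\|\Delta_k v_i\|_{L^\infty}\lesssim 2^{q(1+\varepsilon)}\|u\|_{L^\infty}\|v\|_{B^{-\varepsilon}_{\infty,\infty}}$ for $k\sim q$, which exceeds the target $\Theta(q+2)2^{q\varepsilon}$ by an unabsorbed factor of order $2^{q}$: neither $\|u\|_{L^\infty}$ nor $\|u\|_{Y^\Theta_{\rm Lip}}$ (which controls $\nabla S_{k}u$, not $S_{k}u$ with a gain of $2^{-k}$) can soak up an exterior derivative sitting against the low-frequency slot. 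The paper avoids this by using $\Div u=\Div v=0$ to write $\Div\big((u\cdot\nabla)v\big)=\partial_iu_j\,\partial_jv_i$, i.e.\ exactly one derivative on each factor; then in the paraproducts $T_{\partial_iu_j}\partial_jv_i$ and $T_{\partial_jv_i}\partial_iu_j$ both the low- and high-frequency entries are gradients, each of which can be measured either as $\Theta(k+2)\|\cdot\|_{Y^\Theta_{\rm Lip}}$ or as $2^{k(1+\varepsilon)}\|\cdot\|_{B^{-\varepsilon}_{\infty,\infty}}$, and the two assignments in \eqref{eq-l-p-1} and \eqref{eq-l-p-2} are what produce the minimum; the two derivatives are pulled entirely outside only on the remainder, where the summation over $k\geq q-2$ absorbs them. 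You should replace your double-divergence step by this "one derivative on each factor" decomposition (and add the low-frequency kernel argument) to obtain the stated estimate.
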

\begin{proof}
Thanks to the Bony-paraproduct decomposition, one  decompose  $\nabla\Pi=B(u,u)$ as follows:
$$
B(u,v):=B_1(u,v)+B_2(u,v)+B_3(u,v)+B_4(u,v)+B_5(u,v).
$$
Denoting by $\theta\in \mathcal{D}(B(0,2))$  a smooth function with value $1$ on the ball $B(0,1)$, we have
\begin{align*}
&B_1(u,v):=\nabla(-\Delta)^{-1}T_{\partial_iu_j}\partial_jv_i,\\
&B_2(u,v):=\nabla(-\Delta)^{-1}T_{\partial_jv_i}\partial_iu_j,\\
&B_3(u,v):=\partial_i\partial_j\nabla(-\Delta)^{-1}({\rm I_d}-\Delta_{-1})\mathcal{R}(u_j,v_i),\\
&B_4(u,v):=\theta E_d\ast\nabla\partial_i\partial_j\Delta_{-1}\mathcal{R}(u_j,v_i), \quad\text{with}\quad E_d=c_d\frac{1}{|x-y|^{d-2}},\\
&B_5(u,v):=\widetilde E_d\ast\nabla\partial_i\partial_j\Delta_{-1}\mathcal{R}(u_j,v_i)\quad\text{with}\quad \widetilde E_d:=(1-\theta)E_d.
\end{align*}
First, we tackle with the para-product terms. By using the H\"older inequality, we obtain
\begin{align}\label{eq-l-p}
\|\Delta_qB_1(u,v)\|_{L^\infty}\leq& \big\|\Delta_q\nabla(-\Delta)^{-1}\big(T_{\partial_iu_j}\partial_jv_i\big)\big\|_{L^\infty}\nonumber\\
\leq& C\sum_{|k-q|\leq 5}\big\|\Delta_q\nabla(-\Delta)^{-1}\big(S_{k-1}\partial_iu_j\Delta_k\partial_jv_i\big)\big\|_{L^\infty}\nonumber\\
\leq& C\sum_{|k-q|\leq 5}2^{-q}\big\|S_{k-1}\partial_iu_j\big\|_{L^\infty}\big\|\Delta_k\partial_jv_i\big\|_{L^\infty}.
\end{align}
Furthermore, the fact
\begin{equation*}
\big\|\Delta_k\partial_jv_i\big\|_{L^\infty}=\big\|S_{k+2}\Delta_k\partial_jv_i\big\|_{L^\infty}
\leq C\big\|S_{k+2}\partial_jv_i\big\|_{L^\infty}\leq C\Theta(k+2)\|v\|_{Y^\Theta_{\rm Lip}}
\end{equation*}
yields
\begin{align}\label{eq-l-p-1}
&\sum_{|k-q|\leq 5}2^{-q}\big\|S_{k-1}\partial_iu_j\big\|_{L^\infty}\big\|\Delta_k\partial_jv_i\big\|_{L^\infty}\\
\leq&C\Theta(q+2)2^{q\epsilon}\sum_{|k-q|\leq 5}2^{-(q-k)(1+\epsilon)}\tfrac{\Theta(k+2)}{\Theta(q+2)}2^{-k\epsilon}\big\|S_{k-1}u_j\big\|_{L^\infty}
\tfrac{1}{\Theta(k+2)}\big\|\Delta_k\partial_jv_i\big\|_{L^\infty}\nonumber\\
\leq&C\Theta(q+2)2^{q\varepsilon}\|v\|_{Y^\Theta_{\rm Lip}}\|u\|_{B^{-\varepsilon}_{\infty,\infty}}.\nonumber
\end{align}
On the other hand, we see that
\begin{align}\label{eq-l-p-2}
&\sum_{|k-q|\leq 5}2^{-q}\big\|S_{k-1}\partial_iu_j\big\|_{L^\infty}\big\|\Delta_k\partial_jv_i\big\|_{L^\infty}\\
\leq&C2^{q\epsilon}\sum_{|k-q|\leq 5}2^{-(q-k)(1+\epsilon)}\big\|S_{k-1}\partial_iu_j\big\|_{L^\infty}2^{-k\epsilon}\big\|\Delta_kv_i\big\|_{L^\infty}\nonumber\\
\leq&C\Theta(q+2)2^{q\varepsilon}\|u\|_{Y^\Theta_{\rm Lip}}\|v\|_{B^{-\varepsilon}_{\infty,\infty}}.\nonumber
\end{align}
Inserting \eqref{eq-l-p-1} and \eqref{eq-l-p-2} in \eqref{eq-l-p}, one obtains
\begin{align*}
\|\Delta_qB_1(u,v)\|_{L^\infty}\leq C\Theta(q+2)2^{q\varepsilon} \min\Big\{\|u\|_{B^{-\varepsilon}_{\infty,\infty}}
\|v\|_{Y^\Theta_{\rm Lip}},\,\|v\|_{B^{-\varepsilon}_{\infty,\infty}}\|u\|_{Y^\Theta_{\rm Lip}}\Big\}.
\end{align*}
Similarly, we have
\begin{align*}
\|\Delta_qB_2(u,v)\|_{L^\infty}\leq& \big\|\Delta_q\nabla(-\Delta)^{-1}\big(T_{\partial_jv_i}\partial_iu_j\big)\big\|_{L^\infty}\nonumber\\
\leq& C\sum_{|k-q|\leq5}\big\|\Delta_q\nabla(-\Delta)^{-1}\big(S_{k-1}\partial_jv_i\Delta_k\partial_iu_j\big)\big\|_{L^\infty}\nonumber\\
\leq& C\sum_{|k-q|\leq5}2^{-q}\big\|S_{k-1}\partial_jv_i\big\|_{L^\infty}\big\|\Delta_k\partial_iu_j\big\|_{L^\infty}\nonumber\\
\leq&C\Theta(q+2)2^{q\varepsilon} \min\Big\{\|u\|_{B^{-\varepsilon}_{\infty,\infty}}
\|v\|_{Y^\Theta_{\rm Lip}},\,\|v\|_{B^{-\varepsilon}_{\infty,\infty}}\|u\|_{Y^\Theta_{\rm Lip}}\Big\}.
\end{align*}
The remainder term $B_3(u,v)$ can be bounded by
\begin{align*}
&\|\Delta_q\partial_i\partial_j\nabla(-\Delta)^{-1}({\rm I_d}-\Delta_{-1})\mathcal{R}(u_j,v_i)\|_{L^\infty}\nonumber\\
\leq&\|\widetilde{\dot{\Delta}}_q\partial_i\partial_j\nabla(-\Delta)^{-1}\mathcal{R}(u_j,v_i)\|_{L^\infty}\nonumber\\
\leq&C\sum_{k\geq q-2}2^{q}\|\Delta_ku_j\|_{L^\infty}\|\widetilde\Delta_kv_i\|_{L^\infty}\nonumber\\
\leq&C2^{q\epsilon}\sum_{k\geq q-2}2^{(q-k)(1-\epsilon)}2^{k}2^{-k\epsilon}\|\Delta_ku_j\|_{L^\infty}\|\widetilde\Delta_kv_i\|_{L^\infty}\nonumber\\
\leq&C\Theta(q+2)2^{q\epsilon}\sum_{ k\geq q-2}\frac{\Theta(k+2)}{\Theta(q+2)}2^{(q-k)(1-\epsilon)} \min\Big\{\|u\|_{B^{-\varepsilon}_{\infty,\infty}}\|v\|_{ \overline{Y^\Theta_{\rm Lip}}},\,\|v\|_{B^{-\varepsilon}_{\infty,\infty}}\|u\|_{\overline{Y^\Theta_{\rm Lip}}}\Big\}.
\end{align*}
By using \eqref{eq.delta-2}, we can deduce that
\begin{align*}
\|\Delta_qB_3(u,v)\|_{L^\infty}\leq C\Theta(q+2)2^{q\varepsilon} \min\Big\{\|u\|_{B^{-\varepsilon}_{\infty,\infty}}
\|v\|_{Y^\Theta_{\rm Lip}},\,\|v\|_{B^{-\varepsilon}_{\infty,\infty}}\|u\|_{Y^\Theta_{\rm Lip}}\Big\}.
\end{align*}
Since $\theta E_d\in L^1(\RR^d)$,  we get
\begin{align*}
\|\Delta_qB_4(u,v)\|_{L^\infty}\leq&\big\|\Delta_q\big(\theta E_d\ast\nabla\partial_i\partial_j\Delta_{-1}\mathcal{R}(u_j,v_i)\big)\big\|_{L^\infty}\nonumber\\
\leq& C\|\theta E_d\|_{L^1}\|\nabla\partial_i\partial_j\Delta_{-1}\mathcal{R}(u_j,v_i)\|_{L^\infty}\nonumber\\
\leq& C\|\theta E_d\|_{L^1}\sum_{k\geq-1}\|\Delta_ku_j\|_{L^\infty}\|\Delta_k v_i\|_{L^\infty}\nonumber\\
\leq& C\|\theta E_d\|_{L^1}\sum_{k\geq-1}2^{-k(1-\epsilon)}2^{k}2^{-k\epsilon}\|\Delta_ku_j\|_{L^\infty}\|\Delta_k v_i\|_{L^\infty}\nonumber\\
\leq&C\Theta(q+2)2^{q\varepsilon} \min\Big\{\|u\|_{B^{-\varepsilon}_{\infty,\infty}}
\|v\|_{Y^\Theta_{\rm Lip}},\,\|v\|_{B^{-\varepsilon}_{\infty,\infty}}\|u\|_{Y^\Theta_{\rm Lip}}\Big\}.
\end{align*}
Finally, the fact that $\nabla\partial_i\partial_j\widetilde E_d\in L^1$ enables us to conclude that
\begin{align*}
\|\Delta_qB_5(u,v)\|_{L^\infty}\leq &\big\|\Delta_q\big(\nabla\partial_i\partial_j\widetilde E_d\ast\Delta_{-1}\mathcal{R}(u_j,v_i)\big)\big\|_{L^\infty}\nonumber\\
\leq& C\|\nabla\partial_i\partial_j\widetilde E_d\|_{L^1}\sum_{k\geq-1}\|\Delta_ku_j\|_{L^\infty}\|\Delta_k v_i\|_{L^\infty}\nonumber\\
\leq& C\|\nabla\partial_i\partial_j\widetilde E_d\|_{L^1}\sum_{k\geq-1}2^{-k(1-\varepsilon)}2^{k}2^{-k\varepsilon}\|\Delta_ku_j\|_{L^\infty}\|\Delta_k v_i\|_{L^\infty}\nonumber\\
\leq&C\Theta(q+2)2^{q\varepsilon} \min\Big\{\|u\|_{B^{-\varepsilon}_{\infty,\infty}}
\|v\|_{Y^\Theta_{\rm Lip}},\,\|v\|_{B^{-\varepsilon}_{\infty,\infty}}\|u\|_{Y^\Theta_{\rm Lip}}\Big\}.
\end{align*}
This ends the proof.
\end{proof}

\section*{Acknowledgments}

 The authors thank the  referees and the associated editor for their
invaluable comments  which helped improve the paper
greatly.  This work is supported in part by the National Natural Science Foundation of China
 under grant  No.11671045, No.11671047,  No.11501020 and No.11831004
 .


\end{document}